\newtheorem{theorem}{Theorem}[section]
\newtheorem{lemma}[theorem]{Lemma}
\newtheorem{proposition}[theorem]{Proposition}
\newtheorem{corollary}[theorem]{Corollary}
\theoremstyle{definition}
\newtheorem{definition}[theorem]{Definition}
\newtheorem{example}[theorem]{Example}
\newtheorem{remark}[theorem]{Remark}
\definecolor{A}{rgb}{.75,1,.75}
\numberwithin{equation}{section}
\newcommand{\ad}{\operatorname{ad}}
\newcommand{\C}{ \mathbb C }
\newcommand{\Di}{D_i^{(r)}}
\newcommand{\Ei}{E_i^{(r)}}
\newcommand{\Fi}{F_i^{(r)}}
\newcommand{\Daij}{D_{a;i,j}^{(r)}}
\newcommand{\Dpaij}{D_{a;i,j}^{\prime(r)}}
\newcommand{\Eaij}{E_{a;i,j}^{(r)}}
\newcommand{\Faij}{F_{a;i,j}^{(r)}}
\newcommand{\del}{\delta}
\newcommand{\End}{\operatorname{End}}
\newcommand{\W}{\mathcal{W}}
\newcommand{\ev}{\operatorname{ev}}
\newcommand{\gl}{\mathfrak{gl}}
\newcommand{\g}{\mathfrak{g}}
\newcommand{\gr}{\operatorname{gr}}
\newcommand{\glMN}{\mathfrak{gl}_{M|N}}
\newcommand{\id}{\operatorname{id}}
\newcommand{\la}{\lambda}
\newcommand{\pa}{\overline}
\newcommand{\tp}{\operatorname{pa}}
\newcommand{\tpa}{\check{\operatorname{pa}}}
\newcommand{\pr}{\operatorname{pr}}
\newcommand{\col}{\text{col}}
\newcommand{\row}{\text{row}}
\newdimen\Hoogte    \Hoogte=12pt
\newdimen\Breedte   \Breedte=12pt
\newdimen\Dikte     \Dikte=0.5pt
\newenvironment{Young}{\begingroup
       \def\vr{\vrule height0.8\Hoogte width\Dikte depth 0.4\Hoogte}
       \def\fbox##1{\vbox{\offinterlineskip
                    \hrule height\Dikte
                    \hbox to \Breedte{\vr\hfill##1\hfill\vr}
                    \hrule height\Dikte}}
       \vbox\bgroup \offinterlineskip \tabskip=-\Dikte \lineskip=-\Dikte
            \halign\bgroup &\fbox{##\unskip}\unskip  \crcr }
       {\egroup\egroup\endgroup}
\def\Diagram#1{\relax\ifmmode\vcenter{\,\begin{Young}#1\end{Young}\,}\else%
              $\vcenter{\,\begin{Young}#1\end{Young}\,}$\fi}
\begin{document}
\title[On shifted super Yangians and a class of finite $W$-superalgebras] {On shifted super Yangians and a class of finite $W$-superalgebras}

\author[Yung-Ning Peng]{Yung-Ning Peng}
\address{Institute of Mathematics, Academia Sinica,
Taipei City, Taiwan, 10617, and
Department of Mathematics, National Central University, Chung-Li, Taiwan, 32054} \email{ynp@math.sinica.edu.tw; ynp@math.ncu.edu.tw}

\begin{abstract}
Let $e$ be an even nilpotent element, satisfying certain restrictions on its Jordan type, in a general linear Lie superalgebra.
We study the finite $W$-superalgebra $\W_e$ associated to such an $e$, and a realization of $\W_e$ in terms of a quotient of a shifted super Yangian is established.
\end{abstract}

\maketitle

\setcounter{tocdepth}{1}
\tableofcontents

\section{Introduction}
A finite $W$-algebra is an associative algebra constructed from a pair $(\g,e)$, where $\g$ is a finite dimensional semisimple or reductive Lie algebra and $e$ is a nilpotent element of $\g$. In the extreme case where $e=0$, the corresponding finite $W$-algebra is isomorphic to $U(\g)$, the universal enveloping algebra. In the other extreme case where $e$ is the principal (also called regular) nilpotent element $e$, Kostant proved that the associated finite $W$-algebra is isomorphic to the center of the universal enveloping algebra $U(\g)$ (cf. \cite{Ko}). In recent decades, there were many new developments on $W$-algebras; we refer to the survey papers \cite{Lo} and \cite{Wa} for details.

On the other hand, the Yangians, defined by Drinfeld in 1983, are certain non-commutative Hopf algebras that are important examples of quantum groups. They were used to generate the rational solutions of the Yang-Baxtor equation; see the book \cite{Mo} for more details and further applications of the Yangians.

The connection between Yangians and finite $W$-algebras of type A associated to a {\em rectangular} nilpotent element $e$ was first noticed by Ragoucy and Sorba in \cite{RS}. The term ``rectangular" means that the Jordan blocks of $e$ are all of the same size.
Brundan and Kleshchev in \cite{BK2} generalized the result to an arbitrary nilpotent $e$ by a different approach. As a consequence, a realization of finite $W$-algebra of type A as a quotient of a so-called shifted Yangian is obtained, and this provides a powerful tool for the study of finite $W$-algebras.

In this article, we establish such a connection between finite $W$-superalgebras and super Yangians explicitly in type A where the Jordan type of $e$ satisfies a certain condition (\ref{crucialhypo}). Let $Y_{m|n}$ denote the super Yangian of the general linear Lie superalgebra $\gl_{m|n}$. In fact, such a connection was firstly obtained in \cite{BR} when $e\in\glMN$ is rectangular; see also \cite{Pe2}. Their result shows that the finite $W$-superalgebra associated to a rectangular $e\in\glMN$ is isomorphic to the truncated super Yangian $Y_{m|n}^\ell$, which is a certain quotient of the super Yangian $Y_{m|n}$. Here the indices $m$ and $n$ are determined by the number of Jordan blocks of the rectangular $e$ and $\ell$ is the size of its Jordan block.

In a more recent paper \cite{BBG}, the connection between the finite $W$-superalgebra associated to an $e\in\glMN$ and $Y_{1|1}$ is developed. It corresponds to the case when the nilpotent element $e\in\glMN$ is principal. Our main result (Theorem \ref{main}) is to establish an isomorphism of superalgebras between the truncated shifted super Yangian for $\gl_{1|n}$ and a finite $W$-superalgebra.

Let us explain our approach, which is roughly generalizing the argument in \cite{BK2} to the general linear Lie superalgebras. Firstly we give a presentation of the shifted super Yangian, denoted by $Y_{1|n}(\sigma)$, which is a subalgebra of $Y_{1|n}$ associated to a matrix $\sigma$. The set of generators is a certain subset, determined by $\sigma$, of the generators of $Y_{1|n}$. The defining relations are modified from the defining relations of $Y_{1|n}$ according to $\sigma$ as well. Then we quotient out a certain ideal to obtain the truncated super Yangian $Y_{1|n}^\ell(\sigma)$. One may naively think that those generators with degree higher than $\ell$ vanish in $Y_{1|n}^\ell(\sigma)$.

Next we introduce certain combinatorial objects called $pyramids$; see \cite{EK}, \cite{Ho}. This gives a nice way to record the necessary information (that is, $\sigma$ and $\ell$) to define $Y_{1|n}^\ell(\sigma)$ by a diagram. For example, 
$$
\ell=4,\,\,
\sigma = \left(\begin{array}{l|ll}
0&1&1\\
\hline
0&0&0\\
1&1&0
\end{array}\right)
\qquad\longleftrightarrow \qquad
\pi = 
{\begin{picture}(90, 35)%
\put(15,-20){\line(1,0){60}}
\put(15,-5){\line(1,0){60}}
\put(30,10){\line(1,0){45}}
\put(30,25){\line(1,0){30}}
\put(15,-20){\line(0,1){15}}
\put(30,-20){\line(0,1){45}}
\put(45,-20){\line(0,1){45}}
\put(60,-20){\line(0,1){45}}
\put(75,-20){\line(0,1){30}}
\put(18,-15){$-$}\put(33,-15){$-$}\put(48,-15){$-$}\put(63,-15){$-$}
\put(33,0){$-$}\put(48,0){$-$}\put(63,0){$-$}
\put(33,15){$+$}\put(48,15){$+$}
\end{picture}}
$$

The merit of using a pyramid is that one may obtain a nilpotent element $e$ and a semisimple element $h$ that determine a finite $W$-superalgebra. In our example above, 
\begin{align*}
e=e_{\pa{1}\,\pa{2}}+e_{24}+e_{46}+e_{13}+e_{35}+e_{57},\\
h=\text{diag}(1,-1,3,1,1,-1,-1,-3,-3),
\end{align*}
where $e_{ij}$ means the elementary matrix in $\gl_{2|7}$; see \textsection 7 for detail. 

Therefore, given a pyramid $\pi$, we simultaneously obtain a truncated super Yangian $Y_{\pi}$, which is $Y_{1|n}^\ell(\sigma)$ for some appropriate choice of $\ell$ and $\sigma$, and a finite $W$-superalgebra $\W_{\pi}$ associated to a certain nilpotent $e$ determined by the pyramid $\pi$. Our main result is that there exists an isomorphism of superalgebras between $Y_\pi$ and $\W_\pi$. 

In \textsection 8, we introduce the notion of {\em super height} so that one may explicitly write down certain distinguished elements in $\W_\pi$ according to the diagram $\pi$. Eventually, we prove that the map sending the generators of $Y_\pi$ into these distinguished elements is an isomorphism of filtered superalgebras by induction on $\ell$, the number of boxes of the base of $\pi$. As a consequence, a presentation of the finite $W$-superalgebra $\W_\pi$ is obtained.

The general case, which means the even nilpotent $e\in\glMN$ could be arbitrary, is highly challenging and requires new presentations of the super Yangian that are unknown yet; see Remark \ref{gencase}.

This article is organized as follows. In \textsection 2, we define the shifted Yangian $Y_{1|n}(\sigma)$ and prove a PBW theorem for it. In \textsection 3, we introduce the notion of parabolic presentations for $Y_{1|n}(\sigma)$ as in \cite{Pe1}. An important consequence is that we may write down an explicit formula for the so-called baby comultiplications in \textsection 4. In \textsection 5, we introduce the canonical filtration of $Y_{1|n}(\sigma)$. In the end it corresponds to the Kazhdan filtration of finite $W$-superalgebras. Then we define the truncated shifted Yangian $Y_{1|n}^\ell(\sigma)$ in \textsection 6 as a quotient of $Y_{1|n}(\sigma)$, and prove that $Y_{1|n}^\ell(\sigma)$ shares many nice properties of $Y_{1|n}(\sigma)$ such as PBW bases and baby comultiplications. Next we switch our attention to finite $W$-superalgebras. In \textsection 7 we give the definition of the finite $W$-superalgebra with respect to an even good $\mathbb{Z}$-grading. Then we use {\em pyramids} as a tool to encode the information needed to define a finite $W$-superalgebra. Moreover, we explain how to read off a truncated shifted Yangian $Y_{1|n}^\ell(\sigma)$ from a given pyramid $\pi$. In \textsection 8, we give explicitly the formulae for some elements in $U(\mathfrak{p})$ that eventually can be identified as generators of our finite $W$-superalgebra. Our main theorem is stated and proved in \textsection 9.

{\em Notation:} In this article the underlying field is always $\mathbb{C}$. A superalgebra means an associative $\mathbb{Z}_2$-graded algebra. The parity of a homogeneous element $x$ is denoted by $|x|$. For homogeneous elements $x$ and $y$ in a superalgebra $A=A_{\pa{0}}\oplus A_{\pa{1}}$, their supercommutator is $[x,y]:= xy-(-1)^{|x||y|}yx$. We say $x$ and $y$ supercommute if $[x,y]=0$. For homogeneous $x_1,\ldots,x_t\in A$, an ordered supermonomial in $x_1,\ldots,x_t$ means a monomial of the form $x_1^{i_1}\cdots x_t^{i_t}$ for some $i_1,\ldots,i_t\in \mathbb{Z}_{\geq 0}$ and $i_j\leq 1$ if $x_j$ is odd.

\section{Shifted super Yangian of $\gl_{1|n}$}
In this section, we recall the definition of the super Yangian $Y_{1|n}$. Moreover, we define a certain superalgebra called the shifted super Yangian, which turns out to be a subalgebra of $Y_{1|n}$.

The super Yangian $Y_{1|n}$, which was introduced in \cite{Na}, is the associative $\mathbb{Z}_2$-graded algebra (i.e., superalgebra) over $\mathbb{C}$ with generators
\[
\left\lbrace t_{ij}^{(r)}\,| \; 1\le i,j \le n+1; r\ge 0\right\rbrace,
\] where $t_{ij}^{(0)}:=\delta_{ij}$
and defining relations
\begin{equation}\label{Nadef}
[t_{ij}^{(r)}, t_{hk}^{(s)}] = (-1)^{\tp(i)\tp(j) + \tp(i)\tp(h) + \tp(j)\tp(h)}
\sum_{t=0}^{\mathrm{min}(r,s) -1} \Big( t_{hj}^{(t)} t_{ik}^{(r+s-1-t)} - t_{hj}^{(r+s-1-t)}t_{ik}^{(t)} \Big),
\end{equation}
where $\tp(i) = 0$ if $i=1$ and $\tp(i)=1$ otherwise. The bracket in (\ref{Nadef}) is understood as a supercommutator. For $r>0$, the element $t_{ij}^{(r)}$ is defined to be an odd element if $\tp(i)+\tp(j)\equiv 1$ (mod 2) and an even element if $\tp(i)+\tp(j)\equiv 0$ (mod 2).

The elements $\{t_{ij}^{(r)}\}$ are called RTT generators while the defining relations (\ref{Nadef}) are called RTT relations. Next we use an alternate presentation of $Y_{1|n}$ to define shifted Yangians that are not easily obtained from the classical definition.

Let $\sigma$ be an $(n+1)\times(n+1)$ matrix $(s_{i,j})_{1\leq i,j\leq n+1}$ where the entries are non-negative integers satisfying that
\begin{equation}\label{sijk}
s_{i,j} + s_{j,k} = s_{i,k},
\end{equation}
whenever $|i-j|$+$|j-k|$=$|i-k|$. Immediately, we have $s_{1,1}=s_{2,2}=\cdots=s_{n+1,n+1}= 0$, and $\sigma$ is determined by the upper diagonal entries $s_{1,2}, s_{2,3},\ldots, s_{n,n+1}$ and the lower diagonal entries $s_{2,1}, s_{3,2},\ldots, s_{n+1,n}$. In addition, we will add lines to emphasize the parities. Such a matrix will be called a shift matrix. For example, the following matrix is a shift matrix:
\[
\sigma = \left(\begin{array}{l|llll}
0&0&1&1&2\\
\hline
0&0&1&1&2\\
1&1&0&0&1\\
3&3&2&0&1\\
4&4&3&1&0
\end{array}\right).
\]

\begin{definition}
The shifted Yangian of $\gl_{1|n}$ associated to $\sigma$, denoted by $Y_{1|n}(\sigma)$, is the superalgebra over $\C$ generated by the following elements
\[\lbrace \Di| 1\leq i\leq n+1, r\geq 0\rbrace,\]
\[\lbrace \Ei| 1\leq i\leq n, r> s_{i,i+1}\rbrace,\]
\[\lbrace \Fi| 1\leq i\leq n, r> s_{i+1,i}\rbrace,\]
subject to the following defining relations:
 \begin{align}
D_i^{(0)}&=1\\
\sum_{t=0}^{r}D_i^{(t)}D_i^{\prime(r-t)}&=\delta_{r0}\\
[D_i^{(r)},D_j^{(s)}]&=\delta_{ij}(-1)^{\tp(i)}\sum_{t=0}^{min(r,s)-1}(D_i^{(t)}D_i^{(r+s-1-t)}-D_i^{(r+s-1-t)}D_i^{(t)})
\end{align}
\begin{align}\notag
&\lefteqn{[D_i^{(r)}, E_j^{(s)}]=}\\
&\;\left\{
  \begin{array}{ll}
     \displaystyle   -\delta_{i,j}\sum_{t=0}^{r-1} D_i^{(t)}E_j^{(r+s-1-t)}
         +\delta_{i,j+1}\sum_{t=0}^{r-1}D_i^{(t)}E_j^{(r+s-1-t)}, &j \ne 1,\\[4mm]
     \displaystyle   \delta_{i,j}\sum_{t=0}^{r-1} D_i^{(t)}E_i^{(r+s-1-t)}
          +\delta_{i,j+1}\sum_{t=0}^{r-1}D_i^{(t)}E_j^{(r+s-1-t)}, &j = 1,
 \end{array}\right.\\[4mm]\notag
&\lefteqn{[D_i^{(r)}, F_j^{(s)}]=}\\
&\left\{
  \begin{array}{ll}
  \displaystyle \delta_{i,j}\sum_{t=0}^{r-1}
  F_j^{(r+s-1-t)}D_i^{(t)}
  -\delta_{i,j+1}\sum_{t=0}^{r-1}F_j^{(r+s-1-t)}D_i^{(t)}, &j \ne 1,\\[4mm]
   \displaystyle   -\delta_{i,j}\sum_{t=0}^{r-1}F_j^{(r+s-1-t)}D_i^{(t)}
      -\delta_{i,j+1}\sum_{t=0}^{r-1}F_j^{(r+s-1-t)}D_i^{(t)}\,, &j = 1,
\end{array}\right.\\[4mm]\notag
\end{align}
\begin{equation}\label{1EE}
[E_{i}^{(r)}, E_{i}^{(s)}]=
 \left\{
  \begin{array}{ll}
    \displaystyle      -\sum_{t=1}^{s-1}E_{i}^{(t)}E_{i}^{(r+s-1-t)}
          +\sum_{t=1}^{r-1}E_{i}^{(t)}E_{i}^{(r+s-1-t)},& i\neq 1,\\[5mm]
     \displaystyle     0,& i=1,\\[3mm]
  \end{array}\right.\\
\end{equation}
\begin{equation}\label{1FF}
[F_{i}^{(r)}, F_{i}^{(s)}]=
\left\{
  \begin{array}{ll}
   \displaystyle -\sum_{t=1}^{r-1}F_{i}^{(r+s-1-t)}F_{i}^{(t)}
   +\sum_{t=1}^{s-1}F_{i}^{(r+s-1-t)}F_{i}^{(t)},& i\neq 1,\\[5mm]
   \displaystyle 0, & i=1,\\[3mm]
  \end{array}\right.\\
\end{equation}

\begin{equation}
[E_{i}^{(r)}, F_{j}^{(s)}]=
         \delta_{i,j} \sum_{t=0}^{r+s-1}D_{i+1}^{(r+s-1-t)} D^{\prime (t)}_{i}\,,
\end{equation}

\begin{align}
&[E_{i}^{(r+1)}, E_{i+1}^{(s)}]-[E_{i}^{(r)}, E_{i+1}^{(s+1)}]
=-E_{i}^{(r)}E_{i+1}^{(s)}\,,\\[3mm]\label{adm}
&[F_{i}^{(r+1)}, F_{i+1}^{(s)}]-[F_{i}^{(r)}, F_{i+1}^{(s+1)}]
=F_{i+1}^{(s)}F_{i}^{(r)}\,,\\[3mm]
&[E_{i}^{(r)}, E_{j}^{(s)}] = [F_{i}^{(r)}, F_{j}^{(s)}] = 0,
\quad\text{\;\;if\;\; $|i-j|\geq 1$ },
\end{align}

\begin{eqnarray}
&\big[E_i^{(r)},[E_i^{(s)},E_j^{(k)}]\big]+
\big[E_i^{(s)},[E_i^{(r)},E_j^{(k)}]\big]=0, &|i-j|\geq 1,\\[3mm]
&\big[F_i^{(r)},[F_i^{(s)},F_j^{(k)}]\big]+
\big[F_i^{(s)},[F_i^{(r)},F_j^{(k)}]\big]=0, &|i-j|\geq 1,
\end{eqnarray}
for all ``admissible" $i,j,k,r,s$. For example, the relation (\ref{adm}) should be understood to hold for all $1\leq i\leq n-1$, $r>s_{i+1,i}$ and $s>s_{i+2,i+1}$. The elements $\{E_{1}^{(r)} | r> s_{1,2}\}\cup\{F_{1}^{(r)}| r> s_{2,1}\}$ are the only odd generators.
\end{definition}

\begin{remark}
Note that the lower degree terms in (\ref{1EE}) and (\ref{1FF}) cancel each other. Hence the elements 
$\{ \Ei, F_{i}^{(s)} | 1\leq i\leq n, \, r\leq s_{i,i+1}, \, s\leq s_{i+1,i} \}$ will not appear in $Y_{1|n}(\sigma)$ although they show up in the defining relations.
\end{remark}

\begin{remark}
In the special case where $\sigma$ is the zero matrix, $Y_{1|n}(\sigma)=Y_{1|n}$ and the above presentation is exactly a special case of the presentation of $Y_{m|n}$ given in \cite{Go}, which is a generalization of Drinfeld's presentation. We will implicitly use this isomorphism in the remaining part of this article.
\end{remark}

Let $\Gamma$ be the map sending the generators of $Y_{1|n}(\sigma)$ 
to the elements with the same name in $Y_{1|n}$. 
We now prove that the map $\Gamma:Y_{1|n}(\sigma)\longrightarrow Y_{1|n}$ 
is an injective algebra homomorphism and hence 
$Y_{1|n}(\sigma)$ is canonically a subalgebra of $Y_{1|n}$.
As a corollary, a PBW basis for $Y_{1|n}(\sigma)$ is obtained.

 Define the $loop$ $filtration$ on $Y_{1|n}(\sigma)$
\begin{equation}\label{filt2}\notag
L_0 Y_{1|n}(\sigma) \subseteq L_1 Y_{1|n}(\sigma) \subseteq L_2 Y_{1|n}(\sigma) \subseteq \cdots
\end{equation}
by setting the degree of the generators $\Di$, $\Ei$, and $\Fi$ to be $(r-1)$ 
and $L_k Y_{1|n}(\sigma)$ to be the span of all supermonomials in the generators 
of total degree~$\leq k$ and denote the associated graded algebra by $\gr^LY_{1|n}(\sigma).$

Next, for all $1\leq i<j\leq n+1$ and $r>s_{i,j}$, we define the elements $E_{i,j}^{(r)}\in~Y_{1|n}(\sigma)$ recursively by
\begin{equation}
E_{i,i+1}^{(r)}:= \Ei, \qquad E_{i,j}^{(r)}:=-[E_{i,j-1}^{(r-s_{j,j-1})}, E_{j-1}^{(s_{j-1,j}+1)}].
\end{equation}
Similarly, for $1\leq i<j\leq n+1$ and $r>s_{j,i}$, we may define the elements $F_{j,i}^{(r)}\in Y_{1|n}(\sigma)$ by
\begin{equation}
F_{i+1,i}^{(r)}:= \Fi, \qquad F_{j,i}^{(r)}:=-[F_{j-1}^{(s_{j,j-1}+1)}, F_{j-1,i}^{(r-s_{j,j-1})}].
\end{equation}

For all $1\leq i,j\leq n+1$ and $r\geq s_{i,j}$, define
\begin{equation}\notag
e_{i,j;r} := \left\{
\begin{array}{ll}
\gr^L_{r} D_{i}^{(r+1)} &\hbox{if $i = j$,}\\
\gr^L_{r} E_{i,j}^{(r+1)} &\hbox{if $i < j$,}\\
\gr^L_{r} F_{i,j}^{(r+1)} &\hbox{if $i > j$,}
\end{array}
\right.
\end{equation}
where the elements on the right-hand side are in $\gr^LY_{1|n}(\sigma)$ of degree $r$.

Denote the Lie superalgebra $\gl_{1|n}\otimes \mathbb{C}[t]$ with basis elements 
$\lbrace e_{i,j}t^r\rbrace_{1\leq i,j\leq n+1, r\geq 0}$ by $\gl_{1|n} [t]$, which can 
be viewed as a graded Lie superalgebra by setting the degree of $e_{i,j}t^r$ to be $r$.

By assumption (\ref{sijk}), the elements $\{ e_{i,j}t^r \,|\, 1\leq i,j\leq n+1, r\geq s_{i,j}\}$ 
generate a subalgebra of $\gl_{1|n} [t]$ which we denote by $\gl_{1|n}[t](\sigma)$. 
The grading on $\gl_{1|n}[t](\sigma)$ induces a grading 
on $U\big(\gl_{1|n}[t](\sigma)\big)$, the universal enveloping algebra.

\begin{theorem}\label{UYiso}
The map $\gamma:U\big(\gl_{1|n}[t](\sigma)\big)\longrightarrow \gr^LY_{1|n}(\sigma)$ such that \[e_{i,j}t^r\longmapsto (-1)^{\tp(i)}e_{i,j;r}\] for each $1\leq i,j\leq n+1$ and $r\geq s_{i,j}$ is an isomorphism of graded superalgebras.
\end{theorem}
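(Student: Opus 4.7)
My plan is to prove $\gamma$ is an isomorphism of graded superalgebras in three stages, following the Brundan--Kleshchev template from \cite{BK2}: first check that $\gamma$ is a well-defined superalgebra homomorphism, then observe it is obviously surjective, and finally deduce injectivity by factoring $\gamma$ through the classical PBW theorem for the full super Yangian $Y_{1|n}$.

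\textbf{Well-definedness.} The defining relations of $U(\gl_{1|n}[t](\sigma))$ are simply the super Lie bracket relations of the graded loop superalgebra $\gl_{1|n}[t]$ restricted to admissible indices. To verify $\gamma$ is a homomorphism I would compute each bracket $[e_{i,j;r},\, e_{k,l;s}]$ in $\gr^L Y_{1|n}(\sigma)$ by using the recursive definitions
\[
E_{i,j}^{(r)} = -[E_{i,j-1}^{(r-s_{j,j-1})}, E_{j-1}^{(s_{j-1,j}+1)}], \qquad F_{j,i}^{(r)} = -[F_{j-1}^{(s_{j,j-1}+1)}, F_{j-1,i}^{(r-s_{j,j-1})}]
\]
to reduce to brackets of Chevalley generators, then apply the defining relations of $Y_{1|n}(\sigma)$. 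A routine case analysis shows that the leading loop-degree parts of those relations reproduce exactly the super Lie bracket of $\gl_{1|n}[t](\sigma)$, while the remaining terms in the sums have strictly lower loop degree and so vanish in $\gr^L Y_{1|n}(\sigma)$. The sign $(-1)^{\tp(i)}$ in the definition of $\gamma$ is exactly what is needed to reconcile the RTT-style sign conventions with the usual super bracket on the loop superalgebra.

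\textbf{Surjectivity and injectivity.} Surjectivity is immediate: the generators $\Di$, $\Ei$, $\Fi$ of $Y_{1|n}(\sigma)$ furnish $e_{i,i;r}$, $e_{i,i+1;r}$, $e_{i+1,i;r}$ in the associated graded, and the recursions above produce every $e_{i,j;r}$, so $\gamma$ hits a generating set. For injectivity, consider the natural map $\Gamma: Y_{1|n}(\sigma) \to Y_{1|n}$ sending each named generator to the element of $Y_{1|n}$ with the same name. It preserves the loop filtration, hence induces $\gr^L \Gamma: \gr^L Y_{1|n}(\sigma) \to \gr^L Y_{1|n}$. The PBW theorem for $Y_{1|n}$ (cf. \cite{Go, Na}) furnishes an identification $\gr^L Y_{1|n} \cong U(\gl_{1|n}[t])$, and the composite
\[
U(\gl_{1|n}[t](\sigma)) \xrightarrow{\gamma} \gr^L Y_{1|n}(\sigma) \xrightarrow{\gr^L \Gamma} \gr^L Y_{1|n} \cong U(\gl_{1|n}[t])
\]
is, when tracked on generators, precisely the canonical embedding of enveloping algebras induced by the inclusion of graded Lie sub-superalgebras $\gl_{1|n}[t](\sigma) \hookrightarrow \gl_{1|n}[t]$. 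The latter is injective by the classical PBW theorem for Lie superalgebras, forcing $\gamma$ to be injective. As a byproduct, $\Gamma$ itself becomes injective and $Y_{1|n}(\sigma)$ a subalgebra of $Y_{1|n}$.

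\textbf{Main obstacle.} The substantive work is the well-definedness check: one must verify, relation-by-relation, that every correction term on the right-hand side of a defining relation of $Y_{1|n}(\sigma)$ has loop degree strictly less than the leading term, so that what survives in $\gr^L Y_{1|n}(\sigma)$ at the top degree is exactly the loop super bracket. One must also check that the recursive definitions of $E_{i,j}^{(r)}$ and $F_{j,i}^{(r)}$ produce elements whose loop-graded leading terms $e_{i,j;r}$ depend only on the pair $(i,j)$ and not on the order in which the nested commutators are unpacked; the consistency condition \eqref{sijk} on the shift matrix is exactly what makes this possible. Once this filtered bookkeeping is nailed down, surjectivity and the injectivity argument via $\gr^L \Gamma$ are essentially automatic.
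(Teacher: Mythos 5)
Your proposal is correct and follows essentially the same route as the paper's proof: establish well-definedness via the graded commutation relation~(\ref{s-inj}), note surjectivity is immediate, then factor injectivity through $\gr^L\Gamma\colon \gr^L Y_{1|n}(\sigma)\to \gr^L Y_{1|n}$, invoking Gow's PBW result for the $\sigma=0$ case together with the PBW theorem for the graded Lie sub-superalgebra $\gl_{1|n}[t](\sigma)\subseteq\gl_{1|n}[t]$. The paper phrases the final step as linear independence of ordered supermonomials pulled back along $\gr^L\Gamma$, while you phrase it as the composite being the canonical embedding of enveloping algebras; these are the same argument.
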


\begin{proof}
Following \cite[Theorem 3]{Go}, one can show that for all $1\leq i,j,h,k\leq n+1$ and $r\geq s_{i,j}$, $s\geq s_{h,k}$, the following identity holds in $\gr^LY_{1|n}$:
\begin{equation}\label{s-inj}
[e_{i,j;r},e_{h,k;s}]=(-1)^{\tp(j)}\delta_{h,j}e_{i,k;r+s}
-(-1)^{\tp(i)\tp(j)+\tp(i)\tp(h)+\tp(j)\tp(h)}\delta_{i,k}e_{h,j;r+s}.
\end{equation}
As a result, the map $\gamma$ is a well-defined homomorphism and obviously surjective. It remains to show that $\gamma$ is injective.

We first assume that $\sigma$ is the zero matrix, hence $Y_{1|n}(\sigma)=Y_{1|n}$. According to the proof of \cite[Theorem 3]{Go}, we know that the set of all ordered supermonomials in the elements $\{ e_{i,j;r} \,|\, 1\leq i,j\leq n+1, \, r\geq 0\}$ are linearly independent in $\gr^LY_{1|n}$ and $\gamma$ is an isomorphism in this special case.

In general, the map $\Gamma:Y_{1|n}(\sigma)\longrightarrow Y_{1|n}$ is a homomorphism of filtered superalgebras, which induces a map $\gr^LY_{1|n}(\sigma)\longrightarrow \gr^LY_{1|n}$ sending $e_{i,j;r}\in~\gr^LY_{1|n}(\sigma)$ to $e_{i,j;r}\in \gr^LY_{1|n}$. The previous paragraph implies that the set of ordered supermonomials in the elements $\{ e_{i,j;r} \,|\, 1\leq i,j\leq n+1, r\geq s_{i,j}\}$ are linearly independent in $\gr^LY_{1|n}(\sigma)$ and hence $\gamma$ is an isomorphism in general.
\end{proof}

\begin{remark}
In general, $\Gamma$ does not send the general parabolic elements $E^{(r+1)}_{i,j}$, $F^{(r+1)}_{j,i}$ in $Y_{1|n}(\sigma)$ 
to $E^{(r+1)}_{i,j}$, $F^{(r+1)}_{j,i}$ of  $Y_{1|n}$ if $j-i>1$.
\end{remark}

\begin{corollary}
The canonical map $\Gamma:Y_{1|n}(\sigma)\longrightarrow Y_{1|n}$ is injective.
\end{corollary}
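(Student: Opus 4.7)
The plan is to deduce injectivity of $\Gamma$ from injectivity of its associated graded map, essentially repackaging Theorem \ref{UYiso}.

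First I would check that $\Gamma$ is a morphism of filtered superalgebras with respect to the loop filtrations on both sides. This is immediate: $\Gamma$ sends a generator of the form $D_i^{(r)}$, $E_i^{(r)}$, or $F_i^{(r)}$ in $Y_{1|n}(\sigma)$ to the element of the same name in $Y_{1|n}$, and both carry loop degree $r-1$. Hence $\Gamma$ induces a homomorphism $\gr^L\Gamma: \gr^L Y_{1|n}(\sigma) \to \gr^L Y_{1|n}$ of graded superalgebras, which by construction sends each $e_{i,j;r}\in\gr^L Y_{1|n}(\sigma)$ to $e_{i,j;r}\in\gr^L Y_{1|n}$.

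Next I would apply Theorem \ref{UYiso} twice, once with shift matrix $\sigma$ and once with the zero matrix, to obtain isomorphisms $U(\gl_{1|n}[t](\sigma))\cong \gr^L Y_{1|n}(\sigma)$ and $U(\gl_{1|n}[t])\cong \gr^L Y_{1|n}$. Under these identifications, $\gr^L\Gamma$ corresponds to the algebra map $U(\gl_{1|n}[t](\sigma))\to U(\gl_{1|n}[t])$ induced by the Lie superalgebra inclusion $\gl_{1|n}[t](\sigma)\hookrightarrow \gl_{1|n}[t]$ (which is a genuine inclusion of subalgebras by the definition of $\gl_{1|n}[t](\sigma)$ via the constraint \eqref{sijk}). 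By the PBW theorem for universal enveloping algebras of Lie superalgebras, the map $U(\mathfrak{a})\to U(\mathfrak{b})$ induced by any inclusion of Lie subsuperalgebras $\mathfrak{a}\hookrightarrow\mathfrak{b}$ is injective; applying this gives injectivity of $\gr^L\Gamma$.

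Finally I would invoke the standard filtered-to-graded principle: if a morphism of filtered (super)algebras has injective associated graded, and the source filtration is exhaustive and separated, then the morphism itself is injective. Both hypotheses hold here ($L_k Y_{1|n}(\sigma)$ exhausts $Y_{1|n}(\sigma)$, and $\bigcap_k L_k Y_{1|n}(\sigma)=L_0 Y_{1|n}(\sigma)\cap\{0\text{-th graded piece is separated by the PBW identification}\}=0$), so the conclusion follows. There is no real obstacle in this argument, since all the substantive content, namely the computation of $\gr^L Y_{1|n}(\sigma)$ and the linear independence of ordered supermonomials, was already established in the proof of Theorem \ref{UYiso}; the corollary is a clean extraction of the injectivity statement hiding inside that proof.
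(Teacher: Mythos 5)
Your proposal is correct and follows essentially the same strategy as the paper's proof: pass to the loop-graded level, observe that $\gr^L\Gamma$ is injective (which the paper extracts directly from the argument inside Theorem \ref{UYiso}, and which you re-derive by conjugating with the two $\gamma$ isomorphisms and invoking PBW for the Lie subalgebra inclusion), then lift injectivity back through the filtration; the paper phrases this last step as ``induction on degree.'' One small imprecision: for an increasing filtration $L_0\subseteq L_1\subseteq\cdots$ the intersection $\bigcap_k L_k$ equals $L_0$, not $0$, so ``separated'' is not the right hypothesis --- what one actually needs (and has here) is that the filtration is exhaustive and bounded below ($L_{-1}=0$), which is enough for the induction on the leading term.
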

\begin{proof}
The map $\Gamma:Y_{1|n}(\sigma)\longrightarrow Y_{1|n}$ is a filtered map and the induced map $\gr^LY_{1|n}(\sigma)\longrightarrow \gr^LY_{1|n}$ is injective. The corollary follows from induction on degree.
\end{proof}

We denote the subalgebra of $Y_{1|n}(\sigma)$ generated by all the $\Di$'s by $Y^0_{1|n}$, 
the subalgebra generated by all the $\Ei$'s by $Y^+_{1|n}(\sigma)$ and 
the subalgebra generated by all the $\Fi$'s by $Y^-_{1|n}(\sigma)$, respectively. The following corollary gives PBW bases of $Y_{1|n}(\sigma)$ and its subalgebras. 

\begin{corollary}\label{pbw1}
(1)
The set of monomials in the elements $\{D_i^{(r)}\}_{1 \leq i \leq n+1,r > 0}$ taken in some fixed order forms a basis for $Y^0_{1|n}$.\\
(2)
The set of supermonomials in the elements $\{E_{i,j}^{(r)}\}_{1 \leq i < j \leq n+1, r > s_{i,j}}$ taken in some fixed order forms a basis for $Y^+_{1|n}(\sigma)$.\\
(3)
The set of supermonomials in the elements $\{F_{j,i}^{(r)}\}_{1 \leq i < j \leq n+1, r > s_{j,i}}$ taken in some fixed order forms a basis for $Y^-_{1|n}(\sigma)$.\\
(4)
The set of supermonomials in the union of the elements listed in (1)--(3) taken in some fixed order forms a basis for $Y_{1|n}(\sigma)$.
\end{corollary}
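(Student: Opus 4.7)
The plan is to deduce all four parts from Theorem \ref{UYiso} via a standard lift-from-associated-graded argument. Since $\gl_{1|n}[t](\sigma)$ has basis $\{e_{i,j}t^r\mid 1\leq i,j\leq n+1,\ r\geq s_{i,j}\}$ by condition (\ref{sijk}), the PBW theorem for universal enveloping superalgebras gives a basis of $U(\gl_{1|n}[t](\sigma))$ consisting of ordered supermonomials in these generators. Transporting along the isomorphism $\gamma$ of Theorem \ref{UYiso} produces a homogeneous basis of $\gr^L Y_{1|n}(\sigma)$ consisting of ordered supermonomials in the symbols $e_{i,j;r}$.

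I would then invoke the general lifting principle for exhaustive filtered algebras: if a family $\{x_\alpha\}\subseteq L_{d_\alpha}Y_{1|n}(\sigma)$ has principal symbols that form a basis of $\gr^L Y_{1|n}(\sigma)$, then $\{x_\alpha\}$ itself is a basis of $Y_{1|n}(\sigma)$ (linear independence by taking the top-degree component of any purported dependence, spanning by induction on filtered degree). Since, by construction, the principal symbols of $D_i^{(r)}$, $E_{i,j}^{(r)}$ and $F_{j,i}^{(r)}$ are, up to signs, precisely the $e_{i,j;r-1}$, this yields part (4) immediately.

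For parts (1)--(3) I would restrict the same argument to the subalgebras $Y^0_{1|n}$, $Y^+_{1|n}(\sigma)$ and $Y^-_{1|n}(\sigma)$. The loop filtration restricts to each, the induced maps on associated gradeds inject into $\gr^L Y_{1|n}(\sigma)$, and one needs to identify the images with $\gamma(U(\mathfrak{h}[t]))$, $\gamma(U(\mathfrak{n}^+[t](\sigma)))$ and $\gamma(U(\mathfrak{n}^-[t](\sigma)))$ respectively, where $\mathfrak{h},\mathfrak{n}^\pm$ are the standard Cartan and nilpotent subalgebras of $\gl_{1|n}$. The ``$\subseteq$'' inclusions are immediate from the generators; the reverse direction reduces to showing that the symbols $e_{i,i+1;r-1}$ (for $r>s_{i,i+1}$) Lie-generate $\mathfrak{n}^+[t](\sigma)$, and analogously for the lower nilpotent. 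Once these identifications are in place, PBW for each enveloping algebra combined with the lifting principle yields the desired bases; part (1) is the easiest since $\mathfrak{h}[t]$ is purely even and unshifted, so ordinary monomials in the $D_i^{(r)}$ suffice.

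The main obstacle I anticipate is the Lie-generation claim for $\mathfrak{n}^\pm[t](\sigma)$ from just the ``simple-root'' symbols under a general shift $\sigma$. In the unshifted case $\sigma=0$ this is classical, but for general $\sigma$ one must check that iteratively bracketing $e_{i,i+1;r-1}$ with $e_{i+1,i+2;s-1}$ (permitted only when $r>s_{i,i+1}$ and $s>s_{i+1,i+2}$) actually produces every admissible $e_{i,i+2;q}$ with $q\geq s_{i,i+2}$. This is exactly the content of condition (\ref{sijk}), and an induction on $j-i$ using the bracket formula (\ref{s-inj}) inside $\gr^L Y_{1|n}(\sigma)$ closes the argument.
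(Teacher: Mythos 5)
Your proposal is correct and follows essentially the same route as the paper: identify $\gr^L Y_{1|n}(\sigma)$ with $U(\gl_{1|n}[t](\sigma))$ via Theorem~\ref{UYiso}, transport the PBW basis of the enveloping algebra, lift through the exhaustive loop filtration, and restrict to the triangular subalgebras for (1)--(3). The Lie-generation step you flag for $\mathfrak{n}^\pm[t](\sigma)$, resolved by the additivity condition~(\ref{sijk}) together with the bracket formula~(\ref{s-inj}), is precisely what the paper compresses into the phrase ``The others can be proved similarly using~(\ref{s-inj}).''
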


\begin{proof}
($4$) follows from Theorem \ref{UYiso} and the PBW theorem for $U(\gl_{1|n}[t](\sigma))$. The others can be proved similarly using (\ref{s-inj}).
\end{proof}

\begin{corollary}\label{iso1}
The multiplicative map
\[Y^-_{1|n}(\sigma)\otimes Y^0_{1|n}\otimes Y^+_{1|n}(\sigma)~\longrightarrow~Y_{1|n}(\sigma)\]
is an isomorphism of vector spaces.
\end{corollary}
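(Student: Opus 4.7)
The proof plan is a direct corollary of the PBW theorem just established. The key observation is that in Corollary~\ref{pbw1}(4) the ordering of the generators is arbitrary, so we are free to fix a total order in which every $F_{j,i}^{(r)}$ precedes every $D_i^{(r)}$, which in turn precedes every $E_{i,j}^{(r)}$.

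With this choice, an ordered supermonomial in the full generating set factors uniquely as $f\cdot d\cdot e$, where $f$ is an ordered supermonomial in the $F_{j,i}^{(r)}$, $d$ is an ordered monomial in the $D_i^{(r)}$, and $e$ is an ordered supermonomial in the $E_{i,j}^{(r)}$. By parts (1)--(3) of Corollary~\ref{pbw1}, the set of such $f$'s is a basis of $Y^-_{1|n}(\sigma)$, the set of such $d$'s is a basis of $Y^0_{1|n}$, and the set of such $e$'s is a basis of $Y^+_{1|n}(\sigma)$. Hence the pure tensors $f\otimes d\otimes e$ form a basis of $Y^-_{1|n}(\sigma)\otimes Y^0_{1|n}\otimes Y^+_{1|n}(\sigma)$.

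The multiplicative map sends $f\otimes d\otimes e$ to the ordered supermonomial $fde$, and by Corollary~\ref{pbw1}(4) the collection of all such $fde$ is precisely a basis of $Y_{1|n}(\sigma)$. Therefore the map sends a basis bijectively to a basis and is consequently a vector space isomorphism.

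There is no real obstacle here: the only subtlety is making sure the ordering convention used in Corollary~\ref{pbw1}(4) is compatible with the triangular decomposition (i.e.\ is block-ordered as $F,D,E$), which is allowed since the PBW statement holds for any fixed order.
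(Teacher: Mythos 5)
Your proof is correct and takes the same (implicit) approach as the paper, which presents Corollary~\ref{iso1} without a separate argument precisely because it follows immediately from Corollary~\ref{pbw1} by choosing a block-ordered total order $F<D<E$. You have simply made explicit the standard PBW-factorization argument the paper leaves to the reader.
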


By the defining relations of $Y_{1|n}$, the map $\tau:Y_{1|n}\rightarrow Y_{1|n}$ defined by
\begin{equation}\label{taudef}
\tau(D_i^{(r)}) = D_i^{(r)},\quad
\tau(E_i^{(r)}) = F_i^{(r)},\quad
\tau(F_i^{(r)}) = E_i^{(r)}
\end{equation}
is an anti-automorphism of order 2 and its restriction on $Y_{1|n}(\sigma)$ 
gives an anti-isomorphism $\tau:Y_{1|n}(\sigma)\rightarrow Y_{1|n}(\sigma^t)$, 
where $\sigma^t$ is the transpose of the matrix $\sigma$.

Suppose instead that
$\vec\sigma = (\vec{s}_{i,j})_{1 \leq i,j \leq n+1}$ is another shift matrix satisfying (\ref{sijk}) and
in addition $\vec{s}_{i,i+1}+\vec s_{i+1,i}= s_{i,i+1}+s_{i+1,i}$ for all $i=1,\dots,n$.
Another check of relations shows that the map $\iota:Y_{1|n}(\sigma) \rightarrow Y_{1|n}({\vec{\sigma}})$ defined by
\begin{equation}\label{iotadef}
\iota(D_i^{(r)}) = \vec{D}_i^{(r)},\quad
\iota(E_i^{(r)}) = \vec{E}_i^{(r-s_{i,i+1}+\vec s_{i,i+1})},\quad
\iota(F_i^{(r)}) = \vec{F}_i^{(r-s_{i+1,i}+\vec s_{i+1,i})},
\end{equation}
is a superalgebra isomorphism. Here and later on we denote the generators $D_i^{(r)}, E_i^{(r)}$ and $F_i^{(r)}$ of $Y_{1|n}(\vec\sigma)$ instead by $\vec{D}_i^{(r)}, \vec{E}_i^{(r)}$ and $\vec{F}_i^{(r)}$ to avoid possible confusion.

\section{Parabolic presentations}\label{parabo}
In this section, we introduce the notion of parabolic presentation to the shifted super Yangian $Y_{1|n}(\sigma)$. We start with a brief review about parabolic presentations of $Y_{1|n}$ from \cite{Pe1} in \textsection 3.1, and then we extend the notion to $Y_{1|n}(\sigma)$ in \textsection 3.2.

\subsection{Parabolic presentations of $Y_{1|n}$}
Recall that $Y_{1|n}$ is generated by the elements $t_{i,j}^{(r)}$ with defining relation (\ref{Nadef}). We define the formal power series
\[t_{ij} (u) = \delta_{ij} + t_{ij}^{(1)} u^{-1} + t_{ij}^{(2)}u^{-2} + t_{ij}^{(3)}u^{-3}+\ldots
\] and the matrix $T(u):=\big(t_{ij}(u)\big)_{1\leq i,j\leq n+1}$.

Let $\nu$ be a composition of $n$ with length $m$. For notational reason, we set
\[
\mu_1=1 \qquad\text{and}\qquad \mu_{j}=\nu_{j-1} \quad\text{for all}\quad 2\leq j\leq m+1,
\]
and $\mu=(\mu_1\,|\, \mu_{2},\mu_{3},\ldots,\mu_{m+1})$ denotes the composition of ($1|n$).

By definition, the leading minors of the matrix $T(u)$ are invertible. Depending on the given composition $\mu$, $T(u)$ possesses a $Gauss$ $decomposition$
\begin{equation*}
T(u) = F(u) D(u) E(u)
\end{equation*}
for unique {\em block matrices} $D(u)$, $E(u)$ and $F(u)$ of the form
$$
D(u) = \left(
\begin{array}{cccc}
D_{1}(u) & 0&\cdots&0\\
0 & D_{2}(u) &\cdots&0\\
\vdots&\vdots&\ddots&\vdots\\
0&0 &\cdots&D_{m+1}(u)
\end{array}
\right),
$$

$$
E(u) =
\left(
\begin{array}{cccc}
I_{\mu_1} & E_{1,2}(u) &\cdots&E_{1,m+1}(u)\\
0 & I_{\mu_2} &\cdots&E_{2,m+1}(u)\\
\vdots&\vdots&\ddots&\vdots\\
0&0 &\cdots&I_{\mu_{m+1}}
\end{array}
\right),\:
$$

$$
F(u) = \left(
\begin{array}{cccc}
I_{\mu_1} & 0 &\cdots&0\\
F_{2,1}(u) & I_{\mu_2} &\cdots&0\\
\vdots&\vdots&\ddots&\vdots\\
F_{m+1,1}(u)&F_{m+1,2}(u) &\cdots&I_{\mu_{m+1}}
\end{array}
\right),
$$
where
\begin{align}
D_a(u) &=\big(D_{a;i,j}(u)\big)_{1 \leq i,j \leq \mu_a},\label{da}\\
E_{a,b}(u)&=\big(E_{a,b;i,j}(u)\big)_{1 \leq i \leq \mu_a, 1 \leq j \leq \mu_b},\label{eab}\\
F_{b,a}(u)&=\big(F_{b,a;i,j}(u)\big)_{1 \leq i \leq \mu_b, 1 \leq j \leq \mu_a},\label{fba}
\end{align}
are $\mu_a \times \mu_a$,
$\mu_a \times \mu_b$
and  $\mu_b \times\mu_a$ matrices, respectively, for all $1\le a\le m+1$ in (\ref{da})
and all $1\le a<b\le m+1$ in (\ref{eab}) and (\ref{fba}).

For all $1\leq a\leq m+1$, define the $\mu_a\times\mu_a$ matrix
$D_a^{\prime}(u)=\big(D_{a;i,j}^{\prime}(u)\big)_{1\leq i,j\leq \mu_a}$ by
\begin{equation*}
D_a^{\prime}(u):=\big(D_a(u)\big)^{-1}.
\end{equation*}
The entries of these matrices are expanded into power series
\begin{eqnarray*}
D_{a;i,j}(u) = \sum_{r \geq 0} D_{a;i,j}^{(r)} u^{-r}, 
E_{a,b;i,j}(u) = \sum_{r \geq 1} E_{a,b;i,j}^{(r)} u^{-r},\\
D_{a;i,j}^{\prime}(u) = \sum_{r \geq 0} D^{\prime(r)}_{a;i,j} u^{-r},
F_{b,a;i,j}(u) = \sum_{r \geq 1} F_{b,a;i,j}^{(r)} u^{-r}.
\end{eqnarray*}
Moreover, for $1\leq a\leq n$, we set
\begin{eqnarray*}
E_{a;i,j}(u) :=& E_{a,a+1;i,j}(u)=\sum_{r \geq 1} E_{a;i,j}^{(r)} u^{-r},\\
F_{a;i,j}(u) :=& F_{a+1,a;i,j}(u)=\sum_{r \geq 1} F_{a;i,j}^{(r)} u^{-r}.
\end{eqnarray*}

\begin{proposition}\cite[Theorem 4]{Pe1}\label{Per} The super Yangian $Y_{1|n}$ is generated by the elements \begin{align*}
&\lbrace D_{a;i,j}^{(r)}, D_{a;i,j}^{\prime(r)} \,|\, 1\leq a\leq n+1, 1\leq i,j\leq \mu_a, r\geq 0\rbrace,\\
&\lbrace E_{a;i,j}^{(r)} \,|\, 1\leq a \leq n, 1\leq i\leq \mu_a, 1\leq j\leq\mu_{a+1}, r\geq 1\rbrace,\\
&\lbrace F_{a;i,j}^{(r)} \,|\, 1\leq a \leq n, 1\leq i\leq\mu_{a+1}, 1\leq j\leq \mu_a, r\geq 1\rbrace,
\end{align*}
subject to certain relations depending on $\mu$.
\end{proposition}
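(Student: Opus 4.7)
The plan is to mimic the strategy Brundan--Kleshchev used for $Y_n$ in the purely even case, adapted to the super setting. Two things must be established: (i) the listed parabolic elements generate $Y_{1|n}$, and (ii) the relations they satisfy (to be spelled out case by case) form a complete defining set. For (i), I would invoke the Gauss decomposition $T(u)=F(u)D(u)E(u)$, which exists because the leading principal minors of $T(u)$ are invertible. Multiplying out expresses each RTT generator $t_{ij}(u)$ as a universal polynomial in the entries of $F(u),D(u),E(u)$, and conversely the matrix entries of these factors (together with $D'(u)$) are quasideterminants in the $t_{ij}^{(r)}$. Hence the two generating sets span the same subalgebra.

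For (ii), I would derive the relations by substituting the Gauss decomposition into the matrix form of the RTT relation
$$R(u-v)\,T_1(u)\,T_2(v) \;=\; T_2(v)\,T_1(u)\,R(u-v),$$
with $R(u)=1-P/u$ and $P$ the graded permutation. Equating block entries and isolating each Gauss factor by multiplying with inverses yields, after systematic manipulation, relations of all required types: internal $D_{a;i,j}$-relations (one mini-Yangian per diagonal block), mixed $D$-$E$ and $D$-$F$ relations, $E_{a;i,j}$-$F_{b;k,\ell}$ commutators expressible through the $D$'s, quadratic $E$-$E$ and $F$-$F$ relations (becoming anticommutators in the block involving the odd index $1$), and Serre-type relations across adjacent blocks. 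Because the parity convention forces $\tp(1)=0$ and $\tp(i)=1$ for $i\geq 2$, all special-case signs localize to the interactions with the first block of $\mu$; I would treat this block separately to keep the case analysis manageable.

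To show the listed relations are complete, I would put a loop filtration on the abstract superalgebra $\widetilde Y$ presented by these generators and relations, declaring $\deg D_{a;i,j}^{(r)}=\deg E_{a;i,j}^{(r)}=\deg F_{a;i,j}^{(r)}=r-1$. The relations force the associated graded $\gr^L\widetilde Y$ to be a quotient of $U(\gl_{1|n}[t])$, so ordered supermonomials in the parabolic generators span $\widetilde Y$. Composing the canonical surjection $\widetilde Y\twoheadrightarrow Y_{1|n}$ with the PBW theorem for $Y_{1|n}$ (Nazarov) then forces this surjection to be an isomorphism, matching the argument in Theorem \ref{UYiso}.

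The main obstacle I anticipate is the bookkeeping: translating the RTT relation block-by-block requires the full machinery of quasideterminant identities (Muir's identity, noncommutative row/column expansions), and every such identity must be reproved in the super setting with correct signs. In particular, extracting the closed-form $E$-$F$ commutation relation involves summing over inner block indices, and collecting signs coming from moving odd factors past the first block of $\mu$ is delicate; this is where most of the effort in \cite{Pe1} is concentrated, and any independent proof would have to navigate the same combinatorics.
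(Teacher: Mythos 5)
Your plan reproduces the Brundan--Kleshchev strategy as adapted to the super case in \cite{Pe1}: Gauss decomposition (with its quasideterminant bookkeeping) to show the parabolic elements generate, block-by-block translation of the RTT relation $R(u-v)T_1(u)T_2(v)=T_2(v)T_1(u)R(u-v)$ to derive the relations, and a loop-filtration argument played against the PBW theorem of $Y_{1|n}$ to establish completeness. This is exactly the route taken in \cite{Pe1}, which the present paper cites for this proposition rather than reproving it, so the proposal is correct and takes essentially the same approach.
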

Since the defining relations are exactly the relations (\ref{parashiftfirst})-(\ref{parashiftlast}) below when all $s_{i,j}=0$, we omit the relations here.

For example, the presentation of $Y_{1|n}$ introduced in \textsection 2  is the special case when $\mu=~(1^{n+1})$, where $D^{(r)}_{a;1,1}=D^{(r)}_a$, $E^{(r)}_{a;1,1}=E^{(r)}_a$ and $F^{(r)}_{a;1,1}=F^{(r)}_a$. The presentation depends on the composition $\mu$ and hence we will use the notation $Y_{\mu}=Y_{1|n}$ to emphasize the composition which we are using.

\subsection{Parabolic presentation of $Y_{1|n}(\sigma)$}
Throughout this subsection, we fix a shift matrix $\sigma=(s_{i,j})_{1\leq i,j\leq n+1}$. Also recall that we set $\mu_1=1$ for notational reason.
\begin{definition}
A composition $\mu=(\mu_1\,|\, \mu_2,\ldots,\mu_{m+1})$ of $(1|n)$ is said to be {\em admissible for} $\sigma$ if $s_{i,j}=0$ for all $\mu_1+\ldots+\mu_{a-1}+1\leq i,j\leq \mu_1+\ldots+\mu_a$, for each $1\leq a\leq m+1$.
\end{definition}
For example, the composition $\mu=(1^{n+1})$ is admissible for any shift matrix $\sigma$. We will use a shorthand notation from now on
\begin{equation}\label{sabmu}
s_{a,b}^{\mu}:=s_{\mu_1+\ldots+\mu_a,\mu_1+\ldots+\mu_b}.
\end{equation}
Note that one can recover the original matrix $\sigma$ if the admissible shape $\mu$ and the numbers $\{s_{a,b}^{\mu}|1\leq a,b\leq m+1\}$ are given. Assume from now on that a shift matrix $\sigma$ and an admissible shape $\mu$ for $\sigma$ are given.

\begin{definition}\label{parashift}
The shifted super Yangian of $\gl_{1|n}$ associated to $\sigma$ and $\mu$, 
denoted by $Y_{\mu}(\sigma)$, is the superalgebra 
over $\C$ generated by the following elements
\[\lbrace D_{a;i,j}^{(r)}| 1\leq a\leq m+1, 1\leq i,j\leq \mu_{a}, r\geq 0\rbrace,\]
\[\lbrace E_{a;i,j}^{(r)}| 1\leq a\leq m, 1\leq i\leq \mu_a, 1\leq j\leq \mu_{a+1} ,r> s_{a,a+1}^{\mu}\rbrace,\]
\[\lbrace F_{a;i,j}^{(r)}| 1\leq a\leq m, 1\leq i\leq \mu_{a+1 }, 1\leq j\leq \mu_{a} ,r> s_{a+1,a}^{\mu}\rbrace,\]
subject to the following defining relations:

\begin{eqnarray}\label{parashiftfirst}
D_{a;i,j}^{(0)}&=&\delta_{ij}\,,\\
\sum_{t=0}^{r}D_{a;i,p}^{(t)}D_{a;p,j}^{\prime (r-t)}&=&\delta_{r0}\delta_{ij}\,,\\
\big[D_{a;i,j}^{(r)},D_{b;h,k}^{(s)}\big]&=&
    \delta_{ab}(-1)^{\tp(a)}\sum_{t=0}^{min(r,s)-1}\big(D_{a;h,j}^{(t)}D_{a;i,k}^{(r+s-1-t)}-D_{a;h,j}^{(r+s-1-t)}D_{a;i,k}^{(t)}\big),\label{ddrel}
\end{eqnarray}
\begin{align}\notag
&\lefteqn{[D_{a;i,j}^{(r)}, E_{b;h,k}^{(s)}]=}\\
&\;\left\{
  \begin{array}{ll}
     \displaystyle   -\delta_{a,b}\delta_{h,j}\sum_{t=0}^{r-1} D_{a;i,p}^{(t)}E_{a;p,k}^{(r+s-1-t)}
         +\delta_{a,b+1}\sum_{t=0}^{r-1}D_{a;i,k}^{(t)}E_{b;h,j}^{(r+s-1-t)}, \;b \ne 1,\\[4mm]
     \displaystyle    \delta_{a,b}\delta_{h,j}\sum_{t=0}^{r-1} D_{a;i,p}^{(t)}E_{a;p,k}^{(r+s-1-t)}
          +\delta_{a,b+1}\sum_{t=0}^{r-1}D_{a;i,k}^{(t)}E_{b;h,j}^{(r+s-1-t)}, \;b = 1,
 \end{array}\right.\\[4mm]\notag
&\lefteqn{[D_{a;i,j}^{(r)}, F_{b;h,k}^{(s)}]=}\\
&\left\{
  \begin{array}{ll}
  \displaystyle \delta_{a,b}\delta_{k,i}\sum_{t=0}^{r-1}
  F_{b;h,p}^{(r+s-1-t)}D_{a;p,j}^{(t)}
  -\delta_{a,b+1}\sum_{t=0}^{r-1}F_{b;i,k}^{(r+s-1-t)}D_{a;h,j}^{(t)}, \;b \ne 1,\\[4mm]
   \displaystyle   -\delta_{a,b}\delta_{k,i}\sum_{t=0}^{r-1}F_{b;h,p}^{(r+s-1-t)}D_{a;p,j}^{(t)}
      -\delta_{a,b+1}\sum_{t=0}^{r-1}F_{b;i,k}^{(r+s-1-t)}D_{a;h,j}^{(t)}\,, \;b = 1,
\end{array}\right.
\end{align}
\begin{equation}
[E_{a;i,j}^{(r)}, E_{a;h,k}^{(s)}]=
 \left\{
  \begin{array}{ll}
    \displaystyle    -\sum_{t=1}^{s-1}E_{a;i,k}^{(t)}E_{a;h,j}^{(r+s-1-t)}
          +\sum_{t=1}^{r-1}E_{a;i,k}^{(t)}E_{a;h,j}^{(r+s-1-t)}\big),& a\neq 1,\\[4mm]
     \displaystyle     \sum_{t=1}^{r-1}E_{a;i,k}^{(t)}E_{a;h,j}^{(r+s-1-t)}
          -\sum_{t=1}^{s-1}E_{a;i,k}^{(t)}E_{a;h,j}^{(r+s-1-t)} ,& a=1,\\[4mm]
  \end{array}\right.\\
\end{equation}
\begin{equation}
[F_{a;i,j}^{(r)}, F_{a;h,k}^{(s)}]=
\left\{
  \begin{array}{ll}
   \displaystyle -\sum_{t=1}^{r-1}F_{a;i,k}^{(r+s-1-t)}F_{a;h,j}^{(t)}
   +\sum_{t=1}^{s-1}F_{a;i,k}^{(r+s-1-t)}F_{a;h,j}^{(t)}\big),& a\neq 1,\\
   \displaystyle \sum_{t=1}^{r-1}F_{a;i,k}^{(r+s-1-t)}F_{a;h,j}^{(t)}-
   \sum_{t=1}^{s-1}F_{a;i,k}^{(r+s-1-t)}F_{a;h,j}^{(t)} \,,& a=1,\\
  \end{array}\right.\\
\end{equation}
\begin{equation}\label{ef=dd}
[E_{a;i,j}^{(r)}, F_{b;h,k}^{(s)}]=
         \delta_{a,b} \sum_{t=0}^{r+s-1}D_{a+1;h,j}^{(r+s-1-t)} D^{\prime (t)}_{a;i,k}\,,
\end{equation}
\begin{align}  \label{eedeg}
&[E_{a;i,j}^{(r+1)}, E_{a+1;h,k}^{(s)}]-[E_{a;i,j}^{(r)}, E_{a+1;h,k}^{(s+1)}]
=-\delta_{h,j}E_{a;i,q}^{(r)}E_{a+1;q,k}^{(s)}\,,\\[3mm]\label{ffdeg}
&[F_{a;i,j}^{(r+1)}, F_{a+1;h,k}^{(s)}]-[F_{a;i,j}^{(r)}, F_{a+1;h,k}^{(s+1)}]
=\delta_{i,k}F_{a+1;h,q}^{(s)}F_{a;q,j}^{(r)}\,,\\[3mm]\label{ee0}
&[E_{a;i,j}^{(r)}, E_{b;h,k}^{(s)}] = 0
\quad\text{\;\;if\;\; $b>a+1$ \;\;or\;\; \;if\;\;$b=a+1$ and $h \neq j$},\\[3mm]\label{ff0}
&[F_{a;i,j}^{(r)}, F_{b;h,k}^{(s)}] = 0
\quad\text{\;\;if\;\; $b>a+1$ \;\;or\;\; \;if\;\;$b=a+1$ and $i \neq k$},
\end{align}
\begin{eqnarray}\label{eee}
&\big[E_{a;i,j}^{(r)},[E_{a;h,k}^{(s)},E_{b;f,g}^{(l)}]\big]+
\big[E_{a;i,j}^{(s)},[E_{a;h,k}^{(r)},E_{b;f,g}^{(l)}]\big]=0, &|a-b|\geq 1,\\[3mm]\label{fff}
&\big[F_{a;i,j}^{(r)},[F_{a;h,k}^{(s)},F_{b;f,g}^{(l)}]\big]+
\big[F_{a;i,j}^{(s)},[F_{a;h,k}^{(r)},F_{b;f,g}^{(l)}]\big]=0, &|a-b|\geq 1,\label{parashiftlast}
\end{eqnarray}
for all admissible $a,b,f,g,h,i,j,k,l,r,s,t$,
where the index $p$ (respectively, $q$) is summed over 1,$\ldots,\mu_a$ (respectively, $1,\ldots,\mu_{a+1}$).
Similarly, the only odd generators are \[\{E_{1;i,j}^{(r)}| 1\leq i\leq \mu_1, 1\leq j\leq \mu_2, r> s_{1,2}^\mu\}\cup \{F_{1;i,j}^{(r)}| 1\leq i\leq \mu_2, 1\leq j\leq \mu_1, r> s_{2,1}^\mu\}.\]
\end{definition}

Let $\Gamma$ denote the homomorphism $Y_{\mu}(\sigma)\longrightarrow Y_{\mu}$ sending the generators
$D^{(r)}_{a;i,j}$, $D^{\prime (r)}_{a;i,j}$, $E^{(r)}_{a;i,j}$ and $F^{(r)}_{a;i,j}$ in $Y_{\mu}(\sigma)$ to those in $Y_{\mu}$ with the same notations obtained by Gauss decomposition. We now prove that the map $\Gamma$ is injective and its image is independent of the choice of the admissible shape $\mu$. In particular, $Y_{\mu}(\sigma)$ can be identified with the super Yangian $Y_{1|n}(\sigma)$ introduced in \textsection 2 which is the special case when $\mu=(1^{n+1})$ of our current definition.

 For $1\leq a<b\leq m+1$, $1\leq i\leq \mu_a$, $1\leq j\leq \mu_b$, $r>s_{a,b}^{\mu}$ and a choice of $1\leq k\leq \mu_{b-1}$, we define the elements $E_{a,b;i,j}^{(r)}\in Y_{1|n}(\sigma)$ recursively by
\begin{equation}\label{eparag}
E_{a,a+1;i,j}^{(r)}:= E_{a;i,j}^{(r)}, \qquad E_{a,b;i,j}^{(r)}:=-[E_{a,b-1;i,k}^{(r-s_{b,b-1}^{\mu})}, E_{b-1;k,j}^{(s_{b-1,b}^{\mu}+1)}].
\end{equation}
Similarly, for $1\leq a<b\leq m+1$, $1\leq i\leq \mu_b$, $1\leq j\leq \mu_a$, $r>s_{b,a}^{\mu}$ and a choice of $1\leq k\leq \mu_{b-1}$, we define the elements $F_{b,a;i,j}^{(r)}\in Y_{1|n}(\sigma)$ by
\begin{equation}\label{fparag}
F_{a+1,a;i,j}^{(r)}:= F_{a;i,j}^{(r)}, \qquad F_{b,a;i,j}^{(r)}:=-[F_{b-1;i,k}^{(s_{b,b-1}^{\mu}+1)}, F_{b-1,a;k,j}^{(r-s_{b,b-1}^{\mu})}],
\end{equation}
It turns out that the above definitions are independent of the choice of $k$; see \cite[(6.9)]{BK1} for detail.

Similar to \textsection 2, we introduce the $loop$ $filtration$ on $Y_{\mu}(\sigma)$
\begin{equation}\notag
L_0 Y_{\mu}(\sigma) \subseteq L_1 Y_{\mu}(\sigma) \subseteq L_2 Y_{\mu}(\sigma) \subseteq \cdots
\end{equation}
by setting the degree of the generators $D_{a;i,j}^{(r)}$, $E_{a;i,j}^{(r)}$, and $F_{a;i,j}^{(r)}$ to be $(r-1)$ and $L_k Y_{\mu}(\sigma)$ to be the span of all supermonomials in the generators of total degree $\leq k$ and denote the associated graded algebra by $\gr^LY_{\mu}(\sigma)$.

Define the elements $\left\lbrace e_{i,j;r}\right\rbrace_{1\leq i,j\leq n+1, r\geq s_{i,j}}$ in $\gr^LY_{\mu}(\sigma)$ by
\begin{eqnarray}
e_{\mu_1+\cdots+\mu_{a-1}+i,\mu_1+\cdots+\mu_{a-1}+j;r}=\gr^L_{r} D_{a;i,j}^{(r+1)}\;,\\
e_{\mu_1+\cdots+\mu_{a-1}+i,\mu_1+\cdots+\mu_{b-1}+j;r}=\gr^L_{r} E_{a,b;i,j}^{(r+1)}\;,\\
e_{\mu_1+\cdots+\mu_{b-1}+i,\mu_1+\cdots+\mu_{a-1}+j;r}=\gr^L_{r} F_{b,a;i,j}^{(r+1)}\;.
\end{eqnarray}
One can prove that these elements satisfy relation (\ref{s-inj}) as well. As a result, there exists a surjective superalgebra homomorphism $\gamma:U\big(\gl_{1|n}[t](\sigma)\big)\twoheadrightarrow \gr^LY_{\mu}(\sigma)$ such that $\gamma(e_{i,j}t^r)=(-1)^{\tp(i)}e_{i,j;r}$. By the PBW theorem for $\gl_{1|n}[t](\sigma)$, $\gamma$ is also injective and hence an isomorphism.

Let $Y_{\mu}^0$ denote the subalgebra of $Y_{1|n}(\sigma)$ generated by all the $D_{a;i,j}^{(r)}$'s , $Y_{\mu}^+(\sigma)$ denote the subalgebra generated by all the $E_{a;i,j}^{(r)}$'s and $Y_{\mu}^-(\sigma)$ denote the subalgebra generated by all the $F_{a;i,j}^{(r)}$'s. The following corollaries are parallel to Corollary \ref{pbw1} and Corollary \ref{iso1}.


\begin{corollary}\label{pbw2}
(1) The set of monomials in the elements
$\{D_{a;i,j}^{(r)}\}_{a=1,\dots,m+1, 1 \leq i,j \leq \mu_a,r>0}$
taken in some
fixed order forms a basis for $Y_\mu^0$.\\
(2) The set of supermonomials in the elements
$\{E_{a,b;i,j}^{(r)}\}_{1 \leq a < b \leq m+1, 1 \leq i \leq \mu_a, 1 \leq j \leq \mu_b, r >s_{a,b}^{\mu}}$ taken in some fixed
order forms a basis for $Y_\mu^+(\sigma)$.\\
(3) The set of supermonomials in the elements
$\{F_{b,a;i,j}^{(r)}\}_{1 \leq a < b \leq m+1,
1 \leq i \leq \mu_b, 1 \leq j \leq \mu_a, r > s_{b,a}^{\mu}}$ taken in some fixed
order forms a basis for $Y_\mu^-(\sigma)$.\\
(4) The set of supermonomials in the union of
the elements listed in (1)--(3)
taken in some fixed order forms a basis for $Y_{\mu}(\sigma)$.
\end{corollary}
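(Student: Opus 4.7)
The plan is to reduce all four parts to the graded isomorphism $\gamma\colon U\bigl(\gl_{1|n}[t](\sigma)\bigr) \longrightarrow \gr^L Y_\mu(\sigma)$ established in the paragraph just before the statement, and then apply the standard filtered-to-graded lifting principle: if elements $\{x_\alpha\}$ of a filtered superalgebra $A$ have principal symbols forming a basis of $\gr A$, then ordered supermonomials (with odd generators appearing to powers $\leq 1$) in $\{x_\alpha\}$, taken in any fixed total order on the indexing set, form a basis of $A$ itself. This is a routine induction on filtration degree.

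For part (4), I would first invoke the PBW theorem for the Lie superalgebra $\gl_{1|n}[t](\sigma)$: ordered supermonomials in the basis $\{e_{i,j}t^r \mid 1\leq i,j\leq n+1,\ r\geq s_{i,j}\}$ form a basis of its universal enveloping algebra. By the definition of the parabolic elements (\ref{eparag})--(\ref{fparag}) and of the symbols $e_{i,j;r}$, the isomorphism $\gamma$ sends this PBW basis (up to signs $(-1)^{\tp(i)}$) onto ordered supermonomials in the principal symbols of $D_{a;i,j}^{(r)}$, $E_{a,b;i,j}^{(r)}$, and $F_{b,a;i,j}^{(r)}$. These therefore constitute a basis of $\gr^L Y_\mu(\sigma)$, and the lifting principle upgrades this to a PBW basis of $Y_\mu(\sigma)$, giving (4).

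For (1)--(3), I would apply the same argument to each of the three subalgebras $Y_\mu^0$, $Y_\mu^+(\sigma)$, $Y_\mu^-(\sigma)$. Under $\gamma$, their associated graded algebras are identified with the universal enveloping algebras of the Lie sub(super)algebras of $\gl_{1|n}[t](\sigma)$ spanned, respectively, by the diagonal-block, strictly upper-block-triangular, and strictly lower-block-triangular basis vectors $e_{i,j}t^r$ (with $r \geq s_{i,j}$). Each of these is itself a graded Lie superalgebra, so its own PBW theorem produces an appropriate basis of the associated graded, and lifting yields the stated bases.

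The one bookkeeping point that needs care is that the nested-commutator elements $E_{a,b;i,j}^{(r+1)}$ and $F_{b,a;i,j}^{(r+1)}$ really do carry the principal symbols $e_{\mu_1+\cdots+\mu_{a-1}+i,\,\mu_1+\cdots+\mu_{b-1}+j;\,r}$ and its $F$-analog, independently of the auxiliary index $k$ used in the recursion. At the graded level this is forced by the bracket identity (\ref{s-inj}) applied iteratively, matching the corresponding iterated-bracket expression for $e_{i,j}t^r$ in $\gl_{1|n}[t](\sigma)$; this is exactly the $k$-independence that the text already flags. Once this identification is in hand, the corollary is a direct consequence of the PBW theorem for $\gl_{1|n}[t](\sigma)$ and the filtered-to-graded lifting principle.
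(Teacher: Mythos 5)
Your proposal is correct and follows essentially the same route as the paper: the text proves Corollary \ref{pbw2} by direct analogy with Corollary \ref{pbw1}, which invokes the graded isomorphism $\gamma\colon U\bigl(\gl_{1|n}[t](\sigma)\bigr)\xrightarrow{\ \sim\ }\gr^L Y_\mu(\sigma)$ together with the PBW theorem for $U\bigl(\gl_{1|n}[t](\sigma)\bigr)$ for part (4), and handles (1)--(3) "similarly using (\ref{s-inj})." Your unpacking of the filtered-to-graded lifting, the identification of $\gr^L$ of each of the three subalgebras with the enveloping algebras of the diagonal/upper/lower triangular Lie sub-superalgebras, and the remark on $k$-independence of the nested-commutator symbols are exactly the details the paper elides; nothing new or problematic is introduced.
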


\begin{corollary}\label{iso2}
The multiplicative map  $Y_\mu^-(\sigma) \otimes Y_\mu^0 \otimes Y_\mu^+(\sigma)
\longrightarrow Y_{1|n}(\sigma)$ is an isomorphism of vector spaces.
\end{corollary}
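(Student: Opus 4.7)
The plan is to deduce Corollary \ref{iso2} directly from the PBW basis in Corollary \ref{pbw2}. The multiplicative map in question sends a pure tensor $f \otimes d \otimes e$ (with $f \in Y_\mu^-(\sigma)$, $d \in Y_\mu^0$, $e \in Y_\mu^+(\sigma)$) to the product $fde \in Y_\mu(\sigma)$, and this is clearly a well-defined linear map of vector spaces (no superalgebra structure on the tensor product is needed here, only the vector space structure). So it suffices to exhibit a basis of the domain that is carried bijectively to a basis of the codomain.

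First I would fix a total ordering on the full set of generators listed in Corollary \ref{pbw2}(1)--(3) in which every generator of $Y_\mu^-(\sigma)$ (i.e.\ every $F_{b,a;i,j}^{(r)}$) precedes every generator of $Y_\mu^0$ (i.e.\ every $D_{a;i,j}^{(r)}$), which in turn precedes every generator of $Y_\mu^+(\sigma)$ (i.e.\ every $E_{a,b;i,j}^{(r)}$). Within each of the three blocks I choose any convenient order, and I use those three induced orders for parts (1), (2), (3) of Corollary \ref{pbw2} respectively.

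With this choice, every ordered supermonomial appearing in the Corollary \ref{pbw2}(4) basis for $Y_\mu(\sigma)$ factors uniquely as a product $f \cdot d \cdot e$, where $f$ is an ordered supermonomial in the $F$-generators (hence an element of the PBW basis of $Y_\mu^-(\sigma)$ from part (3)), $d$ is an ordered monomial in the $D$-generators (hence an element of the PBW basis of $Y_\mu^0$ from part (1)), and $e$ is an ordered supermonomial in the $E$-generators (hence an element of the PBW basis of $Y_\mu^+(\sigma)$ from part (2)). Conversely, any such triple of basis elements multiplies to a single ordered supermonomial of the form listed in part (4). The correspondence $(f,d,e) \leftrightarrow fde$ is therefore a bijection between the natural tensor-product basis of $Y_\mu^-(\sigma) \otimes Y_\mu^0 \otimes Y_\mu^+(\sigma)$ and the PBW basis of $Y_\mu(\sigma)$, so the multiplication map is a linear isomorphism.

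There is no real obstacle here beyond being careful with the ordering convention: the substantive work was done in proving Corollary \ref{pbw2} (and in the identification of $\gr^L Y_\mu(\sigma)$ with $U(\gl_{1|n}[t](\sigma))$ via $\gamma$), and Corollary \ref{iso2} is simply the triangular-decomposition repackaging of that PBW theorem.
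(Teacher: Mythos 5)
Your argument is correct and is exactly the intended proof: the paper does not spell out the proof of Corollary \ref{iso2}, stating only that it is ``parallel to Corollary \ref{iso1},'' which in turn is the same repackaging of the PBW theorem. Choosing the total order in Corollary \ref{pbw2}(4) so that all $F$-generators precede all $D$-generators which precede all $E$-generators, and then matching the resulting factored basis to the tensor-product basis, is precisely the standard triangular-decomposition argument the author has in mind.
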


Now we prove that the definition of $Y_{\mu}(\sigma)$ is independent 
of the choice of the admissible shape $\mu$. If $\mu_i=1$ for all $i$, 
then $Y_{\mu}(\sigma)$ is defined as in \textsection 2. Suppose that $\mu_b>1$ for some $2\leq b\leq m+1$. 
We may further assume that $\mu_b=\alpha+\beta$ for some $\alpha, \beta\geq 1$. 
Define a finer composition $\nu$ of $(1|n)$ by
\[
\nu=(\mu_1\,|\, \mu_2,\ldots,\mu_{b-1},\alpha,\beta,\mu_{b+1},\ldots,\mu_{m+1}).
\]
Note that $\nu$ is also an admissible shape for $\sigma$. As a result, the matrix $T(u)$ has a Gauss decomposition with respect to $\nu$ as well; that is,
\[
T(u)= {^\mu E(u)} {^\mu D(u)} {^\mu F(u)}= {^\nu E(u)} {^\nu D(u)} {^\nu F(u)},
\]
where the matrices are block matrices as introduced in the beginning of this section.

We will denote by $^\mu D_a$ and $^\nu D_a$ to be the $a$-th diagonal matrices in $^\mu D(u)$ and $^\nu D(u)$ with respect to the compositions $\mu$ and $\nu$, respectively. Similarly, $^\mu E_a$ and $^\mu F_a$ are defined to be the matrices in the $a$-th upper and lower-diagonal of $^\mu E(u)$ and $^\mu F(u)$, respectively; $^\nu E_a$ and $^\nu F_a$ are defined to be the matrices in the $a$-th upper and lower-diagonal of $^\nu E(u)$ and $^\nu F(u)$, respectively.

\begin{lemma}\label{split}
In the notation above, define an $(\alpha \times \alpha)$-matrix
$A$, an $(\alpha\times \beta)$-matrix $B$,
a $(\beta \times \alpha)$-matrix $C$ and a $(\beta \times \beta)$-matrix $D$
from the equation
$$
{^\mu}D_b = \left(\begin{array}{ll}I_\alpha&0\\
C&I_\beta\end{array}
\right)\left(\begin{array}{ll}A&0\\
0&D\end{array}
\right)\left(\begin{array}{ll}I_\alpha&B\\
0&I_\beta\end{array}
\right).
$$
Then,
\begin{itemize}
\item[(i)] ${^\nu}D_a = {^\mu}D_a$ for $a < b$, ${^\nu}D_b = A$,
${^\nu}D_{b+1} = D$, and ${^\nu}D_c = {^\mu}D_{c-1}$ for $c > b+1$;
\item[(ii)] ${^\nu}E_a = {^\mu}E_a$ for $a < b-1$,
${^\nu}E_{b-1}$ is the submatrix consisting
of the first $\alpha$ columns
of
${^\mu}E_{b-1}$, ${^\nu}E_{b} = B$, ${^\nu}E_{b+1}$ is the submatrix consisting of the last $\beta$ rows
of ${^\mu}E_b$, and ${^\nu}E_c = {^\mu}E_{c-1}$ for $c > b+1$;
\item[(iii)] ${^\nu}F_a = {^\mu}F_a$ for $a < b-1$,
${^\nu}F_{b-1}$ is the submatrix consisting of the first
$\alpha$ rows  of
${^\mu}F_{b-1}$, ${^\nu}F_{b} = C$, ${^\mu}F_{b+1}$ is the submatrix
consisting of the last $\beta$ columns of
${^\mu}F_b$, and ${^\nu}F_c = {^\mu}F_{c-1}$ for $c > b+1$;
\end{itemize}
\end{lemma}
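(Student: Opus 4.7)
The plan is to exploit the uniqueness of the Gauss decomposition. Starting from the $\mu$-decomposition $T(u) = {}^{\mu}F(u) \cdot {}^{\mu}D(u) \cdot {}^{\mu}E(u)$, I substitute the given factorization of ${}^{\mu}D_{b}$ into its $b$-th diagonal block and regroup so as to exhibit $T(u)$ as a block LDU decomposition with respect to the refined composition $\nu$. Uniqueness then identifies the three factors with ${}^{\nu}F$, ${}^{\nu}D$, ${}^{\nu}E$, and every assertion of the lemma is read off from this identification.

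Concretely, enlarge the three $\mu_{b}\times\mu_{b}$ factors of ${}^{\mu}D_{b}$ to $(n+1)\times(n+1)$ matrices $\widetilde{F}, \widetilde{D}, \widetilde{E}$ by padding with identity blocks outside the $b$-th $\mu$-block. Under the refinement $\nu$, the matrix $\widetilde{F}$ is block lower-triangular with identity diagonal blocks (its only nontrivial off-diagonal entry is $C$ in the $\nu$-$(b{+}1,b)$ position), $\widetilde{D}$ is block diagonal with ${}^{\mu}D_{b}$ replaced by $A$ and $D$, and $\widetilde{E}$ is block upper-triangular (with $B$ in the $\nu$-$(b,b{+}1)$ position). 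Simultaneously, ${}^{\mu}F$ viewed as a $\nu$-block matrix is still block lower-triangular with identity diagonal: the old $I_{\mu_{b}}$ diagonal $\mu$-block sits on the $\nu$-diagonal as $\mathrm{diag}(I_\alpha,I_\beta)$ with a zero in the $\nu$-$(b,b{+}1)$ position. Since lower-triangular matrices with identity diagonal blocks are closed under multiplication, $({}^{\mu}F)\,\widetilde{F}$ is of the same type; dually $\widetilde{E}\,({}^{\mu}E)$ is block upper-triangular with identity diagonal under $\nu$. Therefore
\[
T(u) \;=\; \bigl({}^{\mu}F \cdot \widetilde{F}\bigr)\cdot \widetilde{D}\cdot \bigl(\widetilde{E}\cdot {}^{\mu}E\bigr)
\]
is a Gauss decomposition relative to $\nu$, and uniqueness yields ${}^{\nu}F = {}^{\mu}F\,\widetilde{F}$, ${}^{\nu}D = \widetilde{D}$, ${}^{\nu}E = \widetilde{E}\,{}^{\mu}E$.

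Claim (i) is then immediate from inspecting the diagonal blocks of $\widetilde{D}$. For claim (ii) one computes the super-diagonal $\nu$-blocks of $\widetilde{E}\cdot{}^{\mu}E$; at the $\nu$-block level this reads $(\widetilde{E}\cdot{}^{\mu}E)_{a,a+1} = ({}^{\mu}E)_{a,a+1} + \delta_{a,b}\,B\cdot({}^{\mu}E)_{b+1,a+1}$. Evaluating separately for $a<b-1$, $a=b-1$, $a=b$, $a=b+1$, and $a>b+1$, and using that the $\nu$-sub-blocks of ${}^{\mu}E_{b-1}$ (respectively ${}^{\mu}E_{b}$) are obtained by taking the first $\alpha$ versus last $\beta$ columns (respectively rows), yields each stated identification. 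Claim (iii) follows by the entirely analogous computation applied to ${}^{\mu}F\cdot\widetilde{F}$. The only real obstacle is bookkeeping: one must carefully track the index shift (block $b$ under $\mu$ becomes blocks $b$ and $b+1$ under $\nu$, with every later index raised by one) and observe that $I_{\mu_b}$, viewed as a $2\times 2$ block matrix under $\nu$, contributes the zero off-diagonal blocks that make the $(b,b+1)$ super-diagonal of ${}^{\mu}E$ vanish before the correction by $B$ is added.
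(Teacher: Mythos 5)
Your proof is correct and takes essentially the same route as the paper, whose entire proof is the single instruction ``Multiply matrices.'' Your argument fleshes out that instruction into its standard form: substitute the LDU factorization of ${}^{\mu}D_b$ into the $\mu$-Gauss decomposition of $T(u)$, observe that the regrouped product is block lower-unitriangular $\times$ block diagonal $\times$ block upper-unitriangular with respect to $\nu$, and invoke uniqueness of the Gauss decomposition to read off claims (i)--(iii).
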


\begin{proof}
Multiply matrices.
\end{proof}

To show the definition of $Y_\mu(\sigma)$ is independent the choice of $\mu$, it suffices to show that $Y_\nu(\sigma)=Y_\mu(\sigma)$. By Lemma \ref{split}, one has that $Y_{\nu}(\sigma)\subseteq Y_{\mu}(\sigma)$. Now the equality follows from the fact that the isomorphism $U\big(\gl_{1|n}[t](\sigma)\big)\cong \gr^L Y_{\mu}(\sigma)$ is independent of $\mu$.

\begin{proposition}\label{indmu}
The superalgebra $Y_{\mu}(\sigma)$ is independent of the choice of the admissible shape $\mu$.
\end{proposition}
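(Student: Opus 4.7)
First, I would reduce to comparing $\mu$ with a one-step refinement $\nu$. Any two admissible shapes for $\sigma$ share the finest admissible refinement $(1^{n+1})$, and one passes to that refinement by a chain of single-block splits. By transitivity, it suffices to establish $Y_\mu(\sigma) = Y_\nu(\sigma)$ when $\nu = (\mu_1 \,|\, \mu_2, \ldots, \mu_{b-1}, \alpha, \beta, \mu_{b+1}, \ldots, \mu_{m+1})$ is obtained from $\mu$ by splitting $\mu_b = \alpha + \beta$. The shape $\nu$ is again admissible for $\sigma$, because the full $b$-th block of $\sigma$ is already zero by admissibility of $\mu$, so the shift entries within either refined sub-block remain zero.

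\textbf{Containment via Lemma \ref{split}.} Next, I would verify the inclusion $Y_\nu(\sigma) \subseteq Y_\mu(\sigma)$ inside $Y_{1|n}$ using Lemma \ref{split}. For block indices $a \notin \{b-1,b,b+1\}$, the lemma identifies the $\nu$-generators literally with $\mu$-generators. For $a \in \{b-1,b+1\}$, the matrices ${}^\nu E_a$ and ${}^\nu F_a$ are submatrices of ${}^\mu E_{b-1}, {}^\mu E_b, {}^\mu F_{b-1}$, or ${}^\mu F_b$, so their entries lie in $Y_\mu(\sigma)$. The genuinely new matrices $A, B, C, D$ arising from the internal Gauss decomposition of ${}^\mu D_b$ are given by
\[
A = P,\qquad B = P^{-1} Q,\qquad C = R P^{-1},\qquad D = S - R P^{-1} Q,
\]
where ${}^\mu D_b(u)$ is partitioned in $(\alpha,\beta)$-block form as $\left(\begin{smallmatrix} P & Q \\ R & S \end{smallmatrix}\right)$. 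Because $P(u) = I_\alpha + O(u^{-1})$ has invertible leading term, $P(u)^{-1}$ is produced by a geometric-series expansion whose coefficients are noncommutative polynomials in the entries of $P(u)$; these entries are themselves $\mu$-generators $D_{b;i,j}^{(r)}$. Consequently all entries of $A, B, C, D$, and in turn all $\nu$-generators, lie in $Y_\mu(\sigma)$.

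\textbf{Upgrading to equality.} Finally, I would promote containment to equality through the loop filtration. The parabolic analog of Theorem \ref{UYiso}, sketched in the paragraph preceding Corollary \ref{pbw2}, furnishes a graded superalgebra isomorphism $\gamma_\mu: U(\gl_{1|n}[t](\sigma)) \xrightarrow{\sim} \gr^L Y_\mu(\sigma)$ for every admissible $\mu$, and likewise $\gamma_\nu$. Viewing both $\gr^L Y_\mu(\sigma)$ and $\gr^L Y_\nu(\sigma)$ as graded subalgebras of $\gr^L Y_{1|n}$, the top-degree components $e_{i,j;r}$ attached to the parabolic elements depend only on the ambient matrix $T(u)$ and not on $\mu$ --- at the graded level the Gauss decomposition degenerates to the direct block-entry assignment --- so the two subalgebras coincide inside $\gr^L Y_{1|n}$. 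The filtered inclusion $Y_\nu(\sigma) \hookrightarrow Y_\mu(\sigma)$ therefore induces equality on associated gradeds, and the usual filtered-to-graded lifting via the PBW basis of Corollary \ref{pbw2} forces $Y_\nu(\sigma) = Y_\mu(\sigma)$. I expect the main obstacle to be the containment step: one must carry out the formal-power-series Gauss decomposition carefully and confirm that the geometric-series inverse of the truncated block $P(u)$ really realizes every coefficient of $P(u)^{-1}$ as a noncommutative polynomial in entries of $P(u)$, after which the remaining bookkeeping is routine matrix algebra.
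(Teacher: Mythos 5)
Your proof follows the same strategy as the paper: reduce to a single one-step refinement $\nu$ of $\mu$, use Lemma~\ref{split} to obtain the containment $Y_\nu(\sigma)\subseteq Y_\mu(\sigma)$, and then promote containment to equality by observing that the graded isomorphism $U(\gl_{1|n}[t](\sigma))\cong\gr^L Y_\mu(\sigma)$ is independent of $\mu$. Your write-up is a correct and somewhat more explicit version of the paper's argument (the geometric-series inverse of $P(u)$ and the explicit $A,B,C,D$ bookkeeping are the details the paper leaves to "multiply matrices"), with no substantive difference in approach.
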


Using Lemma \ref{split} and induction on the length of the composition, one can describe the maps $\tau$ and $\iota$ in terms of parabolic generators. The anti-isomorphism $\tau$ satisfies
\begin{equation}
\tau(D_{a;i,j}^{(r)}) =
D_{a;j,i}^{(r)},\,\,\,
\tau(E_{a;i,j}^{(r)}) =
F_{a;j,i}^{(r)},\,\,\,
\tau(F_{a;i,j}^{(r)}) =
E_{a;j,i}^{(r)}.
\end{equation}

Also, in terms of the notation of (\ref{iotadef}), suppose that $\mu$ is an admissible shape for $\sigma$ and $\vec\sigma$ simultaneously, then the isomorphism $\iota:Y_{\mu}(\sigma)\rightarrow Y_{\mu}(\vec\sigma)$ satisfies
\begin{multline}
\quad\iota(D_{a;i,j}^{(r)}) =
\vec D_{a;i,j}^{(r)},\,\,\,
\iota(E_{a;i,j}^{(r)}) =
\vec E_{a;i,j}^{(r-s_{a,a+1}^\mu +\vec s_{a,a+1}^\mu )},\,\,\,
\iota(F_{a;i,j}^{(r)}) =
\vec F_{a;i,j}^{(r-s_{a+1,a}^\mu +\vec s_{a+1,a}^\mu )}.
\label{iotadefpara}
\end{multline}

\section{Baby comultiplications}
In this section, we define some comultiplication-like maps on $Y_{\mu}(\sigma)$ that are crucial in later sections. 

We first consider the special case where $\sigma=0$, i.e., the whole super Yangian $Y_{1|n}$. It is well-known (cf. \cite{Go}) that $Y_{1|n}$ is a Hopf superalgebra where the comultiplication $\Delta:Y_{1|n}\rightarrow Y_{1|n}\otimes Y_{1|n}$ is defined in terms of the RTT generators $\{t_{i,j}^{(r)}\}$ by the formula
\begin{equation}
\Delta(t_{i,j}^{(r)})=\sum_{s=0}^r\sum_{k=1}^{n+1}t_{i,k}^{(s)}\otimes t_{k,j}^{(r-s)}.
\end{equation}
Moreover, the evaluation homomorphism $\ev:Y_{1|n}\rightarrow U(\gl_{1|n})$ defined by
\begin{equation}
\ev(t_{i,j}^{(r)})=\delta_{r,0}\delta_{i,j}+\delta_{r,1}(-1)^{\tp(i)}e_{i,j},
\end{equation}
where $e_{i,j}$ is the elementary matrices in $\gl_{1|n}$, is surjective.

Define $\Delta_{\tilde{R}}:=(\text{id}\otimes \ev)\circ\Delta$ and $\Delta_{\tilde{L}}:=(\ev\otimes \text{id})\circ\Delta$, where they are both algebra homomorphisms. Thus we have
\begin{align}
\Delta_{\tilde{R}}:Y_{1|n}\rightarrow Y_{1|n}\otimes U(\gl_{1|n}),\qquad t_{i,j}^{(r)}\mapsto t_{i,j}^{(r)}\otimes 1+\sum_{k=1}^{n+1}(-1)^{\tp(k)}t_{i,k}^{(r-1)}\otimes e_{k,j}\\
\Delta_{\tilde{L}}:Y_{1|n}\rightarrow U(\gl_{1|n})\otimes Y_{1|n},\qquad t_{i,j}^{(r)}\mapsto 1\otimes t_{i,j}^{(r)} +\sum_{k=1}^{n+1}(-1)^{\tp(i)}e_{i,k}\otimes t_{k,j}^{(r-1)}.
\end{align}

In general, if $\sigma$ is not the zero matrix, then $Y_\mu(\sigma)$ 
does not have a comultiplication. But we may define some nice homomorphisms called 
{\em baby comultiplications} as in \cite{BK2}, suggested from the maps $\Delta_{\tilde{R}}$ and $\Delta_{\tilde{L}}$ above.

\begin{definition}
An admissible composition $\mu$ of $(1\,|\,n)$ for $\sigma$ is called {\em minimal admissible} if the length of $\mu$ is minimal.
\end{definition}

For example, $\mu=(1\,|\,2,1,1)$ is a minimal admissible shape of $(1|4)$ for the shift matrix
\[
\sigma=
\left[
 \begin{array}{c|cccc}
  0 & 0 & 0 & 1 & 1 \\
  \hline  
  0 & 0 & 0 & 1 & 2 \\
  0 & 0 & 0 & 1 & 2 \\
  2 & 2 & 2 & 0 & 1\\
  4 & 4 & 4 & 2 & 0
\end{array}
\right].
\]
Note that the minimal admissible shape denotes the ``sizes" of 
largest zero square matrices in the diagonal of $\sigma$, {\em respecting the parity} (so we use (1,2) rather than (3) in the northwestern corner).
Throughout this section, we fix a shift matrix $\sigma$ and 
a minimal admissible shape $\mu$ of $(1|n)$ for $\sigma$.

For convenience, we let $\beta=\mu_{m+1}$. Since $\mu$ is a minimal admissible shape for $\sigma$, we have that $1\leq \beta\leq n$ and either $s_{n+1-\beta,n+2-\beta}\neq 0$ or $s_{n+2-\beta,n+1-\beta}\neq 0$.

\begin{theorem}\label{baby1}
Let $\mu=(\mu_1\,|\,\mu_2,\ldots,\mu_{m+1})$ be a minimal admissible shape of $(1|n)$ for the shift matrix $\sigma$. For $1\leq i,j\leq \beta$, let
 \[
 \tilde e_{i,j}:=e_{i,j}+\delta_{i,j}(n-1-\beta)\in U(\gl_\beta).
 \]
\begin{itemize}
\item[(\it{1})] Suppose that $s_{n+1-\beta,n+2-\beta}\neq 0$. Define $\dot\sigma = (\dot s_{i,j})_{1 \leq i,j \leq n+1}$ by
     \begin{equation}\label{babyr1}
     \dot s_{i,j} = \left\{
     \begin{array}{ll}
      s_{i,j}-1&\hbox{if\, $i \leq n+1-\beta < j$,}\\
      s_{i,j}&\hbox{otherwise.}
      \end{array}\right.
      \end{equation}
     Then the map $\Delta_{R}:Y_{1|n}(\sigma) \rightarrow Y_{1|n}(\dot\sigma) \otimes U(\mathfrak{gl}_{\beta})$
     defined by
     \begin{align*}
       D_{a;i,j}^{(r)} &\mapsto \dot D_{a;i,j}^{(r)} \otimes 1
       - \delta_{a,m+1} \sum_{k=1}^{\beta} \dot D_{a;i,k}^{(r-1)} \otimes \tilde e_{k,j},\\
       E_{a;i,j}^{(r)} &\mapsto \dot E_{a;i,j}^{(r)} \otimes 1
       - \delta_{a,m} \sum_{k=1}^{\beta}\dot E_{a;i,k}^{(r-1)} \otimes \tilde e_{k,j},\\
       F_{a;i,j}^{(r)} & \mapsto \dot F_{a;i,j}^{(r)} \otimes 1,
      \end{align*}
       is a superalgebra homomorphism.\\[4mm]
\item[(\it{2})] Suppose that $s_{n+2-\beta,n+1-\beta}\neq 0$. Define $\dot\sigma = (\dot s_{i,j})_{1 \leq i,j \leq n+1}$ by
     \begin{equation}\label{babyl1}
     \dot s_{i,j} = \left\{
     \begin{array}{ll}
      s_{i,j}-1&\hbox{if\, $j \leq n+1-\beta < i$,}\\
      s_{i,j}&\hbox{otherwise.}
      \end{array}\right.
      \end{equation}
       Then the map $\Delta_{L}:Y_{1|n}(\sigma) \rightarrow U(\mathfrak{gl}_{\beta})\otimes Y_{1|n}(\dot\sigma)$
       defined by 
       \begin{align*}
       D_{a;i,j}^{(r)} &\mapsto 1\otimes\dot D_{a;i,j}^{(r)}
       - \delta_{a,m+1} \sum_{k=1}^{\beta} \tilde e_{i,k}\otimes \dot D_{a;k,j}^{(r-1)},\\
       E_{a;i,j}^{(r)} &\mapsto 1\otimes \dot E_{a;i,j}^{(r)} ,\\
       F_{a;i,j}^{(r)} & \mapsto 1\otimes \dot F_{a;i,j}^{(r)}-
       \delta_{a,m} \sum_{k=1}^{\beta} \tilde e_{i,k}\otimes \dot F_{a;k,j}^{(r-1)},
      \end{align*}
       is a superalgebra homomorphism.
       \end{itemize}
\end{theorem}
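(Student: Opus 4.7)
The plan is to prove (1) first; (2) follows by a parallel argument, or can be deduced from (1) by composing $\tau$ of (\ref{taudef}) with $\iota$ to exchange the roles of the upper and lower shift data. For (1), I reduce to the unshifted case $\sigma=0$, where $Y_{1|n}(\sigma)=Y_{1|n}$ carries the Hopf structure satisfying $\Delta(T(u)) = T(u)\otimes T(u)$, together with the evaluation homomorphism $\ev\colon Y_{1|n}\twoheadrightarrow U(\gl_{1|n})$. The composite $\Delta_{\tilde R} := (\id\otimes\ev)\circ\Delta$ is automatically a superalgebra homomorphism; the task is then to re-express it on parabolic generators and compose with a suitable further projection onto $U(\gl_\beta)$.

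The key step is to compute $\Delta_{\tilde R}$ on the parabolic generators $D_{a;i,j}^{(r)}$, $E_{a;i,j}^{(r)}$, $F_{a;i,j}^{(r)}$ using the Gauss decomposition $T(u) = F(u)D(u)E(u)$ together with the matrix identity $\Delta_{\tilde R}(T(u)) = T(u)\cdot(I + u^{-1}\mathcal{E})$, where $\mathcal{E}_{k,\ell} = (-1)^{\tp(k)}e_{k,\ell}$. Split the index set as $I\sqcup J = \{1,\ldots,n+1\}$ with $J=\{n+2-\beta,\ldots,n+1\}$, so that $U(\gl_\beta)$ identifies with the subalgebra of $U(\gl_{1|n})$ generated by the $\tilde e_{k,\ell}$ (suitably relabeled) for $k,\ell\in J$, and expand block-by-block. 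For generators with $a<m+1$ (resp.\ $a<m$) the corrections coming from $\mathcal{E}$ involve $e_{k,\ell}$ with $k$ or $\ell$ outside $J$; these contributions lie in the kernel of the Levi projection for the parabolic $\mathfrak{p}\subset\gl_{1|n}$ with Levi $\gl_{1|n-\beta}\oplus\gl_\beta$ followed by projection onto the $\gl_\beta$ factor, so they vanish. For $a=m+1$ on $D$ and $a=m$ on $E$, only the contribution with indices in $J$ survives, yielding the correction stated in the theorem; the $(n-1-\beta)$-shift in $\tilde e_{k,j} = e_{k,j}+\delta_{k,j}(n-1-\beta)$ arises from the contribution $\sum_{\ell\in I}(-1)^{\tp(\ell)}e_{\ell,\ell}$ when expanding the Gauss factor in the product $T(u)\cdot(I+u^{-1}\mathcal{E})$. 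The $F$ generators pick up no correction by the lower-block-triangular structure of $F(u)$, matching $F_{a;i,j}^{(r)}\mapsto\dot F_{a;i,j}^{(r)}\otimes 1$.

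The shifted case follows via the embedding $\Gamma\colon Y_{1|n}(\sigma)\hookrightarrow Y_{1|n}$ of Corollary~2.5: I check that $\Delta_{\tilde R}\circ\Gamma$, followed by projection onto $Y_{1|n}\otimes U(\gl_\beta)$, factors through $\Gamma(Y_{1|n}(\dot\sigma))\otimes U(\gl_\beta)$. The corrections decrement the upper index $r$ by one precisely for generators indexed by pairs $(i,j)$ with $i\in I$ and $j\in J$, matching the definition of $\dot\sigma$ in (\ref{babyr1}); the initial value $D_{a;i,j}^{(0)}=\delta_{ij}$ is preserved, and the condition $r>s_{m,m+1}^\mu$ for $E_{m;i,j}^{(r)}$ translates to $r-1>\dot s_{m,m+1}^\mu$ as required. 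Part (2) is proved analogously using the matrix identity $\Delta_{\tilde L}(T(u)) = (I + u^{-1}\mathcal{E}')\cdot T(u)$ with $\mathcal{E}'_{k,\ell} = (-1)^{\tp(\ell)}e_{k,\ell}$, the corrections now appearing on the left. The principal technical obstacle is the block-matrix bookkeeping in the Gauss decomposition, especially tracking the super signs from $\mathcal{E}$ and verifying the precise appearance of the diagonal shift $(n-1-\beta)$ in $\tilde e_{k,j}$.
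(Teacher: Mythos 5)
Your strategy is conceptually different from the paper's: the paper (following \cite[Theorem 4.2]{BK2}) simply verifies that $\Delta_R$ and $\Delta_L$ preserve every defining relation of Definition \ref{parashift}, whereas you try to \emph{derive} the formulas and the homomorphism property from the Hopf structure of $Y_{1|n}$ via $\Delta_{\tilde R}=(\id\otimes\ev)\circ\Delta$ and a ``further projection onto $U(\gl_\beta)$.'' This is an appealing idea, but there is a genuine gap at the heart of it. The natural projections $U(\gl_{1|n})\to U(\gl_\beta)$ you could invoke (PBW-projection, or killing the non-$\gl_\beta$ matrix units) are \emph{not} superalgebra homomorphisms on $U(\gl_{1|n})$; the only homomorphism of the required shape is the composite $U(\mathfrak{p})\twoheadrightarrow U(\mathfrak{l})\to U(\gl_\beta)$ through the parabolic with Levi $\gl_{1|n-\beta}\oplus\gl_\beta$, possibly twisted by a character of $U(\gl_{1|n-\beta})$. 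For that to be usable you must first establish that $\Delta_{\tilde R}(\Gamma(Y_{1|n}(\sigma)))\subseteq Y_{1|n}\otimes U(\mathfrak{p})$, i.e.\ that after restricting to the shifted subalgebra the $U(\gl_{1|n})$ leg of the correction never produces matrix units from the opposite nilradical. You describe a block-by-block expansion of the Gauss factorization of $T(u)\cdot(I+u^{-1}\mathcal{E})$ that is supposed to show exactly this, but $\mathcal{E}$ is not block-triangular, so multiplying $F(u)D(u)E(u)$ on the right by $I+u^{-1}\mathcal{E}$ does disturb the lower-unitriangular factor; the cancellation that makes $F_{a;i,j}^{(r)}\mapsto \dot F_{a;i,j}^{(r)}\otimes 1$ (no correction at all) hold is not a formal consequence of the block structure and must be proved. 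In short: without first proving that the image lands in $Y_{1|n}\otimes U(\mathfrak{p})$ (and separately nailing down where the diagonal shift $n-1-\beta$ in $\tilde e_{k,j}$ comes from, which you flag but do not resolve), the composite of $\Delta_{\tilde R}$ with ``projection onto $U(\gl_\beta)$'' is only a linear map, and you cannot conclude it is a superalgebra homomorphism.

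A secondary issue: even granting the above, you also need to show the image lies in the smaller tensor factor $\Gamma(Y_{1|n}(\dot\sigma))\otimes U(\gl_\beta)$ rather than merely $Y_{1|n}\otimes U(\gl_\beta)$, and that the $E_{m;i,j}^{(r)}$ with $r>s_{m,m+1}^\mu$ indeed map to combinations of $\dot E_{m;i,k}^{(r-1)}$ with $r-1>\dot s_{m,m+1}^\mu$. You mention this translation of degree bounds, but it again relies on the block-by-block Gauss bookkeeping that is the crux of the matter. The paper avoids all of this by checking the finitely many families of relations (\ref{parashiftfirst})--(\ref{parashiftlast}) directly on the putative images, and remarks that the Serre relations (\ref{eee})--(\ref{fff}) require several applications of (\ref{eedeg})--(\ref{ff0}). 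If you want to pursue your Hopf-theoretic route, the missing lemma to prove is precisely the inclusion of the image of $\Delta_{\tilde R}\circ\Gamma$ in $Y_{1|n}\otimes U(\mathfrak{p})$; without it, the argument does not establish well-definedness.
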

\begin{proof}
Check that $\Delta_{R}$ and $\Delta_{L}$ preserve the defining relations in Definition \ref{parashift}. Similar to \cite[Theorem 4.2]{BK2}, to check (\ref{eee}) and(\ref{fff}), one needs to use (\ref{eedeg}), (\ref{ffdeg}), (\ref{ee0}) and (\ref{ff0}) multiple times.
\end{proof}

\begin{remark}
To avoid possible confusion, in the above theorem and hereafter, the elements $D_{a;i,j}^{(r)}$, $E_{a;i,j}^{(r)}$, and $F_{a;i,j}^{(r)}$ in $Y_{1|n}(\dot{\sigma})$ are denoted by $\dot D_{a;i,j}^{(r)}$, $\dot E_{a;i,j}^{(r)}$, and $\dot F_{a;i,j}^{(r)}$, where $\dot{\sigma}$ is defined by either (\ref{babyr1}) or (\ref{babyl1}), with respect to the same composition $\mu$. Note that $\mu$ is admissible but may no longer be minimal with respect to $\dot{\sigma}$.
\end{remark}

The next lemma shows how to compute the baby comultiplications on the general parabolic elements $E_{a,b;i,j}^{(r)}$ and $F_{b,a;i,j}^{(r)}$ under the same minimal admissible shape.

\begin{lemma}\label{lema4.4}
\begin{itemize}
\item[(\it{1})] Under the same assumption of Theorem \ref{baby1}(1), for all admissible $i,j,r$ and $1\leq a< b-1<m+1$, we have
\begin{align*}
\Delta_{R}(F_{b,a;i,j}^{(r)}) &= \dot F_{b,a;i,j}^{(r)} \otimes 1,\\
\Delta_{R}(E_{a,b;i,j}^{(r)}) &= \!\dot E_{a,b;i,j}^{(r)} \otimes 1,\qquad \hbox{if $b < m+1$},
\end{align*} 
and
\begin{multline*}
\Delta_{R}(E_{a,m+1;i,j}^{(r)}) = -\big([\dot E_{a,m;i,h}^{(r-s_{m,m+1}^\mu)},\dot E_{m;h,j}^{(s_{m,m+1}^\mu +1)}]\otimes 1 \big) 
-\sum_{k=1}^{\beta}\dot E_{a,m+1;i,k}^{(r-1)} \otimes \tilde e_{k,j},
\end{multline*}
for any $1 \leq h \leq \mu_{m}$.
\item[(\it{2})] Under the same assumption of Theorem \ref{baby1}(2), for all admissible $i,j,r$ and $1\leq a< b-1<m+1$, we have
\begin{align*}
\Delta_{L}(E_{a,b;i,j}^{(r)}) &= 1 \otimes \dot E_{a,b;i,j}^{(r)} ,\\
\Delta_{L}(F_{b,a;i,j}^{(r)}) &= 1 \otimes \!\dot F_{b,a;i,j}^{(r)},\qquad \hbox{if $b < m+1$},
\end{align*} 
and
\begin{multline*}
\Delta_{L}(F_{m+1,a;i,j}^{(r)}) = -
\big( 1 \otimes [\dot F_{m;i,h}^{(s_{m+1,m}^\mu+1)},\dot F_{m,a;h,j}^{(r-s_{m+1,m}^\mu)}] \big)
-\sum_{k=1}^{\beta} \tilde e_{k,j} \otimes \dot F_{m+1,a;k,j}^{(r-1)},
\end{multline*}
for any $1 \leq h \leq \mu_{m}$.
\end{itemize}
\end{lemma}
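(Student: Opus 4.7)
The plan is to induct on the length $b-a$, using the recursive definitions (\ref{eparag}) and (\ref{fparag}) of the parabolic elements together with the fact that $\Delta_R$ and $\Delta_L$ are superalgebra homomorphisms. The base cases $b=a+1$ are the generator-level formulas defining $\Delta_R$ and $\Delta_L$ in Theorem \ref{baby1}.

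For part (\emph{1}), the formulas $\Delta_R(F_{b,a;i,j}^{(r)})=\dot F_{b,a;i,j}^{(r)}\otimes 1$ and $\Delta_R(E_{a,b;i,j}^{(r)})=\dot E_{a,b;i,j}^{(r)}\otimes 1$ for $b<m+1$ are immediate by induction. In the first case the recursion (\ref{fparag}) only involves $F$-generators, on which $\Delta_R$ attaches no tensor correction. In the second case the recursion (\ref{eparag}) involves $E$-generators with upper subscripts strictly less than $m$, so the $\delta_{a,m}$ term in $\Delta_R(E_{a;i,j}^{(r)})$ is never activated; moreover, $\dot s^\mu_{b,b-1}=s^\mu_{b,b-1}$ and $\dot s^\mu_{b-1,b}=s^\mu_{b-1,b}$ for these indices, so the bracket one obtains on the right matches the defining recursion of $\dot E_{a,b;i,j}^{(r)}$ in $Y_\mu(\dot\sigma)$.

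The interesting case is $\Delta_R(E_{a,m+1;i,j}^{(r)})$. Apply $\Delta_R$ to the recursion
\[
E_{a,m+1;i,j}^{(r)}=-[E_{a,m;i,h}^{(r-s_{m+1,m}^\mu)},E_{m;h,j}^{(s_{m,m+1}^\mu+1)}].
\]
The first factor falls under the already-settled case $b<m+1$ and is sent to $\dot E_{a,m;i,h}^{(r-s_{m+1,m}^\mu)}\otimes 1$. The second factor, being an $E$-generator with $a=m$, acquires a correction and is sent to $\dot E_{m;h,j}^{(s_{m,m+1}^\mu+1)}\otimes 1 - \sum_k \dot E_{m;h,k}^{(s_{m,m+1}^\mu)}\otimes \tilde e_{k,j}$. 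Using the tensor-product rule $[A\otimes 1,\,B\otimes C]=[A,B]\otimes C$ (which holds because the left factor has trivial second tensor slot, so no parity signs appear), the supercommutator splits into a $(\cdots)\otimes 1$ piece and a sum of pieces of the form $[\cdots,\cdots]\otimes\tilde e_{k,j}$. The $\otimes 1$ piece is the first summand displayed in the lemma. In the second piece, each inner bracket, reinterpreted in $Y_\mu(\dot\sigma)$, matches (up to a sign) the defining recursion for $\dot E_{a,m+1;i,k}$ once one notes that $\dot s^\mu_{m,m+1}=s^\mu_{m,m+1}-1$ and $\dot s^\mu_{m+1,m}=s^\mu_{m+1,m}$. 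The independence of $h$ in the final expression follows from the corresponding independence of the auxiliary index noted immediately after (\ref{fparag}).

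Part (\emph{2}) is handled by the entirely symmetric argument, with $\Delta_L$ leaving $E$-generators uncorrected (so the $E$- and interior-$F$-formulas reduce by induction) and the non-trivial case being $\Delta_L(F_{m+1,a;i,j}^{(r)})$, whose proof is the mirror of the case just completed. Alternatively, one may deduce part (\emph{2}) from part (\emph{1}) by invoking the anti-isomorphism $\tau$ of (\ref{taudef}), which exchanges $E$ with $F$ and $\sigma$ with $\sigma^t$, intertwining $\Delta_R$ with $\Delta_L$. The main obstacle throughout is the bookkeeping: correctly tracking the various shifts $s^\mu_{a,b}$ versus $\dot s^\mu_{a,b}$ so that the commutator appearing after applying the baby comultiplication is precisely recognized as a parabolic element of $Y_\mu(\dot\sigma)$ of the advertised degree.
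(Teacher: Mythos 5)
Your approach is essentially identical to the paper's: apply $\Delta_R$ (resp.\ $\Delta_L$) to the recursive definition of the general parabolic element, split the supercommutator in the tensor product using $[A\otimes 1,\,B\otimes C]=[A,B]\otimes C$, and recognize the correction term as a parabolic element of $Y_\mu(\dot\sigma)$ of one lower degree. The induction structure, the observation that interior $E$-factors and all $F$-factors carry no correction, and the role of $\dot s^\mu_{m,m+1}=s^\mu_{m,m+1}-1$ versus $\dot s^\mu_{m+1,m}=s^\mu_{m+1,m}$ are all as in the paper.

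The one slip is in the superscript of the first factor. You wrote the recursion as $E_{a,m+1;i,j}^{(r)}=-[E_{a,m;i,h}^{(r-s_{m+1,m}^\mu)},E_{m;h,j}^{(s_{m,m+1}^\mu+1)}]$, reading (\ref{eparag}) literally. But with that shift the inner bracket, reinterpreted in $Y_\mu(\dot\sigma)$, has superscript $r'$ determined by $r'-\dot s^\mu_{m+1,m}=r-s^\mu_{m+1,m}$, giving $r'=r$, not $r-1$; so the displayed formula of the lemma would not come out. In fact (\ref{eparag}) contains a typo: the exponent on $E_{a,b-1;i,k}$ should be $r-s_{b-1,b}^\mu$ rather than $r-s_{b,b-1}^\mu$, as one sees both from the loop-degree count $\bigl(r-s_{b-1,b}^\mu-1\bigr)+s_{b-1,b}^\mu=r-1$ and from the requirement that $r>s^\mu_{a,b}=s^\mu_{a,b-1}+s^\mu_{b-1,b}$ force $r-s^\mu_{b-1,b}>s^\mu_{a,b-1}$; it is also what the lemma statement and the paper's own proof use. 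With that correction the first factor becomes $E_{a,m;i,h}^{(r-s_{m,m+1}^\mu)}$, and your chain of identities then gives $[\dot E_{a,m;i,h}^{(r-s_{m,m+1}^\mu)},\dot E_{m;h,k}^{(s_{m,m+1}^\mu)}]=[\dot E_{a,m;i,h}^{((r-1)-\dot s_{m,m+1}^\mu)},\dot E_{m;h,k}^{(\dot s_{m,m+1}^\mu+1)}]=-\dot E_{a,m+1;i,k}^{(r-1)}$, as required.
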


\begin{proof}
We compute $\Delta_{R}(E_{a,m+1;i,j}^{(r)})$ for $1\leq a\leq m$ in detail here, while others are similar. By definition, \[E_{a,m+1;i,j}^{(r)}=-
[E_{a,m;i,h}^{(r-s_{m,m+1}^\mu)},E_{m;h,j}^{(s_{m,m+1}^\mu+1)}],\]
for any $1\leq h\leq\mu_{m}$. 
Also, $\Delta_{R}(E_{a,m;i,h}^{(r-s_{m,m+1}^\mu)})= 
\dot E_{a,m;i,h}^{(r-s_{m,m+1}^\mu)} \otimes 1$.
Hence
\begin{align*}
\Delta_{R}(E_{a,m+1;i,j}^{(r)})&=
-\left[\dot E_{a,m;i,h}^{(r-s_{m,m+1}^\mu)} \otimes 1,
\dot E_{m;h,j}^{(s_{m,m+1}^\mu+1)} \otimes 1 \right]\\
&\qquad +\left[ \dot E_{a,m;i,h}^{(r-s_{m,m+1}^\mu)} \otimes 1, 
\sum_{k=1}^{\beta}\dot E_{m;h,k}^{(s_{m,m+1}^\mu)} \otimes \tilde e_{k,j}\right]\\
&=
-\left[\dot E_{a,m;i,h}^{(r-s_{m,m+1}^\mu)},
\dot E_{m;h,j}^{(s_{m,m+1}^\mu+1)}\right ] \otimes 1 -\sum_{k=1}^{\beta}\dot E_{a,m+1;i,k}^{(r-1)} \otimes \tilde e_{k,j},
\end{align*}
as claimed.
\end{proof}

\begin{proposition}\label{babyinj}
The maps $\Delta_{R}$ and $\Delta_{L}$ are injective, whenever they are defined. 
\end{proposition}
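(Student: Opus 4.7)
The plan is to prove injectivity of $\Delta_R$ by passing to the associated graded with respect to a carefully chosen filtration; the case of $\Delta_L$ follows by the symmetric argument, or can be deduced from the $\Delta_R$ case via the anti-isomorphism $\tau$ of \S 2 combined with transposition of the shift matrix.

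First I would introduce an auxiliary filtration on $Y_{1|n}(\sigma)$ and $Y_{1|n}(\dot\sigma)$ by setting $\deg D^{(r)}_{a;i,j}=\deg E^{(r)}_{a;i,j}=\deg F^{(r)}_{a;i,j}=r$ (a $+1$ shift of the loop filtration of \S 2, corresponding to the Kazhdan-type filtration mentioned in the introduction). Equip $U(\gl_\beta)$ with the trivial filtration concentrated in degree $0$, and give the tensor product $Y_{1|n}(\dot\sigma)\otimes U(\gl_\beta)$ the product filtration $F_k:=L_kY_{1|n}(\dot\sigma)\otimes U(\gl_\beta)$. Inspection of the formulas in Theorem \ref{baby1} shows that both the principal term $\dot X^{(r)}\otimes 1$ (in degree $r$) and the correction term $\dot X^{(r-1)}\otimes \tilde e_{k,j}$ (in degree $r-1$, since $U(\gl_\beta)$ contributes nothing under the trivial filtration) lie in $F_r$. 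Hence $\Delta_R$ is a filtered superalgebra homomorphism.

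Next I would pass to associated gradeds. Using the defining relations of Definition \ref{parashift}, every commutator of two generators lies strictly below the naive degree bound in this shifted grading (because the right-hand side of each relation involves products of total degree exactly one less than the sum of the two generator degrees), so $\gr Y_{1|n}(\sigma)$ is supercommutative; a PBW-type argument as in Theorem \ref{UYiso}, adapted to the Kazhdan-shifted grading, identifies it canonically with the supersymmetric algebra $S(\gl_{1|n}[t](\sigma))$. Since the correction terms of $\Delta_R$ live in strictly lower filtration degree, they vanish upon taking associated graded, and the induced map reduces to
\[
\gr\Delta_R\colon S\bigl(\gl_{1|n}[t](\sigma)\bigr)\longrightarrow S\bigl(\gl_{1|n}[t](\dot\sigma)\bigr)\otimes U(\gl_\beta),\qquad x\longmapsto\iota_\ast(x)\otimes 1,
\]
where $\iota_\ast$ is the extension to symmetric algebras of the Lie subsuperalgebra inclusion $\gl_{1|n}[t](\sigma)\hookrightarrow\gl_{1|n}[t](\dot\sigma)$ (valid because $\dot s_{i,j}\leq s_{i,j}$ entry-wise, from (\ref{babyr1})). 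Both $\iota_\ast$ and the map $\cdot\otimes 1$ are manifestly injective, so $\gr\Delta_R$ is injective.

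The main obstacle in the plan is the claim that the Kazhdan-shifted filtration has supercommutative associated graded isomorphic to the supersymmetric algebra $S(\gl_{1|n}[t](\sigma))$; this identification is not stated explicitly in \S 2--3 of the paper, but the proof is a direct adaptation of the argument for Theorem \ref{UYiso}, using the fact that every right-hand side in the relations of Definition \ref{parashift} has total Kazhdan degree one less than the two generators on the left. Once this identification is in hand, injectivity of $\Delta_R$ follows from injectivity of $\gr\Delta_R$ by the standard argument for an exhaustive, bounded-below filtered map, and the analogous proof yields injectivity of $\Delta_L$.
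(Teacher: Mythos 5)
Your approach is genuinely different from the paper's, which uses a simple ``counit'' trick: setting $\epsilon:U(\gl_\beta)\to\C$, $\epsilon(\tilde e_{i,j})=0$, and $m(a\otimes b)=ab$, the paper verifies directly from the formulas in Theorem~\ref{baby1} that $m\circ(\id\otimes\epsilon)\circ\Delta_R$ (and the mirror composition for $\Delta_L$) equals the natural inclusion $Y_{1|n}(\sigma)\hookrightarrow Y_{1|n}(\dot\sigma)$, which is injective by Corollary~\ref{pbw2}. That argument is essentially one line and requires no filtration at all.

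Your filtered argument, while it contains a correct observation (that with $\deg X^{(r)}=r$ on the Yangian side and the \emph{trivial} filtration on $U(\gl_\beta)$, $\Delta_R$ is filtered), breaks down at the key assertion that
$\gr\Delta_R(x)=\iota_*(x)\otimes 1$. The algebra $\gr Y_{1|n}(\sigma)$ under the canonical filtration is free supercommutative on the \emph{full} family of elements $e_{a,b;i,j}^{(r)}$, including those with $|a-b|>1$ (Theorem~\ref{scanopbw}, Proposition~\ref{canopbw}); it is \emph{not} generated as an algebra by the nearest-neighbour elements $e_{a,a;i,j}^{(r)}$, $e_{a,a\pm1;i,j}^{(r)}$, because in the supercommutative $\gr$ the commutators that would produce the far-off generators vanish. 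So computing $\gr\Delta_R$ on $D^{(r)}_{a;i,j}$, $E^{(r)}_{a;i,j}$, $F^{(r)}_{a;i,j}$ (where the correction terms indeed drop) does \emph{not} determine $\gr\Delta_R$ on all of $S(\gl_{1|n}[t](\sigma))$. In fact for $E_{a,m+1;i,j}^{(r)}$ Lemma~\ref{lema4.4}(1) shows the image involves a commutator $-[\dot E_{a,m;i,h}^{(r-s)},\dot E_{m;h,j}^{(s+1)}]\otimes 1$ with $s=s_{m,m+1}^\mu>0$, which, after applying the shift relation (\ref{eedeg}) in $Y_{1|n}(\dot\sigma)$, equals $\dot E_{a,m+1;i,j}^{(r)}$ \emph{plus} products of lower-index $\dot E$'s of the same total canonical degree $r$. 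Those products survive in $\gr_r$. For example, with $n=2$, $\sigma$ having $s_{2,3}=1$ and all other off-diagonal entries $0$, $\mu=(1\,|\,1,1)$, $\beta=1$, one finds $\gr_r\Delta_R(E_{1,3}^{(r)})=(\dot e_{1,3}^{(r)}-\dot e_{1,2}^{(r-1)}\dot e_{2,3}^{(1)})\otimes 1$, which is not $\dot e_{1,3}^{(r)}\otimes 1$. So $\gr\Delta_R$ is \emph{not} the extension of the Lie subalgebra inclusion tensored with $1$.

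Injectivity of $\gr\Delta_R$ can still be salvaged (the image of the free generators is an upper-triangular ``substitution'' in the $\dot e$'s, hence still a system of algebraically independent elements), and this is essentially what the paper does carefully in Theorem~\ref{PBWSYLpara} at the truncated level, using the filtration in which $e_{ij}\in\gl_\beta$ has degree $1$ (Remark~\ref{injremark}), precisely so that the surviving correction term $\dot e_{a,m+1;i,k}^{(r-1)}\otimes x_{k,j}$ can be exploited. But as written, your proof asserts an identity for $\gr\Delta_R$ that is false, so the argument has a genuine gap. The paper's augmentation trick is both shorter and avoids this subtlety entirely.
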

\begin{proof}
Let $\beta$ be defined as in Theorem \ref{baby1} and let $\epsilon:U(\mathfrak{gl}_\beta)\rightarrow \C$ be the homomorphism such that $\epsilon(\tilde e_{i,j})=0$ for $1 \leq i,j \leq \beta$. By definition, $Y_{1|n}(\sigma)$ and $Y_{1|n}(\dot \sigma)$ are subalgebras of $Y_{1|n}$ with $Y_{1|n}(\sigma) \subseteq Y_{1|n}(\dot\sigma)$. One may check that the compositions $m\circ (\id \otimes \epsilon) \circ \Delta_{R}$ and $m\circ(\epsilon \otimes \id)\circ \Delta_{L}$ coincide with the natural embedding $Y_{1|n}(\sigma) \hookrightarrow Y_{1|n}(\dot\sigma)$, where $m(a\otimes b):=ab$ (the usual multiplication).
\end{proof}

\section{Canonical filtration}
Recall the canonical filtration of $Y_{1|n}$
\[
F_0Y_{1|n}\subseteq F_1Y_{1|n}\subset F_2Y_{1|n}\subseteq \cdots
\]
defined by deg $t_{ij}^{(r)}:=r$, i.e., $F_dY_{1|n}$ is the span of all supermonomials in the generators $t_{ij}^{(r)}$ of total degree $\leq$ d. It is clear form (\ref{Nadef}) that the associated graded superalgebra $\gr Y_{1|n}$ is supercommutative.

Now we describe the canonical filtration by parabolic presentation. Let $\mu=(\mu_1\,|\ldots,\mu_{m+1})$ be a composition of $(1|n)$. By \cite[Proposition 3.1]{Pe1}, the parabolic generators $D_{a;i,j}^{(r)}$ $E_{a,b;i,j}^{(r)}$ and $F_{b,a;i,j}^{(r)}$ of $Y_{\mu}=Y_{1|n}$ are linear combinations of supermonomials in $t_{i,j}^{(s)}$ of total degree $r$.

On the other hand, if we set $D_{a;i,j}^{(r)}$, $E_{a,b;i,j}^{(r)}$ and $F_{b,a;i,j}^{(r)}$ all to be of degree $r$, by multiplying out the matrix equation $T(u)=F(u)D(u)E(u)$, each $t_{ij}^{(r)}$ is a linear combination of supermonomials in the parabolic generators of total degree $r$.
Hence we may describe $F_dY_{1|n}$ as the span of all supermonomials in the parabolic generators $D_{a;i,j}^{(r)}$ $E_{a,b;i,j}^{(r)}$ and $F_{b,a;i,j}^{(r)}$ of total degree $\leq d$.

For $1\leq a,b\leq n+1$, $1\leq i\leq \mu_a$, $1\leq j\leq \mu_b$ and $r>0$, 
define the following elements in $\gr Y_{1|n}$ by
\begin{equation}\label{canorel}
e_{a,b;i,j}^{(r)} := \left\{
\begin{array}{ll}
\gr_r D_{a;i,j}^{(r)}&\hbox{if $a=b$,}\\
\gr_r E_{a,b;i,j}^{(r)}&\hbox{if $a < b$,}\\
\gr_r F_{a,b;i,j}^{(r)}&\hbox{if $a > b$.}
\end{array}
\right.
\end{equation}
Note that the notation depends on the shape $\mu$ implicitly. Since $\gr Y_{1|n}$ is supercommutative, together with Corollary \ref{pbw2} ($4$) in the case $\sigma=0$, we have the following:

\begin{proposition}\label{canopbw}
For any shape $\mu=(\mu_1\,|\,\ldots,\mu_{m+1})$, $\gr Y_{1|n}$ is the free supercommutative superalgebra on generators $\lbrace e_{a,b;i,j}^{(r)}\rbrace_{1\leq a,b\leq {m+1}, 1\leq i\leq \mu_a, 1\leq j\leq \mu_{b}, r>0}$.
\end{proposition}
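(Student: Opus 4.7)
The plan is to derive the proposition from two facts already in hand: the associated graded $\gr Y_{1|n}$ is supercommutative, and in every shape $\mu$ the parabolic generators furnish a PBW basis of $Y_{1|n}$. Supercommutativity is immediate from the RTT relation (\ref{Nadef}): the right-hand side is a sum of products of total degree $r+s-1$, so $[t_{ij}^{(r)}, t_{hk}^{(s)}] \in F_{r+s-1} Y_{1|n}$, and by induction on the length of supermonomials $[x,y] \in F_{a+b-1}Y_{1|n}$ whenever $x \in F_a Y_{1|n}$ and $y \in F_b Y_{1|n}$. The PBW statement is Corollary \ref{pbw2}(4) applied with $\sigma = 0$, which says that ordered supermonomials in the parabolic generators $D_{a;i,j}^{(r)}$, $E_{a,b;i,j}^{(r)}$, and $F_{b,a;i,j}^{(r)}$ form a basis of $Y_\mu = Y_{1|n}$.

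Combining these with the description of $F_d Y_{1|n}$ recorded just before the proposition (the span of supermonomials in parabolic generators of total degree $\leq d$), I would run a short induction on $d$ to show that $F_d Y_{1|n}$ is in fact spanned by \emph{ordered} supermonomials of total degree $\leq d$: any unordered supermonomial in $F_d Y_{1|n}$ differs from its canonical reordering by a sum of supercommutator corrections, each of which lies in $F_{d-1} Y_{1|n}$ by supercommutativity of the graded, so the induction hypothesis applies. In other words, the PBW basis of $Y_{1|n}$ restricts on each filtered piece $F_d Y_{1|n}$ to the subset of basis vectors of total degree $\leq d$.

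Finally, let $A$ denote the free supercommutative superalgebra on symbols $e_{a,b;i,j}^{(r)}$, graded by $\deg e_{a,b;i,j}^{(r)} = r$, with parities inherited from the standard basis of $\gl_{1|n}$. Formula (\ref{canorel}) extends to a graded superalgebra homomorphism $\phi : A \to \gr Y_{1|n}$, which is surjective by the spanning statement above. For injectivity, a homogeneous relation $\phi\bigl(\sum_\alpha c_\alpha M_\alpha\bigr) = 0$ with the $M_\alpha$ ordered supermonomials of a common total degree $d$ lifts to $\sum_\alpha c_\alpha \tilde M_\alpha \in F_{d-1} Y_{1|n}$, where the $\tilde M_\alpha$ are the corresponding PBW basis vectors of exact degree $d$; since $F_{d-1} Y_{1|n}$ is spanned by PBW basis vectors of degree $< d$ and the full PBW basis is linearly independent, every $c_\alpha$ vanishes. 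The only delicate step is the inductive upgrade from ``spanned by supermonomials'' to ``spanned by ordered supermonomials,'' but this is a routine filtered-algebra argument once supercommutativity of the graded is in hand; no real obstacle arises because the two inputs above already do the main work.
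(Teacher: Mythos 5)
Your argument is correct and follows the paper's own (very terse) proof, which simply cites the supercommutativity of $\gr Y_{1|n}$ together with Corollary \ref{pbw2}(4) in the case $\sigma=0$ and leaves the rest to the reader. The details you supply — the bubble-sort reduction of arbitrary supermonomials to ordered ones by induction on the filtration degree, and the lifting argument that turns linear independence of the PBW basis into injectivity of the map from the free supercommutative superalgebra — are exactly the routine filtered-algebra steps the paper leaves implicit.
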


Next we show that the shifted super Yangian adapts the canonical filtration as well. Let $\mu=(\mu_1\,|\ldots,\mu_{m+1})$  be a given admissible shape for a fixed shift matrix $\sigma$. Here we use the notations $Y_{\mu}$ for $Y_{1|n}$ and $Y_{\mu}(\sigma)$ for $Y_{1|n}(\sigma)$ to emphasize the shape $\mu$. 

Since $Y_{\mu}(\sigma)$ is a subalgebra of $Y_{\mu}$, we may induce the canonical filtration of $Y_{\mu}$ to $Y_{\mu}(\sigma)$ by defining $F_dY_{\mu}(\sigma):=F_dY_{\mu}\cap Y_{\mu}(\sigma)$. Therefore, the inclusion map $Y_{\mu}(\sigma)\hookrightarrow Y_{\mu}$ is a filtered map and the induced map $\gr Y_{\mu}(\sigma)\rightarrow \gr Y_{\mu}$ is injective as well. Hence we may identify $\gr Y_{\mu}(\sigma)$ with a subalgebra of the supercommutative superalgebra $\gr Y_{\mu}$. The next theorem describes this subalgebra explicitly.

\begin{theorem}\label{scanopbw}
For an admissible shape $\mu=(\mu_1\,|\ldots,\mu_{m+1})$, $\gr Y_{\mu}(\sigma)$ is the subalgebra of $\gr Y_{\mu}$ generated by the elements $\lbrace e_{a,b;i,j}^{(r)}\rbrace_{1\leq a,b\leq m+1, 1\leq i\leq \mu_a, 1\leq j\leq \mu_b, r>s_{a,b}^\mu}$.
\end{theorem}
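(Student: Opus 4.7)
My plan is to show that $\gr Y_\mu(\sigma)$ equals the subalgebra $B \subseteq \gr Y_\mu$ generated by the claimed elements, by matching PBW bases on the two sides. Since $\gr Y_\mu$ is free supercommutative on the full family $\{e_{a,b;i,j}^{(r)}\}_{r>0}$ by Proposition \ref{canopbw}, ordered supermonomials in the generators of $B$ already form a basis of $B$; my goal will be to show that these coincide with the images in $\gr Y_\mu$ of the PBW monomials of $Y_\mu(\sigma)$ furnished by Corollary \ref{pbw2}(4).

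The first step is to verify that each parabolic generator of $Y_\mu(\sigma)$ has canonical degree equal to its superscript, with the expected image in $\gr Y_\mu$. This is immediate for $D_{a;i,j}^{(r)}$, $E_{a;i,j}^{(r)}$, and $F_{a;i,j}^{(r)}$, which lie among the parabolic generators of $Y_\mu$ whose superscripts define the canonical filtration. For the compound elements $E_{a,b;i,j}^{(r)}$ and $F_{b,a;i,j}^{(r)}$ with $|b-a|>1$ defined recursively by (\ref{eparag}) and (\ref{fparag}), I would argue by induction on $|b-a|$: since $\gr Y_\mu$ is supercommutative, a commutator of elements of canonical degrees $p$ and $q$ lies in $F_{p+q-1}Y_\mu$, and the structural relations (\ref{eedeg})--(\ref{ff0}) should allow me to identify the surviving top-degree image with the correct $e_{a,b;i,j}^{(r)}$.

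With that in hand, each PBW monomial $m$ of $Y_\mu(\sigma)$ carries a canonical degree $d(m)$ equal to the sum of the superscripts of its factors, and its image $\bar m \in \gr_{d(m)} Y_\mu$ is an ordered supermonomial in the generators of $B$. The images $\{\bar m\}$ are linearly independent in $\gr Y_\mu$ by Proposition \ref{canopbw}, so a standard filtration argument promotes them to a basis of $\gr Y_\mu(\sigma)$. Concretely, for $x \in F_d Y_\mu(\sigma)$ expanded as $x = \sum c_m m$ in the PBW basis, only monomials with $d(m) \leq d$ can contribute (otherwise the highest-degree components could not cancel), and the image of $x$ in $\gr_d Y_\mu(\sigma)$ reads $\sum_{d(m) = d} c_m \bar m$, which lies in $B$; conversely $B \subseteq \gr Y_\mu(\sigma)$ since each generator of $B$ is realized as some $\bar m$. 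This yields $\gr Y_\mu(\sigma) = B$.

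The hard part will be the inductive step in the second paragraph. Because the shift matrix is in general not symmetric, the superscripts appearing in the defining commutator of $E_{a,b;i,j}^{(r)}$ and $F_{b,a;i,j}^{(r)}$ are unbalanced, so I must track carefully how supercommutativity of $\gr Y_\mu$ cancels the naive top-degree term and how the surviving contribution still reproduces $e_{a,b;i,j}^{(r)}$. This step is the direct super-analog of the corresponding argument in \cite{BK2}.
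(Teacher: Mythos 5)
Your proposal is correct and follows essentially the same route as the paper: identify the images of the compound parabolic generators in $\gr Y_\mu$ with the elements $e_{a,b;i,j}^{(r)}$ using the relations (\ref{eedeg}) and (\ref{ffdeg}), then combine the PBW basis from Corollary~\ref{pbw2}(4) with the freeness of $\gr Y_\mu$ from Proposition~\ref{canopbw}. The filtration argument you spell out in your third paragraph is exactly what the paper leaves implicit, and the hard inductive step you flag in your last paragraph is precisely what the paper compresses into a one-line appeal to (\ref{eedeg}) and (\ref{ffdeg}).
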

\begin{proof}
By relations (\ref{eedeg}) and (\ref{ffdeg}), the element $e_{a,b;i,j}^{(r)}$ of $\gr Y_{\mu}(\sigma)$ is identified with the element of the same notation in $\gr Y_{\mu}$. The theorem follows from Corollary \ref{pbw2} ($4$) and Proposition \ref{canopbw}.
\end{proof}

\begin{remark}\label{parascano}
One consequence of Theorem \ref{scanopbw} is that we may define the canonical filtration on $Y_{\mu}(\sigma)$ intrinsically by setting the elements $D_{a;i,j}^{(r)}$, $E_{a,b;i,j}^{(r)}$ and $F_{a,b;i,j}^{(r)}$ of $Y_{\mu}(\sigma)$ all to be degree $r$. This definition is independent of the choice of admissible shape $\mu$ by Proposition \ref{indmu}.
\end{remark}

\begin{remark}\label{injremark}
By definition, the comultiplication $\Delta:Y_{1|n}\rightarrow Y_{1|n}\otimes Y_{1|n}$ is a filtered map with respect to the canonical filtration. If we extend the canonical filtration of $Y_{1|n}(\dot{\sigma})$ to $Y_{1|n}(\dot{\sigma})\otimes U(\gl_{\beta})$ by declaring the degree of the matrix unit $e_{ij}\in\gl_{\beta}$ to be 1, then the baby comultiplications $\Delta_R$ and $\Delta_L$ in Theorem \ref{baby1} are filtered maps as well, provided that they are defined. Moreover, following the argument in Proposition \ref{babyinj}, we have that the associated graded maps $\gr\Delta_L$ and $\gr\Delta_R$ are injective.
\end{remark}

\section{Truncation}

Fix a shift matrix $\sigma=(s_{i,j})_{1\leq i,j \leq n+1}$. Choose an integer $\ell\geq s_{1,n+1}+s_{n+1,1}$, which will be called ``level'' later. For each $1\leq i\leq n+1$, set
\begin{equation}\label{defpi}
p_i:=\ell-s_{i,n+1}-s_{n+1,i}.
\end{equation}
This defines a tuple $(p_1,\ldots,p_{n+1})$ of integers such that $0\leq p_1\leq \cdots\leq p_{n+1}=\ell$.

Let $\mu=(\mu_1\,|\ldots,\mu_{m+1})$ be an admissible shape for $\sigma$. For each $1\leq a\leq m+1$, set
\begin{equation}\label{pamu}
 p_a^\mu:=p_{\mu_1+\ldots+\mu_a}.
\end{equation}
By definition, for each $1\leq a\leq m+1$,
we have $p_i=p_a^\mu$ for all $\mu_1+\cdots+\mu_{a-1}+1\leq i\leq \mu_1+\cdots+\mu_a$.

\begin{definition}
The $truncated$ $shifted$ $super$ $Yangian$ $of$ $level$ $\ell$, denoted by $Y_{1|n}^\ell(\sigma)$, is the quotient of $Y_{1|n}(\sigma)$ by the two-side ideal generated by the elements $\lbrace D_{1}^{(r)}\rbrace_{r>p_1}$.
\end{definition}

When $\sigma=0$, since $t_{11}^{(r)}=D_1^{(r)}$ for all $r$ and $p_1=\ell$, the above definition is exactly the super analogy of {\em Yangian of level $\ell$} due to Cherednik or the {\em truncated Yangian} in \cite{BK1}.

It should be clear from the context that we are dealing with $Y_{1|n}(\sigma)$ or $Y_{1|n}^\ell(\sigma)$ and hence we will use the same symbols $D_{a;i,j}^{(r)}$, $E_{a,b;i,j}^{(r)}$ and $F_{a,b;i,j}^{(r)}$ to denote the generators of $Y_{1|n}(\sigma)$ and their images in the quotient $Y_{1|n}^\ell(\sigma)$, by abusing notations.

It is clear that the anti-isomorphism $\tau$ defined in (\ref{taudef}) factors through the quotient and induces an anti-isomorphism
\begin{equation}
\tau:Y_{1|n}^\ell(\sigma)\rightarrow Y_{1|n}^\ell(\sigma^t).
\end{equation}
Similarly, let $\vec{\sigma}$ be another shift matrix satisfying that $\vec{s}_{i,i+1}+\vec{s}_{i+1,i}=s_{i,i+1}+s_{i+1,i}$ for all $1\leq i\leq n+1$. Then the isomorphism $\iota$ defined by (\ref{iotadef}) also induces an isomorphism
\begin{equation}\label{iotaiso}
\iota:Y_{1|n}^\ell(\sigma)\rightarrow Y_{1|n}^\ell(\vec{\sigma}).
\end{equation}

Consider the canonical filtration defined in \textsection 5. We obtain a filtration
\[
F_0Y_{1|n}^\ell(\sigma)\subseteq F_1Y_{1|n}^\ell(\sigma)\subseteq\cdots
\]
induced from the quotient map $Y_{1|n}(\sigma)\rightarrow Y_{1|n}^\ell(\sigma)$. By Remark \ref{parascano}, we may define directly that for a given admissible shape $\mu$ for $\sigma$, the elements $D_{a;i,j}^{(r)}$, $E_{a,b;i,j}^{(r)}$ and $F_{a,b;i,j}^{(r)}$ of $Y_{1|n}^\ell(\sigma)$ are all of degree $r$ and $F_dY_{1|n}^\ell(\sigma)$ is the span of all supermonomials in these elements of total degree $\leq d$.

For $1\leq a,b\leq m+1$, $1\leq i\leq \mu_a$, $1\leq j\leq \mu_b$ and $r>s_{a,b}^\mu$, define element $e_{a,b;i,j}^{(r)}$, by abusing notations again,  in the associative graded superalgebra $\gr Y_{1|n}^\ell(\sigma)$ according to exactly the same formula (\ref{canorel}), except that those $D$'s, $E$'s and $F$'s are now in the quotient. Since $\gr Y_{1|n}^\ell(\sigma)$ is a quotient of $\gr Y_{1|n}(\sigma)$, by Proposition~\ref{canopbw} and Theorem~\ref{scanopbw}, it is also supercommutative and is generated by the elements $\lbrace e_{a,b;i,j}^{(r)}\rbrace_{1\leq a,b\leq m+1, 1\leq i\leq\mu_a, 1\leq j\leq \mu_b, r>s_{a,b}^\mu}$. In fact, we only need  a finite set as a generating set of $\gr Y_{1|n}^\ell(\sigma)$, as suggested by the following lemma.

\begin{lemma}\label{lema6.2}
For any admissible shape $\mu=(\mu_1\,|\,\ldots,\mu_{m+1})$, $\gr Y_{1|n}^\ell(\sigma)$ is generated only by the elements $\lbrace e_{a,b;i,j}^{(r)} \rbrace_{1\leq a,b\leq m+1, 1\leq i\leq\mu_a, 1\leq j\leq \mu_b, s_{a,b}^\mu<r\leq s_{a,b}^\mu+p_{min(a,b)}^\mu}$.
\end{lemma}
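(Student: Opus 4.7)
The plan is to show that every $e_{a,b;i,j}^{(r)}$ with $r>s_{a,b}^\mu+p_{\min(a,b)}^\mu$ lies in the subalgebra of $\gr Y_{1|n}^\ell(\sigma)$ generated by the claimed finite set; combined with Theorem~\ref{scanopbw} this yields the lemma. Because $\gr Y_{1|n}^\ell(\sigma)$ is supercommutative, the task is to produce a polynomial expression in the specified generators for each such ``high-degree'' element, and it is equivalent to do so in the filtered algebra $Y_{1|n}^\ell(\sigma)$ modulo strictly lower canonical degree.

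The driving input is the defining truncation $D_{1;1,1}^{(r)}=0$ for $r>p_1^\mu=p_1$ (since $\mu_1=1$). Setting $r=p_1+1$ in the $D$--$E$ relation of Definition~\ref{parashift} with $a=b=i=1$ gives, for every admissible $j$ and $s>s_{1,2}^\mu$,
\[
0=\big[D_{1;1,1}^{(p_1+1)},E_{1;1,j}^{(s)}\big]=\sum_{t=0}^{p_1}D_{1;1,1}^{(t)}E_{1;1,j}^{(p_1+s-t)}.
\]
Solving for the $t=0$ summand and inducting downward on $s$ starting at $s=s_{1,2}^\mu+1$, one expresses every $E_{1;1,j}^{(r)}$ with $r>s_{1,2}^\mu+p_1^\mu$ as a polynomial in $\{D_{1;1,1}^{(t)}\}_{0<t\leq p_1}$ and $\{E_{1;1,j}^{(r')}\}_{s_{1,2}^\mu<r'\leq s_{1,2}^\mu+p_1^\mu}$; passing to the canonical graded algebra gives the desired expressibility of $e_{1,2;1,j}^{(r)}$. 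The symmetric use of the $D$--$F$ relation handles $e_{2,1;i,1}^{(r)}$.

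The propagation to higher blocks is by induction on $a$. The $E$--$F$ relation (\ref{ef=dd}) with $b=a$ and $k=i$ solves to
\[
D_{a+1;h,j}^{(r+s-1)}=[E_{a;i,j}^{(r)},F_{a;h,i}^{(s)}]-\sum_{t\geq 1}D_{a+1;h,j}^{(r+s-1-t)}D_{a;i,i}^{\prime(t)}.
\]
Once $E_{a;*,*}$ and $F_{a;*,*}$ are reduced, choosing $r,s$ with $r+s-1$ any desired value and invoking the key numerical identity $p_{a+1}^\mu=p_a^\mu+s_{a,a+1}^\mu+s_{a+1,a}^\mu$ (immediate from $p_i=\ell-s_{i,n+1}-s_{n+1,i}$ and (\ref{sijk})) reduces every $D_{a+1;h,j}^{(r)}$ with $r>p_{a+1}^\mu$. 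Taking $P:=D_{a+1;i,j}^{(p_{a+1}^\mu+1)}$, now realized as a polynomial in the good set, and computing $[P,E_{a+1;j,k}^{(s)}]$ against the expansion of the $D$--$E$ relation then plays the same role as the truncation did in block~$1$, reducing $E_{a+1;*,*}^{(r)}$ for $r>s_{a+1,a+2}^\mu+p_{a+1}^\mu$; the analogous step reduces $F_{a+1;*,*}$, completing the induction on $a$.

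Finally, the case $|a-b|>1$ is handled by the recursive definitions (\ref{eparag}) and (\ref{fparag}), which express $E_{a,b;i,j}^{(r)}$ and $F_{b,a;i,j}^{(r)}$ as nested commutators of adjacent-block generators whose expressibility has been established, while the additivity $s_{a,b}^\mu=s_{a,c}^\mu+s_{c,b}^\mu$ along monotone paths ensures that the ranges match precisely. The main obstacle throughout is the combinatorial bookkeeping of shift indices: one must check at every step of the induction that the bound arising from each defining relation is \emph{exactly} $s_{a,b}^\mu+p_{\min(a,b)}^\mu$, which relies on the delicate additivity properties of $\sigma$ and the explicit formula for $p_a^\mu$; granting these identities, the induction is systematic though tedious.
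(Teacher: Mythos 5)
Your proposal takes the same route the paper intends: the paper's own proof defers entirely to the relation-chasing argument of \cite[Lemma 6.1]{BK2}, and what you have written is essentially an unpacking of that argument adapted to the super/parabolic notation. The engine is the same: the truncation $D_{1;1,1}^{(r)}=0$ for $r>p_1$ starts an induction up the blocks, using the $D$--$E$ and $D$--$F$ relations to propagate into each $E_a$, $F_a$, the relation (\ref{ef=dd}) to jump to $D_{a+1}$, and the additivity of $\sigma$ together with $p_{a+1}^\mu-p_a^\mu=s^\mu_{a,a+1}+s^\mu_{a+1,a}$ to match the stated degree bounds; the $|a-b|>1$ case then comes for free from (\ref{eparag})--(\ref{fparag}). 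Your identity $0=\sum_{t=0}^{p_1}D_{1;1,1}^{(t)}E_{1;1,j}^{(p_1+s-t)}$ is correctly derived (note the $+$ sign in the $b=1$ branch of the $D$--$E$ relation), and your solved form of (\ref{ef=dd}) is also correct. Two points deserve slightly more care when you write this up. First, at a block $a+1>1$ the element $P:=D_{a+1;i,j}^{(p_{a+1}^\mu+1)}$ does \emph{not} vanish (unlike $D_1^{(p_1+1)}$), so the analogue of the block-$1$ step is not a literal repetition: you should say explicitly that, since the subalgebra $B$ generated by the claimed finite set is closed under brackets and you have already shown $P\in B$ and $E_{a+1;j,k}^{(s)}\in B$ in the relevant range, the commutator $[P,E_{a+1;j,k}^{(s)}]$ lies in $B$, and then solve the $t=0$ term out of the $D$--$E$ relation as you did for block $1$. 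Second, the whole argument should be organized as a \emph{double} induction (outer on $a$, inner on the degree $r$), since the solved-for term on each side of the relation always introduces strictly lower $r$; this is currently implicit in your ``inducting downward on $s$'' phrasing but should be stated once and for all. Granting those expository refinements, the proposal is correct and coincides with the paper's intended proof.
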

\begin{proof}
It can be proved by using the same argument as in \cite[Lemma 6.1]{BK2}, except that the induction argument starts from $\mu=(1\,|\,n)$ and our notations are slightly different.

\end{proof}

Suppose that the shift matrix $\sigma$ is nonzero and the level $\ell>0$. Let $\beta$ be the size of the maximal zero square matrix in the southeastern corner of $\sigma$. Hence we have $1\leq \beta\leq n$ and either $s_{n+1-\beta,n+2-\beta}\neq 0$ or $s_{n+2-\beta,n+1-\beta}\neq 0$.

If $s_{n+1-\beta,n+2-\beta}\neq 0$, then one may easily check that the baby comultiplication $\Delta_R$ defined in Theorem \ref{baby1} factors through the quotient to induce a map 
\begin{equation}\label{babylr}
\Delta_R:Y_{1|n}^\ell(\sigma)\rightarrow Y_{1|n}^{\ell-1}(\dot{\sigma})\otimes U(\gl_\beta),
\end{equation}
where $\dot{\sigma}$ is defined by (\ref{babyr1}). Similarly, if $s_{n+2-\beta,n+1-\beta}\neq 0$, then the baby comultiplication $\Delta_L$ defined in Theorem \ref{baby1} induces a map 
 \begin{equation}\label{babyll}
\Delta_L:Y_{1|n}^\ell(\sigma)\rightarrow U(\gl_\beta)\otimes Y_{1|n}^{\ell-1}(\dot{\sigma}),
\end{equation}
where $\dot{\sigma}$ is defined by (\ref{babyl1}). By Remark \ref{injremark}, $\Delta_R$ and $\Delta_L$ are filtered maps so they induce the following homomorphisms of graded superalgebras
\begin{align}\label{grdelr}
\gr \Delta_R: \gr Y_{1|n}^\ell(\sigma)\rightarrow \gr\big( Y_{1|n}^{\ell-1}(\dot{\sigma})\otimes U(\gl_\beta)\big),\\ \label{grdell}
\gr \Delta_L: \gr Y_{1|n}^\ell(\sigma)\rightarrow \gr\big(U(\gl_\beta)\otimes Y_{1|n}^{\ell-1}(\dot{\sigma})\big).
\end{align}

\begin{theorem}\label{PBWSYLpara}
For any admissible shape $\mu=(\mu_1\,|\,\ldots,\mu_{m+1})$, $\gr Y_{1|n}^\ell(\sigma)$ is the free supercommutative superalgebra on generators 
\[
\lbrace e_{a,b;i,j}^{(r)} \rbrace_{1\leq a,b\leq m+1, 1\leq i\leq\mu_a, 1\leq j\leq \mu_b, s_{a,b}^\mu<r\leq s_{a,b}^\mu+p_{min(a,b)}^\mu}.
\]
Also, the maps $\gr\Delta_{R}$ and $\gr\Delta_{L}$ in (\ref{grdelr}) and (\ref{grdell}) are injective whenever they are defined, and so are the maps $\Delta_R$ and $\Delta_L$ in (\ref{babylr}) and (\ref{babyll}).
\end{theorem}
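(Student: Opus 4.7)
The plan is to establish the three claims simultaneously by induction on $\ell$. Lemma \ref{lema6.2} already supplies the asserted finite generating set, so what remains is algebraic independence plus injectivity of $\gr\Delta_R, \gr\Delta_L$ and $\Delta_R, \Delta_L$; these feed each other in the induction.

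The base case $\ell = 0$ forces $\sigma = 0$ and every $p_i = 0$, so the proposed generating set is empty, $Y_{1|n}^0 = \mathbb{C}$, and all three claims hold vacuously.

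For the inductive step, assume the theorem at level $\ell - 1$. First suppose $\sigma \neq 0$. After possibly composing with the anti-isomorphism $\tau$ from (\ref{taudef}) to swap the roles of $\Delta_R$ and $\Delta_L$, we may assume $s_{n+1-\beta,n+2-\beta} \neq 0$, so that (\ref{babylr}) gives a map $\Delta_R: Y_{1|n}^\ell(\sigma) \to Y_{1|n}^{\ell-1}(\dot\sigma)\otimes U(\gl_\beta)$, filtered by Remark \ref{injremark}. Using Theorem \ref{baby1} and the iterated formulas of Lemma \ref{lema4.4}, I would compute $\gr\Delta_R$ on each proposed generator $e^{(r)}_{a,b;i,j}$: the top-degree contribution is $\dot e^{(r)}_{a,b;i,j}\otimes 1$ when $a,b$ stay within $1,\dots,m$, while for generators involving the last block ($a = m+1$ or $b = m+1$) an additional summand $\dot e^{(r-1)}_{\star} \otimes \tilde e_{k,j}$ of equal total degree appears. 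Combining the inductive PBW for $\gr Y_{1|n}^{\ell-1}(\dot\sigma)$ with the ordinary PBW of $U(\gl_\beta)$ produces an explicit PBW basis of the tensor product, with respect to which the images $\gr\Delta_R(e^{(r)}_{a,b;i,j})$ have distinguishable leading monomials and so are algebraically independent. This simultaneously yields the PBW for $\gr Y_{1|n}^\ell(\sigma)$ and the injectivity of $\gr\Delta_R$; injectivity of $\Delta_R$ itself follows by the standard argument that a nonzero $x \in \ker\Delta_R$ of minimal filtration degree would descend to a nonzero element of $\ker\gr\Delta_R$, a contradiction. The $\Delta_L$ side is handled symmetrically whenever $s_{n+2-\beta,n+1-\beta}\neq 0$.

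The remaining case $\sigma = 0$ with $\ell > 0$ has no baby comultiplication available; for it one invokes the PBW theorem for the ordinary truncated super Yangian $Y_{1|n}^\ell$, obtained by constructing a faithful representation on an $\ell$-fold tensor product of evaluation modules through $\ev^{\otimes\ell} \circ \Delta^{(\ell-1)}$ and then counting against the free supercommutative generators of $\gr Y_{1|n}$ supplied by Proposition \ref{canopbw}. The principal obstacle is the triangularity analysis inside the inductive step: one must track the interaction between the degree ranges $s^\mu_{a,b} < r \leq s^\mu_{a,b}+p^\mu_{\min(a,b)}$ for $\sigma$ and the corresponding ranges for $\dot\sigma$, verifying that the $\dot e^{(r-1)}_{\star} \otimes \tilde e_{k,j}$ correction terms never obscure distinct leading monomials of $\gr\Delta_R(e^{(r)}_{a,b;i,j})$ and that the cutoff imposed by the quotient defining $Y_{1|n}^\ell(\sigma)$ matches the cutoff on the target after applying $\Delta_R$.
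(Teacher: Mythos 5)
Your overall strategy mirrors the paper's: induction on $\ell$, trivial base case, baby comultiplications for the inductive step, and the deduction that injectivity of $\gr\Delta_R$ (or $\gr\Delta_L$) simultaneously gives algebraic independence of the generators of the associated graded algebra and injectivity of the ungraded map.

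There is one point where you go \emph{further} than the paper, and usefully so. The paper's written proof asserts ``at least one of $\Delta_R$ or $\Delta_L$ is defined'' after passing to the minimal admissible shape, but this is simply false when $\sigma=0$ and $\ell>0$: in that situation the minimal admissible shape is $(1\,|\,n)$, $\beta=n$, and both $s_{1,2}$ and $s_{2,1}$ vanish, so neither baby comultiplication exists. Moreover the induction as stated needs the $\sigma=0$ case at all levels, since the map $\Delta_R$ lands in $Y_{1|n}^{\ell-1}(\dot\sigma)\otimes U(\gl_\beta)$ where $\dot\sigma$ may well be zero. You correctly isolate this case and appeal to the PBW theorem for the ordinary truncated super Yangian via the map $\ev^{\otimes\ell}\circ\Delta^{(\ell-1)}$ on evaluation modules --- this is the classical argument of Cherednik/Brundan--Kleshchev and its super analogue appears in the sources (\cite{BR}, \cite{Pe2}) that the paper itself uses as the base of the induction in \textsection 9, so your fix is sound.

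A minor imprecision: ``distinguishable leading monomials'' is vaguer than what is actually needed. The paper's route is to exhibit an explicit algebraically independent set $B$ in $\gr\bigl(Y_{1|n}^{\ell-1}(\dot\sigma)\otimes U(\gl_\beta)\bigr)$ built from the $\dot e^{(r)}_{a,b;i,j}$ and the $x_{i,j}$, and then to show via (\ref{6.3tec1})--(\ref{6.3tec2}) that the images of the proposed generators under $\gr\Delta_R$ can be expressed through $B$ in a triangular way. You should make the change of variables explicit to close the argument; in particular the correction terms $\sum_k \dot e^{(r-1)}_{a,m+1;i,k}\otimes x_{k,j}$ plus a polynomial $f^{(r)}_{a;i,j}$ in the $\dot e$'s need to be seen as recoverable from the images, which is what the careful bookkeeping of the degree ranges achieves. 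You flag this yourself as ``the principal obstacle,'' so you are aware of where the work lies; just be explicit that the argument is a change-of-coordinates argument on the set $B$, not a monomial-order argument.
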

\begin{proof}
We prove by induction on $\ell$, where the case $\ell=0$ is trivial. Assume $\ell>0$ and the first statement holds for any smaller $\ell$. It suffices to prove the induction step in the special case where $\mu=(\mu_1\,|\ldots,\mu_{m+1})$ is the minimal admissible shape for $\sigma$. Thus, at least one of $\Delta_R$ or $\Delta_L$ is defined. We assume without loss of generality that $\Delta_R$ is defined, where the other case can be derived from this by applying the anti-isomorphism $\tau$ defined in (\ref{taudef}). Let $\dot{\sigma}$ be defined by (\ref{babyr1}).

Define the following elements in $\gr\big(Y_{1|n}^{\ell-1}(\dot\sigma)\otimes U(\gl_\beta)\big)$:
\begin{equation*}
\dot e_{a,b;i,j}^{(r)} := \left\{
\begin{array}{ll}
\gr_r \dot D_{a;i,j}^{(r)}\otimes 1&\hbox{if $a=b$,}\\[3mm]
\gr_r \dot E_{a,b;i,j}^{(r)}\otimes 1&\hbox{if $a < b$,}\\[3mm]
\gr_r \dot F_{a,b;i,j}^{(r)}\otimes 1&\hbox{if $a > b$,}
\end{array}\right.
\end{equation*}
and $x_{i,j}:=\gr_1 1\otimes e_{ij}$ for all $1\leq i,j\leq \beta$.

By Theorem \ref{baby1} ({\it 1}), Lemma \ref{lema4.4} ({\it 1}) and Lemma \ref{lema6.2}, there exists some polynomials $f_{a;i,j}^{(r)}$ all in the variables $\dot e_{a,b;i,j}^{(s)}$ such that 
\begin{equation}\label{6.3tec1}
\gr\Delta_R(e_{a,b;i,j}^{(r)})=\dot e_{a,b;i,j}^{(r)},
\end{equation}
for all $1\leq a\leq m+1$, $1\leq b\leq m$, $1\leq i\leq \mu_a$, $1\leq j\leq \mu_b$, $s_{a,b}^\mu<r\leq s_{a,b}^\mu+ p_{min(a,b)}^\mu$, and
\begin{equation}\label{6.3tec2}
\gr\Delta_R(e_{a,m+1;i,j}^{(r)})=\sum_{k=1}^{\beta}\dot e_{a,m+1;i,k}^{(r-1)}\otimes x_{k,j}+f_{a;i,j}^{(r)},
\end{equation}
for all $1\leq a\leq m+1$, $1\leq i\leq \mu_a$, $1\leq j\leq \mu_{m+1}$, $s_{a,m+1}^\mu<r\leq s_{a,m+1}^\mu+ p_a^\mu$, where $\dot e_{m+1,m+1;i,j}^{(0)}:=\delta_{ij}$. 

By the induction hypothesis and the PBW theorem for $U(\gl_\beta)$, the elements 
\begin{align*}
&\{x_{i,j}\}_{1\leq i,j\leq \beta},\\
&\{\dot e_{a,b;i,j}^{(r)}\}_{1\leq a\leq m+1, \,1\leq b\leq m, \,1\leq i\leq \mu_a, \,1\leq j\leq \mu_b, \,s_{a,b}^\mu<r\leq s_{a,b}^\mu+p_{min(a,b)}^\mu},\\
&\{\dot e_{a,m+1;i,j}^{(r)}\}_{1\leq a\leq m+1, \,1\leq i\leq \mu_a, \,1\leq j\leq \mu_{m+1}, \,s_{a,m+1}^\mu+\delta_{a,m+1}-1<r\leq s_{a,m+1}^\mu+p_a^\mu-1}
\end{align*}
are algebraically independent in $\gr\big(Y_{1|n}^{\ell-1}(\dot{\sigma})\otimes U(\gl_\beta) \big)$. Let $B$ denote the set consisting of the above elements.

By $(\ref{6.3tec1})$ and $(\ref{6.3tec2})$, we may express the images of the generators of $\gr Y_{1|n}^\ell(\sigma)$ from Lemma~ \ref{lema6.2} under the map $\gr\Delta_R$ in terms of the elements of $B$. As a result, the images are algebraically independent in $\gr \big(Y_{1|n}^{\ell-1}(\dot\sigma)\otimes U(\gl_\beta)\big)$ and hence $\gr\Delta_R$ is injective. This completes the proof of the induction step.
\end{proof}

As a corollary, we obtain a PBW theorem for $Y_{1|n}^\ell(\sigma)$ in terms of parabolic generators.
\begin{corollary}\label{pbwbasis}
For any admissible shape $\mu=(\mu_1\,|\ldots,\mu_{m+1})$, the set of supermonomials in the elements
\[\lbrace D_{a;i,j}^{(r)}| 1\leq a\leq m+1, 1\leq i,j\leq \mu_{a}, 0<r\leq p_a^\mu\rbrace,\]
\[\lbrace E_{a,b;i,j}^{(r)}| 1\leq a<b\leq m+1, 1\leq i\leq \mu_a, 1\leq j\leq \mu_b , s_{a,b}^\mu<r\leq s_{a,b}^{\mu}+p_a^\mu\rbrace,\]
\[\lbrace F_{b,a;i,j}^{(r)}| 1\leq a<b\leq m+1, 1\leq i\leq \mu_b, 1\leq j\leq \mu_{a} , s_{b,a}^\mu<r\leq s_{b,a}^{\mu}+p_a^\mu\rbrace,\]
taken in some fixed order forms a basis for $Y_{1|n}^\ell(\sigma)$.
\end{corollary}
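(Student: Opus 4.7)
The plan is to deduce the corollary as an almost immediate consequence of Theorem \ref{PBWSYLpara}, using the standard principle that a homogeneous basis of the associated graded algebra lifts to a basis of the filtered algebra.

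First I would fix any admissible shape $\mu=(\mu_1\,|\,\mu_2,\ldots,\mu_{m+1})$ for $\sigma$ together with a total ordering on the proposed generating set
\[
\mathcal{S}_\mu := \{D_{a;i,j}^{(r)}\}\cup\{E_{a,b;i,j}^{(r)}\}\cup\{F_{b,a;i,j}^{(r)}\},
\]
with the degree ranges as in the statement of the corollary. Recall that each element of $\mathcal{S}_\mu$ lies in the canonical filtration piece of degree $r$, and its principal symbol in $\gr Y_{1|n}^\ell(\sigma)$ is precisely the element $e_{a,b;i,j}^{(r)}$ appearing in Theorem \ref{PBWSYLpara}. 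Note that the index ranges in the corollary match exactly those in Theorem \ref{PBWSYLpara}: for $a=b$ we have $s_{a,a}^\mu=0$ so the range $0<r\leq p_a^\mu$ coincides with $s_{a,a}^\mu<r\leq s_{a,a}^\mu + p_{\min(a,a)}^\mu$, and for $a<b$ we have $\min(a,b)=a$, so the $E$- and $F$-ranges match $s_{a,b}^\mu<r\leq s_{a,b}^\mu + p_{\min(a,b)}^\mu$ and $s_{b,a}^\mu<r\leq s_{b,a}^\mu + p_{\min(a,b)}^\mu$, respectively.

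Next I would invoke Theorem \ref{PBWSYLpara}, which asserts that $\gr Y_{1|n}^\ell(\sigma)$ is the free supercommutative superalgebra on the generators $\{e_{a,b;i,j}^{(r)}\}$ over this same index set. In a free supercommutative superalgebra on homogeneous generators, the ordered supermonomials (taken with respect to any fixed total order, with exponent at most $1$ on odd generators) form a linear basis. Hence the ordered supermonomials in the symbols $e_{a,b;i,j}^{(r)}$ form a basis of $\gr Y_{1|n}^\ell(\sigma)$.

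Finally I would perform the standard lift: for $\Omega$ an ordered supermonomial in $\mathcal{S}_\mu$ of total canonical degree $d$, its image in $F_d Y_{1|n}^\ell(\sigma)/F_{d-1}Y_{1|n}^\ell(\sigma)$ equals the corresponding ordered supermonomial in the $e_{a,b;i,j}^{(r)}$'s. Linear independence follows at once, since a nontrivial relation of lowest filtration degree would give a nontrivial relation among the symbols in $\gr Y_{1|n}^\ell(\sigma)$, contradicting the previous step. Spanning follows by induction on filtration degree: any $x\in F_d Y_{1|n}^\ell(\sigma)$ has a symbol in $\gr_d Y_{1|n}^\ell(\sigma)$ which is a linear combination of ordered monomials in the $e_{a,b;i,j}^{(r)}$'s; subtracting the corresponding lift from $\mathcal{S}_\mu$ gives an element of $F_{d-1}Y_{1|n}^\ell(\sigma)$, to which the inductive hypothesis applies. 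The main step in this argument has already been carried out in Theorem \ref{PBWSYLpara}; the only work remaining is this purely formal graded-to-filtered transfer, so no serious obstacle is anticipated beyond careful bookkeeping of the index conventions.
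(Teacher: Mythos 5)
Your proof is correct and follows the same (standard and essentially the only reasonable) route the paper implicitly takes: the paper states the corollary with no proof immediately after Theorem \ref{PBWSYLpara}, and the intended argument is precisely the graded-to-filtered transfer you spell out, together with the index-range bookkeeping you carefully verify. Your explicit check that the ranges in the corollary match those of Theorem \ref{PBWSYLpara} (using $s_{a,a}^\mu=0$ for admissible $\mu$ and $\min(a,b)=a$ when $a<b$) is exactly the point that makes this an immediate consequence.
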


\section{Finite $W$-superalgebras and pyramids}

Throughout this section, $\mathfrak{g}=\glMN$ and $(\, \cdot \, ,\,\cdot\,)$ denotes the non-degenerate even supersymmetric invariant bilinear form on $\g$ defined by $(x,y):=str(xy)$ for all $x,y \in \g$, where $str$ means the super trace form and $xy$ stands for the usual composition. Every element of $\g$ in our description is considered $\mathbb{Z}_2$-homogeneous unless mentioned specifically.

In \textsection 7.1, we recall the definition of a finite $W$-superalgebra, which is determined by a nilpotent element $e$ and a semisimple element $h$ of $\glMN$. In \textsection 7.2, a combinatorial object called {\em pyramid} is introduced so that we may encode $e$ and $h$ simultaneously by a diagram $\pi$. Finally, in \textsection 7.3, when $\pi$ satisfies certain restriction, we explain how to associate such a $\pi$ to $Y_{1|n}^\ell(\sigma)$, a shifted Yangian of level $\ell$.

\subsection{Finite $W$-superalgebras of $\glMN$}

Let $e$ be an even nilpotent element in $\g$. It can be shown (cf. \cite{Ho}, \cite{Wa}) that there exists (not uniquely) a semisimple element $h\in\g$ such that $\ad h:\g\rightarrow\g$ gives a {\em good $\mathbb{Z}$-grading of $\g$ for $e$}, which means the following:
\begin{enumerate}
\item[(1)] $\ad h(e)=2e$,
\item[(2)] $\g=\bigoplus_{j\in\mathbb{Z}} \g(j)$, where $\g(j):=\{x\in\g|\ad h(x)=jx\}$,
\item[(3)] the center of $\g$ is contained in $\g(0)$,
\item[(4)] $\ad e:\g(j)\rightarrow\g(j+2)$ is injective for all $j\leq -1$,
\item[(5)] $\ad e:\g(j)\rightarrow\g(j+2)$ is surjective for all $j\geq -1$.
\end{enumerate}

\begin{example}
Let $e$ be an even nilpotent element. By Jacobson-Morozov Theorem, there exist $h\in\g$ and $f\in\g$ such that $\{e,h,f\}$ forms an $\mathfrak{sl}_2$-triple. It follows from the representation theory of $\mathfrak{sl}_2$ that $\ad h$ gives a good $\mathbb{Z}$-grading of $\g$ for $e$, called the {\em Dynkin grading}.
\end{example}

Assume that a good $\mathbb{Z}$-grading for $e$ is given. To simplify the definition of W-superalgebra, throughout this article, we assume in addition that the grading is {\em even}; that is, $\g(i)=0$ for all $i\notin 2\mathbb{Z}$. In the case of $\glMN$, an even good $\mathbb{Z}$-grading always exists; see Theorem \ref{Hoyt} below. Note that, in general, the Dynkin grading in the example above may not be even. 

\begin{remark}
Even good $\mathbb{Z}$-gradings do not always exist in other types; see \cite{Ho}. 
\end{remark}

Define the following subalgebras of $\g$ by
\begin{equation}\label{mpdef}
\mathfrak{p}:=\bigoplus_{j\geq 0}\g(j), \quad \mathfrak{m}:=\bigoplus_{j<0}\g(j).
\end{equation}
Let $\chi\in\g^*$ be defined by $\chi(y):=(y,e)$, for all $y\in\g$. Then the restriction of $\chi$ on $\mathfrak{m}$ gives a one dimensional $U(\mathfrak{m})$-module. Let $I_\chi$ denote the left ideal of $U(\g)$ generated by $\{a-\chi(a)|a\in\mathfrak{m}\}$. 

By the PBW theorem for $U(\g)$, we have $U(\g)=U(\mathfrak{p})\oplus I_\chi$. Let $\pr_\chi:U(\g)\rightarrow U(\mathfrak{p})$ be the natural projection and we identify $U(\g)/I_\chi \cong U(\mathfrak{p})$. Next we define the $\chi$-twisted action of $\mathfrak{m}$ on $U(\mathfrak{p})$ by  $a\cdot y := \pr_\chi([a,y])$ for all $a\in\mathfrak{m}$ and $y\in U(\mathfrak{p})$. 

The {\em finite W-superalgebra} (which we will usually omit the term ``finite" from now on) is defined to be the space of $\mathfrak{m}$-invariants of $U(\mathfrak{p})$ under the $\chi$-twisted action; that is,
\begin{align*}
\mathcal{W}_{e,h}:=U(\mathfrak{p})^\mathfrak{m}=&\{y\in U(\mathfrak{p})| \pr_\chi ([a,y])=0, \forall a\in\mathfrak{m}\}\\
=&\{y\in U(\mathfrak{p})| \big(a-\chi(a)\big)y\in I_\chi, \forall a\in\mathfrak{m}\}.
\end{align*}

\begin{example}
If we take the nilpotent element $e=0$, then $\chi=0$, $\g=\g(0)=\mathfrak{p}$, $\mathfrak{m}=0$, $\mathcal{W}_{e,h}=U(\mathfrak{p})=U(\g)$.
\end{example}

At this point, the definition of a finite $W$-superalgebra depends on the nilpotent element $e$ and a semisimple element $h$ which gives a good $\mathbb{Z}$-grading for $e$.


\subsection{Pyramids and finite $W$-superalgebras}
Suppose that we are given a tuple of positive integers $(q_1,\ldots,q_\ell)$. We associate a diagram $\pi$ consisting of $q_1$ boxes stacked in the first (leftmost) column, $q_2$ boxes stacked in the second column, $\ldots$ , $q_\ell$ boxes stacked in the $\ell$-th (rightmost) column. 

We call a diagram obtained in this way a $pyramid$ if each row of the diagram is a connected horizontal strip. For example, the left diagram is a pyramid but the right one is not:
$$
\begin{picture}(150,70)
\put(-60,10){\line(1,0){60}}
\put(-60,30){\line(1,0){60}}
\put(-40,50){\line(1,0){40}}
\put(-40,70){\line(1,0){20}}
\put(0,10){\line(0,1){40}}
\put(-60,10){\line(0,1){20}}
\put(-40,10){\line(0,1){40}}
\put(-20,10){\line(0,1){60}}
\put(-40,50){\line(0,1){20}}

\put(120,10){\line(1,0){60}}
\put(120,30){\line(1,0){60}}
\put(120,50){\line(1,0){20}}
\put(120,70){\line(1,0){20}}
\put(160,50){\line(1,0){20}}

\put(180,10){\line(0,1){40}}
\put(120,10){\line(0,1){60}}
\put(140,10){\line(0,1){60}}
\put(160,10){\line(0,1){40}}
\end{picture}
$$

Moreover, we assign the boxes of a given pyramid with $+$ and $-$ such that the boxes in a same row have the same sign. Such a pyramid will be called a {\em signed pyramid}. For example,
\[
\pi={\begin{picture}(90, 35)%
\put(15,-10){\line(1,0){60}}
\put(15,5){\line(1,0){60}}
\put(30,20){\line(1,0){45}}
\put(30,35){\line(1,0){30}}
\put(15,-10){\line(0,1){15}}
\put(30,-10){\line(0,1){45}}
\put(45,-10){\line(0,1){45}}
\put(60,-10){\line(0,1){45}}
\put(75,-10){\line(0,1){30}}
\put(18,-5){$-$}\put(33,-5){$-$}\put(48,-5){$-$}\put(63,-5){$-$}
\put(33,10){$-$}\put(48,10){$-$}\put(63,10){$-$}
\put(33,25){$+$}\put(48,25){$+$}
\end{picture}}\\[4mm]
\]

Let $M$ (respectively, $N$) be the number of boxes assigned with $+$ (respectively,~$-$) in a signed pyramid $\pi$. We now explain how to obtain an even nilpotent $e(\pi)$ and a semisimple $h(\pi)$ in $\glMN$ which gives a good $\mathbb{Z}$-grading for $e$ from a given signed pyramid $\pi$.

First of all, we enumerate the $``+"$-boxes by the numbers $\pa{1},\ldots, \pa{M}$ down columns from left to right, and enumerate the $``-"$-boxes by the numbers $1,\ldots,N$ in the same way. In fact, any numbering of $``+"$-boxes by the numbers $\pa{1},\ldots, \pa{M}$ and $``-"$-boxes by the numbers $1,\ldots,N$ would work , so we may choose the most intuitive and convenient way for our purpose. Moreover, we image that each box of $\pi$ is of size $2\times 2$ and our pyramid is built on the $x$-axis where the center of $\pi$ is exactly on the origin as shown in the example below.
\begin{equation}\label{exp}
\pi={\begin{picture}(90, 60)%
\put(15,-10){\line(1,0){80}}
\put(15,10){\line(1,0){80}}
\put(35,30){\line(1,0){60}}
\put(35,50){\line(1,0){40}}
\put(15,-10){\line(0,1){20}}
\put(35,-10){\line(0,1){60}}
\put(55,-10){\line(0,1){60}}
\put(75,-10){\line(0,1){60}}
\put(95,-10){\line(0,1){40}}
\put(23,-3){$1$}\put(43,-3){$3$}\put(63,-3){$5$}\put(83,-3){$7$}
\put(43,16){$2$}\put(63,16){$4$}\put(83,16){$6$}
\put(43,35){$\pa{1}$}\put(63,35){$\pa{2}$}
\put(0,-20){\line(1,0){110}}
\put(53,-23){$\bullet$}
\put(63,-35){$1$}\put(83,-35){$3$}
\put(35,-35){$-1$}\put(15,-35){$-3$}
\end{picture}}\\[10mm]
\end{equation}

Let $I=\{\pa{1}<\ldots<\pa{M}<1<\ldots<N\}$ be an ordered index set and let $\{e_i|i\in I\}$ be a basis of $\mathbb{C}^{M|N}$. We identify $\gl_{M|N}\cong\End (\mathbb{C}^{M|N})$ with the set of $(M+N)\times(M+N)$ matrices over $\mathbb{C}$ by this basis of $\mathbb{C}^{M|N}$ with respect to the following order 
\[e_i<e_j \text{ if } i<j \text{ in } I.\]

Define the element 
\begin{equation}\label{edef}
e(\pi):=\sum_{\substack{i,j\in I}}e_{i,j}\in \gl_{M|N},
\end{equation}
summing over all adjacent pairs $\young(ij)$ of boxes in $\pi.$ It is clear that such an element $e(\pi)$ is even nilpotent.

Let $\widetilde{\text{col}}(i)$ denote the $x$-coordinate of the box numbered with $i\in I$. Then we define the following diagonal matrix
\begin{equation}\label{hdef}
h(\pi):=-\text{diag}\big(\widetilde{\text{col}}(\pa{1}), \ldots, \widetilde{\text{col}}(\pa{M}), \widetilde{\text{col}}(1),\ldots, \widetilde{\text{col}}(N)\big)\in\gl_{M|N}.
\end{equation}
For example, the elements $e(\pi)$ and $h(\pi)$ in $\gl_{2|7}$ associated to the $\pi$ in (\ref{exp}) are
\begin{align*}
e(\pi)=e_{\pa{1}\,\pa{2}}+e_{24}+e_{46}+e_{13}+e_{35}+e_{57},\\
h(\pi)=\text{diag}(1,-1,3,1,1,-1,-1,-3,-3).
\end{align*}
One may check directly that $\ad h(\pi)$ gives a good $\mathbb{Z}$-grading of $\g$ for $e(\pi)$. 

Observe that if we horizontally shift the rows of $\pi$ to obtain another pyramid $\vec\pi$, then $e(\pi)$ and $e(\vec\pi)$ have the same Jordan type and hence they belong to precisely the same nilpotent orbit.
On the contrary, $h(\pi)\neq h(\vec\pi)$. The following theorem assures that every even good $\mathbb{Z}$-gradings for $e(\pi)$ is obtained by shifting the rows of $\pi$. In fact, it still holds if we drop the term ``even".

\begin{theorem}\cite[Theorem 7.2]{Ho}\label{Hoyt}
Let $\pi$ be a given signed pyramid, $e=e(\pi)$ and $h=h(\pi)$ be the elements in $\glMN$ defined by (\ref{edef}) and (\ref{hdef}), respectively. Then $\ad h$ defines an even good $\mathbb{Z}$\text{-}grading for $e$. Moreover, any even good $\mathbb{Z}$\text{-}grading for $e$ is defined by $\ad h(\vec\pi)$ where $\vec\pi$ is some signed pyramid obtained by shifting rows of $\pi$ horizontally.
\end{theorem}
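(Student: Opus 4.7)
The plan is to verify conditions (1)--(5) of an even good $\mathbb{Z}$-grading directly from the combinatorics of $\pi$, and to treat the classification statement via a centralizer argument. For condition (1), observe that for any matrix unit $e_{ab}\in\glMN$ one has $[h(\pi),e_{ab}] = (\widetilde{\text{col}}(b) - \widetilde{\text{col}}(a))\, e_{ab}$. When $a$ and $b$ label horizontally adjacent boxes with $b$ immediately to the right of $a$, the coordinate $\widetilde{\text{col}}$ increases by $2$ (each box has width $2$), so $[h(\pi),e_{ab}] = 2 e_{ab}$, and summing over all adjacent pairs yields $[h(\pi), e(\pi)] = 2 e(\pi)$. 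Condition (2) is immediate since the matrix units diagonalize $\ad h(\pi)$, and the grading is even because every coordinate $\widetilde{\text{col}}(i)\in\{-\ell+1,-\ell+3,\ldots,\ell-1\}$ has a common parity, forcing all differences to be even integers. Condition (3) is automatic: the center of $\glMN$ consists of scalar multiples of the identity, which lie in $\g(0)$.

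The substantive conditions are (4) and (5). For these I would decompose $\g = \bigoplus_{r,s}\g_{r,s}$, where $\g_{r,s}$ is spanned by the matrix units $e_{ab}$ with $a$ labeling a box in row $r$ and $b$ a box in row $s$ of $\pi$. Each $\g_{r,s}$ is $\ad e(\pi)$-stable (since $e(\pi)$ is even, $\ad e(\pi)$ is an ordinary commutator), and the restriction may be identified with the differential $x\mapsto e_{q_r}x - x e_{q_s}$ on $\operatorname{Hom}(\mathbb{C}^{q_s},\mathbb{C}^{q_r})$, where $q_r,q_s$ are the row lengths and $e_{q_r},e_{q_s}$ are standard single-block nilpotents on $\mathbb{C}^{q_r}$ and $\mathbb{C}^{q_s}$. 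The resulting $\mathfrak{sl}_2$-string analysis on each such block (following the ungraded version for $\gl_n$ in \cite{BK2}) shows that $\ker(\ad e(\pi))\cap\g_{r,s}$ sits in degrees $\geq 0$ and $\operatorname{coker}(\ad e(\pi))\cap\g_{r,s}$ in degrees $\leq 0$, which is precisely the content of (4) and (5).

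For the classification, suppose $\ad h'$ is another even good $\mathbb{Z}$-grading for $e = e(\pi)$. Then $[h' - h(\pi), e] = 0$, so $h' - h(\pi)$ lies in the centralizer $\g^e$. Since the Jordan blocks of $e$ are in bijection with the rows of $\pi$, the diagonalizable semisimple elements of $\g^e$ are exactly those constant along each Jordan string, giving $h' = h(\pi) + \sum_r c_r\, \mathbf{1}_{\text{row}_r}$ for scalars $c_r$. For the resulting grading to remain even and integer-valued, each $c_r$ must be an even integer, i.e., an integer multiple of the box width $2$; translating row $r$ of $\pi$ horizontally by $c_r/2$ units produces a signed pyramid $\vec\pi$ with $h' = h(\vec\pi)$. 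The main obstacle is the explicit injectivity/surjectivity verification in (4) and (5): the shape of the argument parallels the $\gl_n$ case in \cite{BK2}, but one must carry the $\mathbb{Z}_2$-parities of each row through the decomposition so that the analysis is compatible with the super structure imposed by the ordering $\pa 1 < \cdots < \pa M < 1 < \cdots < N$.
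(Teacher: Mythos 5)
This theorem is cited in the paper as \cite[Theorem 7.2]{Ho}; no proof appears in the text, so there is no internal argument to compare line by line. Your proposal is a reconstruction from scratch, and it has gaps that matter.

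Conditions (1)--(3) are fine. For (4) and (5), decomposing $\g$ into the row-blocks $\g_{r,s}$ and identifying $\ad e$ with $x\mapsto e_{q_r}x - x e_{q_s}$ on $\operatorname{Hom}(\mathbb{C}^{q_s},\mathbb{C}^{q_r})$ is the right framework, but your sketch treats the $\ad h(\pi)$-eigenvalues as if they were the $\mathfrak{sl}_2$-weights. For a non-rectangular $\pi$ they differ on each $\g_{r,s}$ by a constant shift determined by the horizontal offsets of rows $r$ and $s$. The pyramid condition --- every row is a connected strip nested inside the one below --- is exactly what confines that shift to the range where each $\mathfrak{sl}_2$-string still has its top in non-negative degree and its bottom in non-positive degree. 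Your outline never invokes the pyramid hypothesis at the point where it is doing the work, so as written it would ``prove'' (4)--(5) for any stacking of the rows, which is false.

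The classification paragraph has a genuine gap. From $[h'-h(\pi),e]=0$ one does get $h'-h(\pi)\in\g^e$, but the subsequent assertions --- that $h'-h(\pi)$ is semisimple, and that semisimple elements of $\g^e$ are ``constant along each Jordan string'' --- are not true as stated. If $h'$ and $h(\pi)$ do not commute, their difference need not be semisimple; and when $e$ has repeated Jordan block sizes, $\g^e$ contains semisimple elements that mix distinct strings of equal length rather than acting by a scalar on each. What is missing is the conjugation step central to Elashvili--Kac, Brundan--Goodwin \cite{BG}, and Hoyt's own proof: replace $h'$ by a conjugate under the centralizer group of $e$ lying in a fixed maximal torus of a Levi factor of $\g^e$; only then does $h'-h(\pi)$ lie in that torus, and only then is the ``shift each row by an integer'' description available.
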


Now we characterize those $e$ which is of the form $e(\pi)$ for some $\pi$. Let $e\in\gl_{M|N}$ be even nilpotent.
Consider 
\begin{equation}\label{edecomp}
e=e_M\oplus e_N\in\End\mathbb{C}^{M|N},
\end{equation}
where $e_M$ and $e_N$ are the restriction of $e$ to $\mathbb{C}^{M|0}$ and $\mathbb{C}^{0|N}$, respectively. Let $\mu=(\mu_1,\mu_2,\ldots)$ and $\nu=(\nu_1,\nu_2,\ldots)$ be the partitions representing the Jordan type of $e_M$ and $e_N$, respectively. We define a new partition $\la$ by collecting all parts of $\mu$ and $\nu$ together and reorder them by the usual decreasing order, except that if $\mu_i=\nu_j$ for some $i,j$, then we let $\mu_i$ appear first. 

For example, consider $\gl_{6|4}$, $\mu=(3,2,1)$ and $\nu=(3,1)$. Then the resulting $\la$ is given by 
$\lambda=(3^{+},3,2^{+},1^{+},1)$, where we use $i^{+}$ to indicate the number $i$ come from $\mu$.

In particular, the Young diagram $\la$ in French style (which means the longest row is in the bottom) is itself a signed pyramid if we assign the boxes of a row by $``+"$ when that row corresponds to a part of $\mu$ and by $``-"$ otherwise (so we do need to distinguish the numbers from $\mu$ and $\nu$). 

As a consequence, the good $\mathbb{Z}$-gradings of $\gl_{M|N}$ for {\em any} nilpotent $e$ are classified by Theorem~\ref{Hoyt}. Therefore, for a given signed pyramid $\pi$, we let $\W_\pi:=\W_{e(\pi),h(\pi)}$ denote the $W$-superalgebra associated to $e(\pi)$ and $h(\pi)$.
\begin{remark}
Under a certain assumption (see (\ref{crucialhypo}) below), we will prove eventually (Corollary \ref{ezg}) that up to isomorphism, $\W_\pi$ depends only on $e(\pi)$, not on the even good $\mathbb{Z}$-grading given by $h(\pi)$, which is not obvious from the definition.
\end{remark}

Let $\g=\gl_{M|N}$. Now we label the columns of $\pi$ from left to right by $1,\ldots, \ell$, and for any $i\in I$ we let col($i$) denote the column where $i$ appear. Define the {\em Kazhdan filtration} of $U(\g)$
\[
\cdots\subseteq F_dU(\g) \subseteq F_{d+1}U(\g)\subseteq \cdots
\] 
by declaring 
\begin{equation}\label{degdef}
\deg (e_{i,j}):= \text{col}(j)-\text{col}(i)+1
\end{equation}
for each $i,j \in I$ and $F_dU(\g)$ denotes the span of all supermonomials $e_{i_1,j_1}\cdots e_{i_s,j_s}$ for $s\geq 0$ and $\sum_{k=1}^s$ deg $(e_{i_k,j_k})\leq d$. Let $\gr U(\g)$ denote the associated graded superalgebra. The natural projection $\g\twoheadrightarrow\mathfrak{p}$ induces a grading on $\W_\pi$. 

On the other hand, let $\g^e$ denote the centralizer of $e$ in $\g$ and let $S(\g^e)$ denote the associated supersymmetric superalgebra. We define the Kazhdan filtration on $S(\g^e)$ by the same setting (\ref{degdef}). The following proposition was observed in \cite{Zh}, where the mild assumption there is satisfied when the good $\mathbb{Z}$-grading for $e$ is even.

\begin{proposition}\cite[Remark 3.9]{Zh}\label{dimprop}
$\gr \W_\pi\cong S(\g^e)$ as Kazhdan graded superalgebras.
\end{proposition}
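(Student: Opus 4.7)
The plan is to adapt the classical Premet--Gan--Ginzburg argument for finite $W$-algebras to the $\mathbb{Z}_2$-graded setting, as in \cite{Zh}. First, I would observe that the Kazhdan filtration defined in (\ref{degdef}) restricts to $U(\mathfrak{p}) \subseteq U(\mathfrak{g})$, and hence, via the inclusion $\W_\pi \subseteq U(\mathfrak{p})$, induces a filtration on $\W_\pi$. Since $\gr U(\mathfrak{p}) \cong S(\mathfrak{p})$ as Kazhdan-graded supercommutative superalgebras, this provides a natural embedding $\gr \W_\pi \hookrightarrow S(\mathfrak{p})$.

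Next, I would analyze what the defining condition $(a-\chi(a))y \in I_\chi$ for $a \in \mathfrak{m}$, $y \in U(\mathfrak{p})$ becomes on the associated graded. The character $\chi(x) = (x,e)$ is supported on $\mathfrak{g}(-2)$ (the form is invariant and $e \in \mathfrak{g}(2)$), whose elements have Kazhdan degree $0$ by (\ref{degdef}). Thus $a-\chi(a)$ has the same Kazhdan degree as $a$, and the $\chi$-twisted action is strictly filtered. Passing to the associated graded, the invariance condition translates into annihilation under the Poisson-bracket action coming from the coadjoint action of $\mathfrak{m}$ on the affine subspace $e+\mathfrak{m}^\perp \subseteq \mathfrak{g}^*$, where we identify $\mathfrak{g}^* \cong \mathfrak{g}$ via the form so that $\mathfrak{p}^* \cong \mathfrak{m}^\perp$.

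I would then invoke the super Slodowy-slice transversality. Properties (4) and (5) of an even good $\mathbb{Z}$-grading, together with the evenness assumption, yield the super vector space decomposition $\mathfrak{g} = [e,\mathfrak{m}] \oplus \mathfrak{g}^e$; exponentiating gives an isomorphism of affine superschemes
\[
M \times (e+\mathfrak{g}^e) \xrightarrow{\sim} e+\mathfrak{m}^\perp,
\]
where $M$ is the connected unipotent supergroup with Lie algebra $\mathfrak{m}$. Taking $M$-invariants and restricting to the slice gives $\mathbb{C}[e+\mathfrak{m}^\perp]^M \cong \mathbb{C}[e+\mathfrak{g}^e] = S(\mathfrak{g}^e)$, and composing with the embedding of the first paragraph yields an inclusion $\gr \W_\pi \hookrightarrow S(\mathfrak{g}^e)$ of graded superalgebras. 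Surjectivity is then established by producing, for each weight-space basis element $x \in \mathfrak{g}^e$, an explicit $\mathfrak{m}$-invariant element of $U(\mathfrak{p})$ whose Kazhdan symbol equals $x$.

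The main obstacle is the last step: constructing these $\mathfrak{m}$-invariant lifts, which classically is carried out by a symmetrization procedure, requires careful supersign bookkeeping in the $\mathbb{Z}_2$-graded setting. The evenness of the good $\mathbb{Z}$-grading is essential here, because it ensures that $\mathfrak{g}^e$ carries no parity ambiguity relative to the underlying $\mathbb{Z}_2$-grading of $\mathfrak{g}$, so that the graded dimensions of $\gr \W_\pi$ and $S(\mathfrak{g}^e)$ match componentwise; without evenness, a graded-algebra isomorphism on the nose cannot be expected.
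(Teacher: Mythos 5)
The paper offers no proof of this proposition at all: it is imported verbatim from \cite[Remark~3.9]{Zh}, with the single surrounding comment that the ``mild assumption'' made there is satisfied because the good $\mathbb{Z}$-grading is even. So there is no internal argument against which to compare your attempt; you are effectively reconstructing what Zhao does. Your overall strategy---pass to associated graded, interpret the $\mathfrak{m}$-invariance condition as a Poisson-invariance condition on $\chi+\mathfrak{m}^\perp\subseteq\mathfrak{g}^*$, and invoke a Gan--Ginzburg/Premet-style slice transversality---is indeed the standard route and is presumably what \cite{Zh} adapts to the super setting, so the high-level plan is sound.

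However, there is a concrete error in the transversality step. The claimed decomposition $\mathfrak{g}=[e,\mathfrak{m}]\oplus\mathfrak{g}^e$ cannot hold: injectivity of $\ad e$ on $\mathfrak{m}$ (property (4)) gives $\dim[e,\mathfrak{m}]=\dim\mathfrak{m}$, while the good-grading axioms together with the form pairing $\mathfrak{g}(j)$ with $\mathfrak{g}(-j)$ give $\dim\mathfrak{g}^e=\dim\mathfrak{g}(0)$ and $\dim\mathfrak{g}=2\dim\mathfrak{m}+\dim\mathfrak{g}(0)$, so the right side has dimension $\dim\mathfrak{g}-\dim\mathfrak{m}$, not $\dim\mathfrak{g}$. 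Moreover, under the identification $\mathfrak{g}^*\cong\mathfrak{g}$, one has $\mathfrak{m}^\perp=\bigoplus_{j\le 0}\mathfrak{g}(j)$, whereas $\mathfrak{g}^e\subseteq\bigoplus_{j\ge 0}\mathfrak{g}(j)$, so the slice $e+\mathfrak{g}^e$ is not even a subspace of $e+\mathfrak{m}^\perp$ and the stated map $M\times(e+\mathfrak{g}^e)\to e+\mathfrak{m}^\perp$ is not defined. What is true is the decomposition $\mathfrak{m}^\perp=[\mathfrak{m},e]\oplus V$ for a complement $V$ in nonpositive degrees (for instance $V=\mathfrak{g}^f$ coming from an $\mathfrak{sl}_2$-triple), and then $\mathbb{C}[e+V]\cong S(\mathfrak{g}^e)$ is recovered only after the extra identification $V^*\cong\mathfrak{g}^e$ via the form. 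You should also be aware that the final surjectivity step---lifting a basis of $\mathfrak{g}^e$ to $\mathfrak{m}$-invariants in $U(\mathfrak{p})$---is only gestured at; an equally standard and more robust route is to obtain injectivity from the slice argument and then conclude equality by matching graded dimensions componentwise, which is exactly what the evenness hypothesis guarantees.
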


\subsection{Pyramids and shifted super Yangians}

From now on and the remainder of this article, we assume that our signed pyramids satisfy the following property:
\begin{equation}\label{crucialhypo}
\text{The top row of } \pi \text{ is the only row assigned with +.} 
\end{equation}
In terms of the notations in ($\ref{edecomp}$), it means that $e(\pi)=e=e_M\oplus e_N$, where $e_M$ is principal nilpotent in $\gl_{M|0}$ and the sizes of the Jordan blocks of $e_N$ are all greater or equal to $M$. 

Let $\pi$ be a fixed signed pyramid satisfying (\ref{crucialhypo}). We explain how to associate $\pi$ with a truncated shifted super Yangian. Firstly, let $\ell$ be the length of the bottom row of $\pi$, and we label the columns of $\pi$ by $1,\ldots,\ell$ from left to right. Next we let $|q_i|$ denote the number of boxes in the $i$-th column of $\pi$ for $1\leq i\leq \ell$ and let $n+1$ be the maximal number in $\{ |q_i| \,|1\leq i\leq \ell\}$. Also, we label the rows of $\pi$ by $1,\ldots,n+1$ from top to bottom.

Define the shift matrix $\sigma:=(s_{i,j})_{1\leq i,j\leq n+1}$ by setting
\begin{align*}
s_{i,j}:= &\,\text{the number of bricks the }i\text{-th row is indented
from the }j\text{-th row}\\
 &\text{at the left (respectively, right) edge of}\\
 &\text{the diagram if }i \geq j\text{ (respectively, if }i\leq j).  
\end{align*}
It is not hard to check that such a definition gives a shift matrix satisfying (\ref{sijk}). With this shift matrix $\sigma$ and the integer $\ell$, we know precisely the generators and defining relations for $Y_{1|n}(\sigma)$ and hence $Y_{1|n}^\ell(\sigma)$ as its quotient.

This process can be reversed as follows. Let $\sigma=(s_{ij})_{1\leq i,j\leq n+1}$ be a shift matrix and let the level $\ell\geq s_{1,n+1}+s_{n+1,1}$ be given. We define the tuple $(p_1,\dots,p_{n+1})$ by setting $p_i := \ell-s_{i,n+1}-s_{n+1,i}$ as in (\ref{defpi}). Now draw a pyramid $\pi$ with $p_i$ bricks on the $i$th row indented $s_{n+1,i}$ columns from the left-hand edge and $s_{i,n+1}$ columns from the right-hand edge, for each $i=1,\dots,n+1$. Finally we assign the top row of $\pi$ by + and the remaining boxes by $-$. 

As a summary, given a signed pyramid $\pi$, we obtain $Y_{1|n}^\ell(\sigma)$, a shifted Yangian of level $\ell$ and vice versa. For example, 
$$
\ell=6,\,
\sigma = \left(\begin{array}{l|lll}
0&1&1&2\\
\hline
0&0&0&1\\
1&1&0&1\\
2&2&1&0
\end{array}\right)
\qquad\longleftrightarrow \qquad
\pi = 
{\begin{picture}(90, 35)%
\put(0,-25){\line(1,0){90}}
\put(0,-10){\line(1,0){90}}
\put(15,5){\line(1,0){60}}
\put(30,20){\line(1,0){45}}
\put(30,35){\line(1,0){30}}
\put(0,-25){\line(0,1){15}}
\put(15,-25){\line(0,1){30}}
\put(30,-25){\line(0,1){60}}
\put(45,-25){\line(0,1){60}}
\put(60,-25){\line(0,1){60}}
\put(75,-25){\line(0,1){45}}
\put(90,-25){\line(0,1){15}}
\put(3,-20){$-$}\put(18,-20){$-$}\put(33,-20){$-$}\put(48,-20){$-$}\put(63,-20){$-$}\put(78,-20){$-$}
\put(18,-5){$-$}\put(33,-5){$-$}\put(48,-5){$-$}\put(63,-5){$-$}
\put(33,10){$-$}\put(48,10){$-$}\put(63,10){$-$}
\put(33,25){$+$}\put(48,25){$+$}
\end{picture}}
$$

The following proposition is a well-known result about $\g^e$. As remarked in \cite{BBG}, the result is similar to the Lie algebra case $\gl_{M+N}$ since $e$ is even.
\begin{proposition}\label{counting2}
Let $\pi$ be a signed pyramid with row lengths $\lbrace p_i| 1\leq i\leq n+1\rbrace$, $\sigma=(s_{i,j})_{1\leq i,j\leq n+1}$ be the associated shift matrix of $\pi$ and let $e$ be the nilpotent element defined by (\ref{edef}). For all $1\leq i,j\leq n+1$ and $r>0$, define
\[
c_{i,j}^{(r)}:=
\sum_{\substack{h,k\in I \\ \row(h)=i, \row(k)=j\\ \col(k)-\col(h)=r-1}}e_{h,k}\in \g=\glMN.
\]
Then the set of vectors $\lbrace c_{i,j}^{(r)}|1\leq i,j\leq n+1, s_{i,j}<r\leq s_{i,j}+p_{min (i,j)}\rbrace$ forms a basis for $\g^e$.
\end{proposition}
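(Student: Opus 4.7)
My plan is to verify three properties: (a) each listed $c_{i,j}^{(r)}$ lies in $\g^e$; (b) the listed elements are linearly independent; and (c) their cardinality equals $\dim\g^e$. Together, these give the basis.

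For (a), write $e = \sum_m e_{m, m^+}$ over boxes $m\in\pi$ whose right neighbor $m^+$ exists; all summands are even (same-row boxes have the same parity), so the super bracket reduces to the ordinary commutator and gives
\[
[e, e_{h,k}] = \mathbf{1}_{h^-\in\pi}\,e_{h^-,k} - \mathbf{1}_{k^+\in\pi}\,e_{h,k^+}.
\]
Substituting into $c_{i,j}^{(r)}$ and reindexing ($h\mapsto h^-$ in the first sum, $k\mapsto k^+$ in the second) yields
\[
[e, c_{i,j}^{(r)}] = \sum_{(h,k)}\big(\mathbf{1}_{h^+\in\pi}-\mathbf{1}_{k^-\in\pi}\big)\,e_{h,k},
\]
summed over $(h,k)\in I\times I$ with $\row(h)=i$, $\row(k)=j$, $\col(k)-\col(h)=r$. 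The shift-matrix axioms force the pyramid to be nested: $a_i=1+s_{n+1,i}$ is nonincreasing in $i$ while $b_i=\ell-s_{i,n+1}$ is nondecreasing, so row $i\subseteq$ row $j$ whenever $i\le j$, with $s_{i,j}=b_j-b_i$ for $i\le j$ and $s_{i,j}=a_j-a_i$ for $i\ge j$. A short case analysis then shows that the prescribed range $s_{i,j}<r\le s_{i,j}+p_{\min(i,j)}$ excludes both potential boundary columns $\col(h)=b_i$ and $\col(h)=a_j-r$ from the index interval, so the two indicators coincide on every term and $[e,c_{i,j}^{(r)}]=0$.

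For (b), distinct triples $(i,j,r)$ give $c_{i,j}^{(r)}$'s supported on disjoint families of matrix units, and the admissible range produces nonempty index intervals, so each $c_{i,j}^{(r)}$ is nonzero. For (c), because $p_{\min(i,j)}$ is exactly the number of columns shared by rows $i$ and $j$ (by the nested-row structure),
\[
\sum_{i,j} p_{\min(i,j)} = \sum_{c=1}^{\ell} |q_c|^2 = \dim\g(0).
\]
Since $\ad e:\g(j)\to\g(j+2)$ is injective for $j\le -1$ and surjective for $j\ge -1$, and $\g(j)=0$ for odd $j$ (even grading), the standard telescoping gives $\dim\g^e=\dim\g(0)+\dim\g(1)=\dim\g(0)$, matching the count. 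The main technical step is the boundary-vanishing check in (a); the rest is bookkeeping.
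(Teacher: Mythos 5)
Your proof is correct and self-contained. The paper does not actually prove Proposition~\ref{counting2}: it cites the result as well known and, following \cite{BBG}, remarks that because $e$ is even the determination of $\g^e$ is essentially identical to that of an ordinary nilpotent in $\gl_{M+N}$, implicitly deferring to the classical pyramid combinatorics for $\gl_{M+N}$ (e.g.\ \cite{BK2}). Your argument replaces that reference with an explicit computation. The key points all check out: since $e=\sum_m e_{m,m^+}$ is even, the supercommutator $[e,e_{h,k}]$ really is the ordinary commutator $\mathbf{1}_{h^-\in\pi}e_{h^-,k}-\mathbf{1}_{k^+\in\pi}e_{h,k^+}$; the reindexed coefficient $\mathbf{1}_{h^+\in\pi}-\mathbf{1}_{k^-\in\pi}$ is $\mathbf{1}_{\col(h)<b_i}-\mathbf{1}_{\col(h)>a_j-r}$; and the nested-row inequalities $a_i\geq a_j$, $b_i\leq b_j$ for $i\leq j$ together with $r>s_{i,j}$ force both indicators to agree on the whole index interval (for $i\leq j$ the threat is $\col(h)=b_i$, killed by $r>b_j-b_i$, while $\col(h)=a_j-r$ is automatically below $a_i$; symmetrically for $i>j$ — so saying the range ``excludes both'' is slightly loose phrasing, but the case analysis is right). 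Disjoint supports give independence, and $\sum_{i,j}p_{\min(i,j)}=\sum_c|q_c|^2=\dim\g(0)=\dim\g^e$ (using the even good grading and the trace-form pairing $\g(j)\times\g(-j)\to\C$) completes the count. What your route buys is an elementary, verifiable proof that never invokes the analogy with $\gl_{M+N}$; what the paper's citation buys is brevity.
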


\begin{corollary}\label{dimcoro}
Consider $Y_{1|n}^\ell(\sigma)$ with the canonical filtration and $S(\g^e)$ with the Kazhdan filtration. Let $F_dY_{1|n}^\ell(\sigma)$ and $F_dS(\g^e)$ denote the superspace with total degree $\leq d$ in the associated filtered superalgebras, respectively. Then for each $d\geq 0$, we have $\dim F_dY_{1|n}^\ell(\sigma) = \dim F_dS(\g^e)$.
\end{corollary}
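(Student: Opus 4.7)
The plan is to compare both sides through their associated graded algebras, which are both free supercommutative superalgebras, and then check that their graded pieces have the same dimensions degree by degree.

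First, I choose the finest admissible shape $\mu = (1^{n+1})$, so that $\mu_a = 1$ for all $a$, and hence $p_a^\mu = p_a$ and $s_{a,b}^\mu = s_{a,b}$. Under this choice, Theorem~\ref{PBWSYLpara} tells us that $\gr Y_{1|n}^\ell(\sigma)$ is the free supercommutative superalgebra on generators
\[
\{ e_{a,b;1,1}^{(r)} \mid 1 \leq a,b \leq n+1,\ s_{a,b} < r \leq s_{a,b} + p_{\min(a,b)} \},
\]
where $e_{a,b;1,1}^{(r)}$ sits in canonical degree $r$, and is odd iff exactly one of $a,b$ equals $1$ (matching $\tp$).

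Next, I appeal to Proposition~\ref{counting2}, which gives a basis $\{ c_{i,j}^{(r)} \}$ of $\g^e$ indexed by exactly the same set. Hence $S(\g^e)$ is the free supercommutative superalgebra on this basis. The parity of $c_{i,j}^{(r)}$ in $\glMN$ is odd iff exactly one of the row indices $i,j$ equals $1$ (since only the top row of the pyramid carries $+$), which matches the parity on the Yangian side. For the grading, observe that each summand $e_{h,k}$ in $c_{i,j}^{(r)}$ satisfies $\col(k)-\col(h) = r-1$, so by~(\ref{degdef}) every such term has Kazhdan degree $r$; therefore $c_{i,j}^{(r)}$ lives in Kazhdan degree $r$ in $S(\g^e)$.

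Putting these together, $\gr Y_{1|n}^\ell(\sigma)$ and $S(\g^e)$ are both free supercommutative superalgebras on generating sets that are in bijection under $e_{a,b;1,1}^{(r)} \leftrightarrow c_{a,b}^{(r)}$, and this bijection preserves both parity and degree. Consequently their graded Hilbert (super)series agree, i.e.\ $\dim \gr_d Y_{1|n}^\ell(\sigma) = \dim \gr_d S(\g^e)$ for all $d \geq 0$. Summing over degrees $\leq d$ yields $\dim F_d Y_{1|n}^\ell(\sigma) = \dim F_d S(\g^e)$, as desired.

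There is no real obstacle here beyond the bookkeeping: all the hard work has already been done in Theorem~\ref{PBWSYLpara} (giving a PBW basis for $Y_{1|n}^\ell(\sigma)$ indexed correctly) and in Proposition~\ref{counting2} (giving the matching basis of $\g^e$). The only thing one must verify carefully is that the parity convention on the Yangian side (where $\tp(1)=0$ and $\tp(i)=1$ otherwise) agrees with the signed-pyramid convention on the centralizer side (where the top row is $+$ and all other rows are $-$), which is immediate from the construction of $\pi$ from $\sigma$.
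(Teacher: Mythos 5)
Your proposal is correct and follows essentially the same route as the paper, which likewise invokes Theorem~\ref{PBWSYLpara} and Proposition~\ref{counting2} and then passes from the graded to the filtered dimensions (the paper phrases this last step as ``induction on $d$,'' which is the same bookkeeping you carry out by summing the graded pieces). Your careful verification that the index sets, degrees, and parities match is exactly the content that the paper leaves implicit.
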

\begin{proof}
Follows from Theorem \ref{PBWSYLpara}, Proposition \ref{counting2} and induction on $d$.
\end{proof}

\begin{remark}\label{gencase}
In the most general case, where the even nilpotent element $e$ could be {\it arbitrary}, one may still use a signed pyramid to denote $e$ and $h$ simultaneously; or equivalently a pair ($\sigma$, $\ell$) as what we did above. However, a presentation of the corresponding shifted super Yangian $Y_{m|n}(\sigma)$ (and its quotient) is unknown yet. Moreover, the descriptions of the maps $\Delta_R$ and $\Delta_L$ in Theorem \ref{baby1} and the computations in section 9 would be much more complicated if we drop the assumption (\ref{crucialhypo}).
\end{remark}

\section{Invariants}\label{Invariants}
Let $\pi$ be a given signed pyramid satisfying (\ref{crucialhypo}). We will define some elements in $U(\mathfrak{p})$. It turns out later that they are $\mathfrak{m}$-invariant, i.e., belong to $\W_\pi$.

Let $(\check{q}_1,\dots,\check{q}_{\ell})$ denote the {\em super column heights} of $\pi$, where each $\check{q}_i$ is defined to be the number of boxes signed with $``+"$ subtract the number of boxes signed with $``-"$ in the $i$-th column of $\pi$. We also define the {\em absolute height} by setting
$|q_i|$:= the number of total boxes (regardless the signs) in the $i$-th column. Let $h$ denote the {\em super height} of $\pi$, which is defined to be the number $\check{q}_j$ such that $|q_j|$ is maximal.

Define $\rho = (\rho_1,\dots,\rho_\ell)$ by setting that
\begin{equation}\label{rhodef}
\rho_r := h-\check{q}_{r} - \check{q}_{r+1} -\cdots-\check{q}_\ell
\end{equation}
for each $r=1,\dots,\ell$.

Recall the ordered index set $I:=\lbrace \pa{1}<\ldots<\pa{M}<1<\ldots<N \rbrace$, where $M$ and $N$ denote the number of boxes of $\pi$ assigned with ``$+$" and ``$-$", respectively. 
For all $i,j\in I$, define
\begin{equation}\label{etil}
\tilde e_{i,j} := (-1)^{\col(j)-\col(i)} 
(e_{i,j} + \delta_{i,j} (-1)^{\tpa(i)}\rho_{\col(i)}),
\end{equation}
where $\tpa(i):= 0$ if $i\in\{\pa{1},\ldots,\pa{M}\}$ and $\tpa(i):= 1$ otherwise. One should be careful that the number $\tpa(i)$ defined here is for $\gl_{M|N}$, while the number $\tp(i)$ defined in \textsection 2 is for $Y_{1|n}$.

One may check that \begin{multline}\label{etilrel}
[\tilde e_{i,j}, \tilde e_{h,k}]
=(\tilde{e}_{i,k} - \delta_{i,k} (-1)^{\tpa(i)} \rho_{\col(i)})\delta_{h,j}\\
- (-1)^{(\tpa(i)+\tpa(j))(\tpa(h)+\tpa(k))}
\delta_{i,k} (\tilde e_{h,j} - \delta_{h,j} (-1)^{\tpa(j)}\rho_{\col(j)}).
\end{multline}

Let us also spell out the effect of the homomorphism $U(\mathfrak{m}) \rightarrow \C$ induced by the character $\chi$. For any $i,j\in I$, we have
\begin{equation}\label{chidef}
\chi (\tilde e_{i,j}) = \left\{
\begin{array}{ll}
(-1)^{\tpa(i)+1}&\hbox{if $\row(i)=\row(j)$ and $\col(i) = \col(j)+1$;}\\
0&\hbox{otherwise.}
\end{array}\right.
\end{equation}

Now we give the most important definition of this article. For $1\leq i,j\leq n+1$ and signs
$\sigma_i\in \{\pm\}$,
we let
$T_{ i,j;\sigma_{1},\dots,\sigma_{n+1}}^{(0)} := \delta_{i,j} \sigma_i$
and for $r \geq 1$ define
\begin{equation}\label{tdef}
T_{i,j;\sigma_{1},\ldots,\sigma_{n+1}}^{(r)}
:=
\sum_{s = 1}^r
\sum_{\substack{i_1,\dots,i_s\\j_1,\dots,j_s}}
\sigma_{\row(j_1)} \cdots \sigma_{\row(j_{s-1})}
(-1)^{\tpa(i_1)+\cdots+\tpa(i_s)}
 \tilde e_{i_1,j_1} \cdots \tilde e_{i_s,j_s}
\end{equation}
where the second sum is taken over all $i_1,\dots,i_s,j_1,\dots,j_s\in I$
such that
\begin{itemize}
\item[(1)] $\deg(e_{i_1,j_1})+\cdots+\deg(e_{i_s,j_s}) = r$ (recall (\ref{degdef}));
\item[(2)] $\col(i_t) \leq \col(j_t)$ for each $t=1,\dots,s$;
\item[(3)] if $\sigma_{\row(j_t)} = +$ then
$\col(j_t) < \col(i_{t+1})$ for each
$t=1,\dots,s-1$;
\item[(4)]
if $\sigma_{\row(j_t)} = -$ then $\col(j_t) \geq \col(i_{t+1})$
for each
$t=1,\dots,s-1$;
\item[(5)] $\row(i_1)=i$, $\row(j_s) = j$;
\item[(6)]
$\row(j_t)=\row(i_{t+1})$ for each $t=1,\dots,s-1$.
\end{itemize}

Note that the assumptions (1) and (2) imply that $T_{i,j;\sigma_{1},\dots,\sigma_{n+1}}^{(r)}$ belongs to $\mathrm{F}_r U(\mathfrak p)$. For $0\leq x\leq n+1$, let $T_{i,j;x}^{(r)}$ denote $T_{i,j;\sigma_{1},\dots,\sigma_{n+1}}^{(r)}$ in the special case that $\sigma_{i}=-$ for all $i\leq x$ and $\sigma_{j}=+$ for all $j>x$.
Define the following series for all $1\leq i,j\leq n+1$:
\begin{equation}\label{tseries}
T_{i,j;x}(u) := \sum_{r \geq 0} T_{i,j;x}^{(r)} u^{-r}
\in U(\mathfrak p) [[u^{-1}]].
\end{equation}

\begin{lemma}\label{ttodef}
Let $i,j,x,y$ be non-negative integers.
\begin{itemize}
\item[(i)] If $x < i \leq y < j \leq n+1$ then
$$
T_{i,j;x}(u) = \sum_{k=x+1}^y T_{i,k;x}(u) \, T_{k,j;y}(u).
$$
\item[(ii)] If $x < j \leq y<i\leq n+1$ then
$$
T_{i,j;x}(u) = \sum_{k=x+1}^y  T_{i,k;y}(u) \, T_{k,j;x}(u).
$$
\item[(iii)] If $x< y < i \leq n+1$ and $y < j \leq n+1$, then
$$
T_{i,j;x}(u) = T_{i,j;y}(u)
+ \sum_{k,l=x+1}^y  T_{i,k;y}(u) \, T_{k,l;x}(u) \, T_{l,j;y}(u).
$$
\item[(iv)]
If $x < i \leq y\leq n+1$ and $x < j \leq y$, then
$$
\sum_{k=x+1}^y  T_{i,k;x}(u) \, T_{k,j;y}(u) = -\delta_{i,j}.
$$
\end{itemize}
\end{lemma}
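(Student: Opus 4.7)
The plan is to prove the four identities by direct combinatorial manipulation of the monomial expansions defined in \eqref{tdef}, in the spirit of \cite[Lemma~9.2]{BK2} for the Lie algebra case, with extra care taken for the super signs $(-1)^{\tpa(i_t)}$. The unifying idea is that $T_{i,j;x}(u)$ records weighted path-like sums in the pyramid $\pi$, and the four identities express natural splicings of such paths at the threshold row $y$.

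For parts (i)--(iii) I would establish a canonical cut-point bijection between the monomials on the two sides. For (i), given a monomial $\tilde e_{i_1,j_1}\cdots\tilde e_{i_s,j_s}$ contributing to $T_{i,j;x}(u)$, the associated row sequence $i=\row(i_1),\row(j_1),\ldots,\row(j_{s-1}),\row(j_s)=j$ begins with $i\in(x,y]$ and ends with $j>y$, so it must cross the threshold $y$ at some first position; cutting there produces a unique factorization as a product of a monomial in $T_{i,k;x}(u)$ and a monomial in $T_{k,j;y}(u)$ for some $k\in(x,y]$. Parts (ii) and (iii) are treated analogously: for (ii) one cuts at the unique threshold crossing with $i>y$, $j\in(x,y]$; for (iii) one splits the monomials in $T_{i,j;x}(u)$ according to whether the row sequence ever dips into $(x,y]$, giving either $T_{i,j;y}(u)$ (if it does not) or a triple product (if it does). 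In each case, the main technical work is to check that the column-inequality constraints (2)--(4) in \eqref{tdef} translate correctly between the two sides of the cut; this uses the fact that the sign conventions of $T_{i,k;x}$ and $T_{k,j;y}$ agree with those of $T_{i,j;x}$ on exactly the rows that lie on the corresponding side of the cut.

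Part (iv) is a cancellation identity rather than a bijection. The degree-zero part works out directly: since $k>x$ gives $T^{(0)}_{i,k;x}=+\delta_{i,k}$ and $k\leq y$ gives $T^{(0)}_{k,j;y}=-\delta_{k,j}$, the constant term of the sum equals $\sum_{k=x+1}^{y}\delta_{i,k}(-\delta_{k,j})=-\delta_{i,j}$. For the higher-order terms I would argue by a sign-reversing involution: each non-trivial monomial in $T_{i,k;x}(u)\,T_{k,j;y}(u)$ can be paired with another obtained by shifting its cut-point by one step on one side, the two partners contributing with opposite signs owing to the switch in leading-term convention across the threshold $y$.

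The main obstacle will be the super sign bookkeeping. In addition to the combinatorial cancellations, the prefactor $(-1)^{\tpa(i_1)+\cdots+\tpa(i_s)}$ in \eqref{tdef} and the super commutator signs in \eqref{etilrel} must be tracked carefully through each concatenation and each step of the involution in (iv). I expect the verification to reduce, after matching leading factors, to an inductive statement on either the total degree $r$ or on the number of columns of $\pi$, with base cases immediate from the leading-term analysis above and from \eqref{chidef}.
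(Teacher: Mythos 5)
Your ``weighted path-splicing'' framing is the right intuition, and it is indeed what underlies the proof that the paper imports from \cite[Lemma 9.2]{BK2}. But your plan for (i)--(iii) has a real gap: these identities are \emph{not} given by a clean cut-point bijection, and the specific claim that the sign conventions of $T_{i,k;x}$ and $T_{k,j;y}$ ``agree with those of $T_{i,j;x}$ on exactly the rows that lie on the corresponding side of the cut'' is false on the second side. In the factor $T_{k,j;y}(u)$ every inner row $r$ with $x<r\le y$ carries sign $-$, so the column index must stay or decrease there, whereas inside a monomial of $T_{i,j;x}(u)$ that same row carries sign $+$ and the column must strictly increase. The monomial supports of the two sides of (i) therefore genuinely differ, and no single cut can match them term by term.

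Two concrete symptoms. First, the ``first threshold crossing'' is the wrong cut: if a monomial of $T_{i,j;x}$ has row sequence $1,3,2,5$ with $(x,y)=(0,2)$, cutting at the first crossing $c=0$ leaves a tail whose inner row $2\in(x,y]$ still moves strictly right, violating the $y$-constraint; so $c=0$ is not a valid factorization. The correct cut is at the \emph{last} position whose row lies in $(x,y]$. Second, and more seriously, even with the correct cut the map only injects the left-hand side into the right: the double sum $\sum_k T_{i,k;x}(u)\,T_{k,j;y}(u)$ contains extra (monomial, cut) pairs that must cancel. For a fixed combined monomial the valid cuts with cut-row in $(x,y]$ form a run of at most two consecutive such positions, and shifting the cut by one step trades a sign factor $+1$ (from the $x$-convention at the departing cut-row) for a sign factor $-1$ (from the $y$-convention at the arriving one), so adjacent cuts contribute with opposite signs. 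Already for a $2\times 2$ rectangular $\pi$ with $(x,y,i,j)=(0,1,1,2)$ in degree $2$, each of $\tilde e_{1,1}\tilde e_{1,2}$, $\tilde e_{3,3}\tilde e_{1,2}$ and $\tilde e_{3,3}\tilde e_{3,4}$ occurs twice on the right-hand side (once inside $T_{1,2;1}^{(2)}$ and once as a degree $1\times 1$ product) with opposite signs, while none of these occur in $T_{1,2;0}^{(2)}$. So (i)--(iii) require exactly the kind of sign-reversing involution you only budget for in (iv). Once that is built in, the rest of your plan works: the $(-1)^{\tpa}$ prefactors do factor multiplicatively across any cut and are unaffected by moving the cut, and (iii) can alternatively be obtained algebraically from (i), (ii) and (iv).
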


\begin{proof}
All of the proofs are exactly the same as in \cite[Lemma 9.2]{BK2} and hence they are omitted. 
\end{proof}

Define $T(u) := \big( T_{i,j;0}(u)\otimes (-1)^{\tp(j) (\tp(i)+1)}E_{i,j} \big)_{1\leq i,j\leq n+1}$, an invertible $(n+1)\times (n+1)$ matrix with entries in $U(\mathfrak{p})[[u^{-1}]]$ where $E_{i,j}$ denotes the elementary matrices. Also let $\mu = (\mu_1\,|\,\mu_{2},\ldots,\mu_{m+1})$ be a fixed shape such that $\mu_1=1$. Consider the Gauss factorization $T(u) = F(u)D(u)E(u)$ where $D(u)$ is a block diagonal matrix, $E(u)$ is a block upper unitriangular matrix, and $F(u)$ is a block lower unitriangular matrix, all block matrices being of shape $\mu$. The diagonal blocks of $D(u)$ define matrices $D_1(u),\ldots,D_{m+1}(u)$, the upper diagonal blocks of $E(u)$ define matrices $E_1(u),\ldots,E_{m}(u)$, and the lower diagonal matrices of $F(u)$ define matrices $F_1(u),\dots,F_{m}(u)$. Also let $D^{\prime}_a(u) := D_a(u)^{-1}$.

Thus $D_a(u) = (D_{a;i,j}(u))_{1 \leq i,j \leq \mu_a}$ and $D^{\prime}_a(u) = (D^{\prime}_{a;i,j}(u))_{1 \leq i,j \leq \mu_a}$ are $\mu_a \times \mu_a$ matrices, $E_a(u) = (E_{a;i,j}(u))_{1 \leq i \leq \mu_a, 1 \leq j \leq \mu_{a+1}}$ are $\mu_a \times \mu_{a+1}$ matrices, and $F_a(u) = (F_{a;i,j}(u))_{1 \leq i \leq \mu_{a+1}, 1 \leq j \leq \mu_{a}}$ are $\mu_{a+1} \times \mu_{a}$ matrices, respectively.
Write
\begin{align*}
D_{a;i,j}(u) &= \sum_{r \geq 0} D_{a;i,j}^{(r)} u^{-r},\quad
& D^{\prime}_{a;i,j}(u) &= \sum_{r \geq 0} 
D_{a;i,j}^{\prime(r)} u^{-r},\\
E_{a;i,j}(u) &= \sum_{r > 0} E_{a;i,j}^{(r)} u^{-r},\quad
&F_{a;i,j}(u) &= \sum_{r > 0} F_{a;i,j}^{(r)} u^{-r},
\end{align*}
and then the elements $D_{a;i,j}^{(r)}, E_{a;i,j}^{(r)}$ and $F_{a;i,j}^{(r)}$ of $U(\mathfrak{p})$ are defined, all depending on the fixed choice of $\mu$. All of them are parallel to the definition of the elements of $Y_{1|n}$ with the same names in \textsection 3.

\begin{theorem}\label{ttodefthm}
With $\mu = (\mu_1\,|\,\mu_{2},\ldots,\mu_{m+1})$ fixed as above and all admissible
$a,i,j$, we have that
\begin{align*}
D_{a;i,j}(u) &= T_{\mu_1+\cdots+\mu_{a-1}+i,\mu_1+\cdots+\mu_{a-1}+j;
\mu_1+\cdots+\mu_{a-1}}(u),\\
D^{\prime}_{a;i,j}(u) &=
-T_{\mu_1+\cdots+\mu_{a-1}+i,\mu_1+\cdots+\mu_{a-1}+j;
\mu_1+\cdots+\mu_a}(u),\\
E_{a;i,j}(u) &= T_{\mu_1+\cdots+\mu_{a-1}+i,\mu_1+\cdots+\mu_{a}+j;
\mu_1+\cdots+\mu_{a}}(u),\\
F_{a;i,j}(u) &= T_{\mu_1+\cdots+\mu_{a}+i,\mu_1+\cdots+\mu_{a-1}+j;
\mu_1+\cdots+\mu_{a}}(u).
\end{align*}
\end{theorem}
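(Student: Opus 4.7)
The plan is to prove the identifications by showing that the claimed right-hand sides satisfy the defining matrix equation $T(u) = F(u)D(u)E(u)$ of the Gauss decomposition, after which the result follows by uniqueness of the decomposition. The main input is Lemma \ref{ttodef}, whose four identities are precisely the block Schur-complement relations characterising a Gauss factorisation; in particular part (iv) is the inversion relation $D_a(u)D_a'(u) = I$ (up to the sign $-1$), while part (iii) is the Schur-complement formula that expresses the off-diagonal contribution $F\cdot D\cdot E$ plus the diagonal piece.

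I would proceed by induction on $m+1$, the number of blocks of $\mu$. In the base case $\mu = (1\,|\,n)$ the decomposition has just two blocks, and one checks directly:
\begin{itemize}
\item $D_{1;1,1}(u)=T_{1,1;0}(u)$ is immediate from the definition of $T(u)$;
\item $D_{2;i,j}(u)=T_{1+i,1+j;1}(u)$ follows from Lemma \ref{ttodef}(iii) with $x=0$, $y=1$, which rearranges into precisely the Schur-complement expression for the lower-right $(n\times n)$ block of $T(u)$ after eliminating the top row and leftmost column;
\item $E_{1;1,j}(u)=T_{1,1+j;1}(u)$ and $F_{1;i,1}(u)=T_{1+i,1;1}(u)$ are obtained from parts (i) and (ii) of the same lemma with $i=1$, respectively $j=1$;
\item $D_{a;i,j}'(u)=-T_{\mu_1+\cdots+\mu_{a-1}+i,\,\mu_1+\cdots+\mu_{a-1}+j;\,\mu_1+\cdots+\mu_a}(u)$ is Lemma \ref{ttodef}(iv) with the appropriate choice of $x,y$, which asserts exactly that the two matrices are mutually inverse up to the sign $-1$.
\end{itemize}

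For the induction step, I refine $\mu$ by splitting one of its parts $\mu_b$ into $\alpha+\beta$, obtaining a finer admissible shape $\nu$. By Lemma \ref{split}, the Gauss decomposition with respect to $\nu$ is obtained from the one with respect to $\mu$ by a further $2\times 2$ block Gauss factorisation of the single diagonal block $D_b(u)$. Applying Lemma \ref{ttodef}(i)--(iv) once more, now with $y = \mu_1+\cdots+\mu_{b-1}+\alpha$ cutting between the two new sub-blocks, yields the claimed identifications of $\vec{D}_b, \vec{D}_{b+1}, \vec{E}_b, \vec{F}_b$ in terms of the series $T_{i,j;x}(u)$; the blocks with indices away from $b$ are unchanged, matching the formulas in the theorem.

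The main obstacle will be sign bookkeeping rather than conceptual. The twist factor $(-1)^{\tp(j)(\tp(i)+1)}$ in the definition of the matrix $T(u)$, together with the parities inherited from matrix multiplication in $U(\mathfrak p)\otimes\End(\mathbb C^{1|n})$, must be tracked carefully to verify that the Gauss decomposition computed block-by-block really coincides with the one produced by Lemma \ref{ttodef}. In the purely even setting the analogous argument \cite[Theorem 9.3]{BK2} is routine; here one has to check that each application of parts (i)--(iv) pulls through the expected signs, which ultimately just reflects the fact that $\tp(1)=0$ and $\tp(i)=1$ for $i>1$ so that only the first row and column are genuinely affected.
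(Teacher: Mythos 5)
Your proposal is correct and follows essentially the same route as the paper's own proof: induction on the number of blocks of $\mu$, with the base case $\mu=(1\,|\,n)$ handled by comparing the $2\times 2$ block Gauss decomposition against Lemma \ref{ttodef}, and the induction step handled by splitting a single block and invoking Lemma \ref{split} together with Lemma \ref{ttodef}(i)--(iv). The paper refines by always splitting off the last part while you split an arbitrary $\mu_b=\alpha+\beta$, but that is an inessential variation.
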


\begin{proof}
Note that it is enough to show the formulae for $D, E$ and $F$, since the one for $D^\prime$ follows from the one for $D$ and Lemma \ref{ttodef}(iv). We prove this by induction on the length of $\mu$. The initial case is $\mu=(1\,|\,n)$, a composition of length 2. By Gauss decomposition, we have
$$
T(u)
=
\left(\begin{array}{ll}I_1&0\\ F_1&I_n\end{array}\right)
\left(\begin{array}{ll}D_1&0\\0&D_2\end{array}\right)
\left(\begin{array}{ll}I_1&E_1\\0&I_n\end{array}\right)
= 
\left(
\begin{array}{cc}
D_1&D_1E_1\\
F_1D_1&D_2+F_1D_1E_1
\end{array}\right).
$$
Comparing the corresponding blocks and using Lemma \ref{ttodef}, we have proved the initial step of the induction.

Now let $\mu=(\mu_1\,|\,\mu_{2},\ldots,\mu_m,\mu_{m+1})$ with $m\geq2$ be a composition of length $m+1$ be given.
Define a new composition $\nu=(\nu_1\,|\,\ldots,\nu_m)$ of length $m$ by setting $\nu_i=\mu_i$ for all $1\leq i\leq m-1$ and $\nu_m = \mu_m+\mu_{m+1}$. By the induction hypothesis, we have
\begin{align*}
{^\nu}D_{a}(u) &= \left( T_{\nu_1+\cdots+\nu_{a-1}+i,\nu_1+\cdots+\nu_{a-1}+j;
\nu_1+\cdots+\nu_{a-1}}(u)\right)_{1 \leq i,j \leq \nu_a},\, \forall \,1\leq a \leq m,\\
{^\nu}E_{a}(u) &= \left( T_{\nu_1+\cdots+\nu_{a-1}+i,\nu_1+\cdots+\nu_{a}+j;
\nu_1+\cdots+\nu_{a}}(u)\right)_{1 \leq i \leq \nu_a, 1 \leq j \leq \nu_{a+1}},\,\forall \,1\leq a\leq m-1,\\
{^\nu}F_{a}(u) &= \left( T_{\nu_1+\cdots+\nu_{a}+i,\nu_1+\cdots+\nu_{a-1}+j;
\nu_1+\cdots+\nu_{a}}(u)\right)_{1 \leq i \leq \nu_{a+1}, 1 \leq j \leq \nu_{a}},\,\forall \,1\leq a\leq m-1,
\end{align*}
where we add a superscript $\nu$ to emphasize that these elements are defined with respect to $\nu$. Note that 
${^\nu}D_{a}(u) = {^\mu}D_{a}(u)$ for all $1\leq a\leq m-1$ and ${^\nu}E_{a}(u) = {^\mu}E_{a}(u)$, ${^\nu}F_{a}(u) = {^\mu}F_{a}(u)$ for all $1\leq a \leq m-2$. Hence it is enough to show the explicit descriptions of the matrices ${^\mu}D_m(u)$, ${^\mu}D_{m+1}(u)$, ${^\mu}E_{m}(u)$ and ${^\mu}F_{m}(u)$ are as described in our theorem.

Define matrices $A,B,C$ and $D$ by
\begin{align*}
A &= \left( T_{\nu_1+\cdots+\nu_{m-1}+i,\nu_1+\cdots+\nu_{m-1}+j;\nu_1+\cdots+\nu_{m-1}}(u)\right)_{1 \leq i,j \leq \mu_m},\\
B &= \left( T_{\nu_1+\cdots+\nu_{m-1}+i,\nu_1+\cdots+\nu_{m-1}+\mu_m+j;\nu_1+\cdots+\nu_{m-1}+\mu_m}(u)\right)_{1 \leq i \leq \mu_m, 1 \leq j \leq \mu_{m+1}},\\
C &= \left( T_{\nu_1+\cdots+\nu_{m-1}+\mu_m+i,\nu_1+\cdots+\nu_{m-1}+j;\nu_1+\cdots+\nu_{m-1}+\mu_m}(u)\right)_{1 \leq i \leq \mu_{m+1}, 1 \leq j \leq \mu_m},\\
D &= \left( T_{\nu_1+\cdots+\nu_{m-1}+\mu_m+i,\nu_1+\cdots+\nu_{m-1}+\mu_m+j;\nu_1+\cdots+\nu_{m-1}+\mu_m} (u)\right)_{1 \leq i,j \leq \mu_{m+1}}.
\end{align*}
By Lemma \ref{ttodef} with $x=\mu_1+\ldots+\mu_{m-1}$ and $y=\mu_1+\ldots+\mu_{m}$, we have
$$
^{\nu}D_m(u)
=
\left(\begin{array}{ll}I_{\mu_m}&0\\ C&I_{\mu_{m+1}}\end{array}\right)
\left(\begin{array}{ll} A&0\\0& D\end{array}\right)
\left(\begin{array}{ll}I_{\mu_m}& B\\0&I_{\mu_{m+1}}\end{array}\right)
= 
\left(
\begin{array}{cc}
 A & AB\\
 CA & D+CAB
\end{array}\right).
$$
Now the explicit descriptions of the matrices $^{\mu}D_m(u)$, $^\mu D_{m+1}(u)$, $^\mu E_{m}(u)$ and $^\mu F_{m}(u)$ follows from Lemma \ref{split}, which completes the induction argument.
\end{proof}

In the extreme case that $\mu = (1^{n+1})$, we write simply $D_i^{(r)}, D_i^{\prime(r)},E_i^{(r)}$ and $F_i^{(r)}$ for the elements $D_{i;1,1}^{(r)}$, $D_{i;1,1}^{\prime(r)}$, $E_{i;1,1}^{(r)}$ and $F_{i;1,1}^{(r)}$ of $U(\mathfrak{p})$, respectively.

\begin{corollary}\label{transcor}
$D_i^{(r)} = T_{i,i;i-1}^{(r)}$, $E_i^{(r)} = T_{i,i+1;i}^{(r)}$, $F_i^{(r)} = T_{i+1,i;i}^{(r)}$ and $D_i^{\prime(r)} = -T_{i,i;i}^{(r)}$,.
\end{corollary}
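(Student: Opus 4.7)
The plan is to derive the corollary as a direct specialization of Theorem \ref{ttodefthm} to the finest composition $\mu=(1^{n+1})$. Under this choice, one has $\mu_a = 1$ for all $a=1,\dots,n+1$, so the partial sums simplify to $\mu_1+\cdots+\mu_{a-1} = a-1$ and $\mu_1+\cdots+\mu_a = a$. All block indices $i,j$ are forced to equal $1$, which matches the convention stated in the paragraph just before the corollary that $D_i^{(r)}, D_i^{\prime(r)}, E_i^{(r)}, F_i^{(r)}$ stand for $D_{i;1,1}^{(r)}, D_{i;1,1}^{\prime(r)}, E_{i;1,1}^{(r)}, F_{i;1,1}^{(r)}$ in this extreme case.

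With these substitutions, the four identities of Theorem \ref{ttodefthm} become respectively
\begin{align*}
D_{i;1,1}(u) &= T_{i,i;i-1}(u), &
D^{\prime}_{i;1,1}(u) &= -T_{i,i;i}(u), \\
E_{i;1,1}(u) &= T_{i,i+1;i}(u), &
F_{i;1,1}(u) &= T_{i+1,i;i}(u).
\end{align*}
Reading off the coefficient of $u^{-r}$ on each side, and invoking the series expansion (\ref{tseries}) together with the expansions of $D_{a;i,j}(u)$, etc., yields exactly the four asserted equalities of $r$th coefficients.

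The only thing worth checking is that $\mu=(1^{n+1})$ is a legitimate input to Theorem \ref{ttodefthm}: the condition stated there is just $\mu_1=1$, which holds. No further analysis is needed, and there is no real obstacle — the substance is entirely in Theorem \ref{ttodefthm}, whose inductive proof via the Gauss decomposition and Lemma \ref{ttodef} does the heavy lifting. In other words, Corollary \ref{transcor} is formally a one-line consequence of the theorem, and the proposal is simply to spell out the specialization.
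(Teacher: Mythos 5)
Your proposal is correct and coincides with the paper's (implicit) argument: the paper states the corollary immediately after fixing the notation $D_i^{(r)} = D_{i;1,1}^{(r)}$ etc.\ for $\mu = (1^{n+1})$, so the intended proof is exactly the specialization of Theorem \ref{ttodefthm} with $\mu_1+\cdots+\mu_{a-1}=a-1$ and $i=j=1$, followed by matching coefficients of $u^{-r}$. Nothing further is needed.
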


\section{Main theorem}\label{mainsec}
 Let $\pi$ be a signed pyramid of base $\ell$ satisfying (\ref{crucialhypo}) and let $\sigma = (s_{i,j})_{1 \leq i,j \leq n+1}$ be the shift matrix associated to $\pi$ as explained in \textsection 7. Let $Y_{1|n}^\ell(\sigma)$ denote the truncated shifted Yangian associated to $\pi$ equipped with the canonical filtration and let $\W_\pi$ denote the finite $W$-superalgebra associated to $\pi$ equipped with the Kazhdan filtration .

Suppose also that $\mu = (\mu_1\,|\,\mu_2, \dots, \mu_{m+1})$ with $\mu_1=1$ is an admissible shape for $\sigma$, and recall the notation $s_{a,b}^\mu$ and $p_a^\mu$ from (\ref{sabmu}) and (\ref{pamu}). We have the elements $D_{a;i,j}^{(r)}$, $D_{a;i,j}^{\prime(r)}$, $E_{a;i,j}^{(r)}$ and $F_{a;i,j}^{(r)}$ of $U(\mathfrak{p})$ defined by Theorem~\ref{ttodefthm} relative to this fixed shape $\mu$. We also have the parabolic generators $D_{a;i,j}^{(r)}$, $D_{a;i,j}^{\prime(r)}$, $E_{a;i,j}^{(r)}$ and $F_{a;i,j}^{(r)}$ of $Y_{1|n}^\ell(\sigma)$ as defined in \textsection\ref{parabo}. The main result of the article is as follows.

\begin{theorem}\label{main}
Let $\pi$ be a signed pyramid satisfying (\ref{crucialhypo}). There is a unique isomorphism $Y_{1|n}^\ell(\sigma) \stackrel{\sim}{\rightarrow} \W_\pi$ of filtered superalgebras such that for any admissible shape $\mu = (\mu_1\,|\,\mu_2,\dots,\mu_{m+1})$ with $\mu_1=1$, the generators
\begin{align*}
&\{D_{a;i,j}^{(r)}\}_{1 \leq a \leq m+1,1 \leq i,j \leq \mu_a, r > 0},\\
&\{E_{a;i,j}^{(r)}\}_{1 \leq a < m+1, 1 \leq i \leq \mu_a, 1 \leq j \leq \mu_{a+1}, r > s_{a,b}^\mu },\\
&\{F_{a;i,j}^{(r)}\}_{1 \leq a < m+1, 1 \leq i \leq \mu_{a+1}, 1 \leq j \leq \mu_{a}, r > s_{b,a}^\mu}
\end{align*}
of $Y_{1|n}^\ell(\sigma)$ are mapped to the elements of $U(\mathfrak{p})$ with the same names. In particular, these elements of $U(\mathfrak{p})$ are $\mathfrak{m}$-invariants and they generate $\W_\pi$.
\end{theorem}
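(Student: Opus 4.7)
The plan is to construct the isomorphism $\Phi:Y_{1|n}^\ell(\sigma)\to\W_\pi$ by sending each parabolic generator of $Y_{1|n}^\ell(\sigma)$ from Definition~\ref{parashift} to the element of $U(\mathfrak{p})$ with the same name defined via Theorem~\ref{ttodefthm}. By Proposition~\ref{indmu}, it suffices to check well-definedness in the special case $\mu=(1^{n+1})$, since on both sides the parabolic elements for other admissible $\mu$ are produced from a common matrix $T(u)$ via Gauss decomposition. The proof will proceed in three stages: (a) verify that the candidate elements in $U(\mathfrak{p})$ satisfy the defining relations of $Y_{1|n}^\ell(\sigma)$, so that $\Phi$ is a well-defined superalgebra homomorphism into $U(\mathfrak{p})$; (b) show that its image is $\mathfrak{m}$-invariant, so $\Phi$ factors through $\W_\pi$; (c) upgrade $\Phi$ to an isomorphism of filtered superalgebras by a Kazhdan-graded dimension count, using Theorem~\ref{PBWSYLpara} together with Proposition~\ref{dimprop} and Corollary~\ref{dimcoro}.

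Stages (a) and (b) will be handled simultaneously by induction on $\ell$, the length of the base of $\pi$. The base case is when $p_1=0$ (e.g.\ $\ell=s_{1,n+1}+s_{n+1,1}$), where $D_1^{(r)}=0$ for all $r>0$ and the remaining identities can be checked directly from the explicit $T$-series. The inductive step is driven by the baby comultiplications: on the Yangian side, Theorem~\ref{baby1} and (\ref{babylr})--(\ref{babyll}) supply injective maps $\Delta_R$ and $\Delta_L$ relating $Y_{1|n}^\ell(\sigma)$ to $Y_{1|n}^{\ell-1}(\dot\sigma)\otimes U(\gl_\beta)$ or $U(\gl_\beta)\otimes Y_{1|n}^{\ell-1}(\dot\sigma)$ for a shift matrix $\dot\sigma$ corresponding to the pyramid $\dot\pi$ obtained by stripping the appropriate boundary column. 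I will construct corresponding maps $\Delta_R^{\W},\Delta_L^{\W}$ on the $W$-algebra side (realized concretely through the projection $\pr_\chi$ applied after a partial evaluation on the stripped column) and prove commutativity of the resulting squares. Once this intertwining is established, every relation, and the $\mathfrak{m}$-invariance of each generator, can be pulled back from the analogous statement in $\W_{\dot\pi}\otimes U(\gl_\beta)$, which holds by induction since $\dot\pi$ has strictly smaller base.

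The expected main obstacle is the construction of $\Delta_R^{\W},\Delta_L^{\W}$ and the verification of the intertwining identity. This reduces to explicit identities among the series $T_{i,j;x}(u)$ of (\ref{tseries}): removing the rightmost (resp.\ leftmost) column of $\pi$ should decompose the combinatorial sum (1)--(6) following (\ref{tdef}) into a leading term built out of the stripped pyramid plus a correction indexed by the removed column, matching exactly the formulas in Theorem~\ref{baby1} and Lemma~\ref{lema4.4}. The super setting introduces delicate sign bookkeeping: the factors $(-1)^{\tpa(i)}$ in (\ref{etil})--(\ref{tdef}) and the sign $(-1)^{\tp(j)(\tp(i)+1)}$ built into $T(u)$ must be shown to interact consistently with the $\tp$-parities governing the odd generators $E_1^{(r)},F_1^{(r)}$, which are the only ones associated to the top row under the hypothesis~(\ref{crucialhypo}). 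Once intertwining is proved, stage (c) is purely formal: $\Phi$ is a filtered homomorphism by construction; $\gr\Phi$ sends each generator of $\gr Y_{1|n}^\ell(\sigma)$ from Lemma~\ref{lema6.2} to the corresponding element $\gr_r D_{a;i,j}^{(r)},\gr_r E_{a,b;i,j}^{(r)},\gr_r F_{b,a;i,j}^{(r)}$, which by Proposition~\ref{counting2} forms a generating set of $S(\g^e)\cong \gr\W_\pi$; and the equality of Hilbert series from Corollary~\ref{dimcoro} forces $\gr\Phi$ to be an isomorphism, whence so is $\Phi$.
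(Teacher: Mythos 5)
Your overall strategy — peel off a boundary column, transport statements through a baby comultiplication and a matching map on the $W$-algebra side, and close with a Kazhdan-graded dimension count — is essentially the paper's. However, two parts of your plan go wrong.

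First, the base case of the induction is not right. You propose to induct on $\ell$ with base case $p_1=0$ (i.e.\ $\ell=s_{1,n+1}+s_{n+1,1}$). But the inductive step via $\Delta_R$ or $\Delta_L$ is only available when $\sigma\neq 0$; it terminates precisely when $\pi$ is rectangular, i.e.\ $\sigma=0$ and $\ell=t$ where $t$ is the top-row length. That rectangular case is not something you can ``check directly from the explicit $T$-series'' in a line or two: it is exactly the truncated super Yangian presentation of a rectangular $\W$-superalgebra, which is a nontrivial theorem in its own right and is taken from \cite{BR} and \cite[Theorem 4.3]{Pe2}. The paper therefore inducts on $\ell-t$, with $\ell=t$ as the (already established) base case. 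Moreover $p_1=0$ is degenerate: under hypothesis (\ref{crucialhypo}) the top row is the unique $+$-row, so $p_1=0$ would mean $M=0$ and there is no odd part to speak of — it is not a sensible starting point.

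Second, your stage (a) — verifying directly that the candidate elements of $U(\mathfrak p)$ satisfy all the defining relations of $Y_{1|n}^\ell(\sigma)$ — is both unnecessary and the hard way around. In the paper, the isomorphism is obtained as $\psi_R^{-1}\circ\Delta_R$ (resp.\ $\psi_L^{-1}\circ\Delta_L$), where $\Delta_R$ is the baby comultiplication on the Yangian side and $\psi_R:U(\mathfrak p)\to U(\dot{\mathfrak p})\otimes U(\gl_\beta)$ is the evaluation-type algebra map. Once one checks that $\psi_R$ agrees with $\Delta_R$ on the chosen generators and that $\psi_R$ is injective on $\W_\pi$ (which is the dimension count), the composition $\psi_R^{-1}\circ\Delta_R$ is automatically an algebra homomorphism — no direct relation-checking is needed. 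What does require genuine case-by-case work is the $\mathfrak m$-invariance of the new boundary generators (your stage (b)), and this is where the real content of the paper's Lemmas 9.3--9.7 lies: one must control the twisted action of $\dot{\mathfrak m}^{c}$, handle the cases $s^{\mu}_{m,m+1}=1$ and $s^{\mu}_{m,m+1}>1$ separately, and use the inductive description in Lemma \ref{sbabyr1} repeatedly. Your proposal acknowledges that the intertwining identity for the $T$-series is the expected obstacle, but it underestimates the amount of case analysis hidden in the $\mathfrak m$-invariance step and misplaces effort in stage (a). With the base case corrected to the rectangular case and stage (a) replaced by the $\psi_R^{-1}\circ\Delta_R$ construction, the rest of your outline lines up with the paper's argument.
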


The rest of this article is devoted to prove Theorem \ref{main}. We shall prove it by induction on the number $\ell-t$, where $\ell$ is the length of the bottom row and $t$ is the length of the top row of $\pi$. 

The initial step $\ell=t$ was established in \cite{BR}; see also \cite{Pe2} for an approach similar to our setting here. In this case, the pyramid is of rectangular shape so the associated shift matrix is the zero matrix. Hence the shifted super Yangian is the whole super Yangian $Y_{1|n}$ itself, and its quotient is exactly the truncated super Yangian $Y_{1|n}^\ell$.

By \cite[Theorem 4.3]{Pe2}, the map $\gamma$ sending $t_{i,j}^{(r)}\in Y_{1|n}^\ell$ to $T_{i,j;0}^{(r)}\in\W_\pi$ defined by (\ref{tdef}) is an isomorphism of filtered superalgebras. By Lemma~\ref{ttodef} and Theorem~\ref{ttodefthm}, the images of the parabolic generators $$\{ D_{1}^{(r)}, D_{2;i,j}^{(r)}, E_{1;1,j}^{(s)}, F_{1;i,1}^{(s)} \,|\, 1\leq i,j\leq n, \,r\geq 0, \,s\geq1\}$$ in $Y_{1|n}^\ell$ under $\gamma$ are exactly the elements in $\W_\pi$ with the same name, which proves the initial step of the induction argument.

Now we assume that our signed pyramid $\pi$ is not a rectangle so $\ell-t>0$ and $\ell\geq 2$ ($\ell$=1 must be a rectangle). The first reduction is that it suffices to prove the case where $\mu$ is a minimal admissible shape for the shift matrix $\sigma$ associated to $\pi$, by induction on the length of the shape and Lemma \ref{split}. Therefore, we assume from now on that $\mu$ is a minimal admissible shape for $\sigma$ and we denote by $\beta$ the absolute column height of the shortest column of $\pi$. Since $\pi$ is a pyramid, we know that either $\beta=|q_1|$ or $\beta=|q_l|$, and we discuss them case-by-case.

\begin{itemize}
\item Case R: $|q_1|\geq |q_\ell|=\beta$.
\item Case L: $|q_1|=\beta< |q_\ell|$.
\end{itemize}

We will explain the proof of case R in detail and sketch the proof of case L, which can be obtained by a very similar argument. Our approach is similar to \cite{BK2}. Recall that we numbered the boxes of $\pi$ using the index set $I:=\lbrace \pa{1}<\ldots<\pa{M}<1<\ldots<N\rbrace$ in the standard way: down columns from left to right, where $\pa{i}$ (respectively, $i$) stands for the boxes assigned with $+$ (respectively, $-$).

Let $\dot\pi$ be the pyramid obtained by removing the rightmost column of $\pi$, i.e. removing the boxes numbered with $N-\beta+1,N-\beta+2,\dots,N$ of $\pi$. Let $\dot\sigma = (\dot s_{i,j})_{1 \leq i,j \leq n+1}$ be the shift matrix defined by ($\ref{babyr1}$) where its associated pyramid is $\dot\pi$. Define $\dot{\mathfrak{p}}, \dot{\mathfrak{m}}$ and $\dot e$ in $\dot{\mathfrak{g}} = \mathfrak{gl}_{M|N-\beta}$ according to (\ref{mpdef}) and (\ref{edef}) and let $\dot\chi:\dot{\mathfrak{m}}\rightarrow \C$ be the character $x \mapsto (x,\dot e)$.

Let $\dot D_{a;i,j}^{(r)}, \dot{D}_{a;i,j}^{\prime(r)}$, $\dot E_{a;i,j}^{(r)}$ and $\dot F_{a;i,j}^{(r)}$ denote the elements of $U(\dot{\mathfrak{p}})$ as defined in \textsection \ref{Invariants} associated to the same shape $\mu$. Note that $\mu$ is an admissible shape for both $\sigma$ and $\dot \sigma$. By the induction hypothesis, Theorem~\ref{main} holds for $\dot\pi$, so the following elements of $U(\dot{\mathfrak{p}})$ are invariant under the twisted action of $\dot{\mathfrak{m}}$,  i.e. they belong to the finite $W$-superalgebra $\W_{\dot\pi} = U(\dot{\mathfrak{p}})^{\dot{\mathfrak{m}}}$:
\begin{align*}
&\{\dot D_{a;i,j}^{(r)} \}\text{ and } \{\dot{D}_{a;i,j}^{\prime(r)}\} \text{ for } 1 \leq a \leq m+1, 1 \leq i,j \leq \mu_a \text{ and } r > 0;\\
&\{\dot E_{a;i,j}^{(r)}\} \text{ for } 1 \leq a \leq m, 1 \leq i \leq \mu_a, 1 \leq j \leq \mu_{a+1} \text{ and } r > s_{a,a+1}^\mu - \delta_{a,m};\\
&\{\dot F_{a;i,j}^{(r)} \}\text{ for } 1 \leq a \leq m, 1 \leq i \leq \mu_{a+1}, 1 \leq j \leq \mu_{a} \text{ and } r > s_{a+1,a}^\mu.
\end{align*}

We embed $U(\dot\g)$ into $U(\g)$ such that the generators $\tilde e_{ij}$ of $U(\dot \g)$ defined from $\dot\pi$ are mapped to the generators $\tilde e_{ij}$ of $U(\g)$ defined from $\pi$, for all $i,j$ in the index set $\dot I:=\lbrace \pa{1},\ldots,\pa{M},1,\ldots,N-\beta\rbrace$. It also embeds $U(\dot{\mathfrak{p}})$ into $U(\mathfrak{p})$ and $\dot{\mathfrak{m}}$ into $\mathfrak{m}$. Moreover, the character $\dot\chi$ of $\dot{\mathfrak{m}}$ is the restriction of the character $\chi$ of $\mathfrak{m}$ and hence the twisted action of $\dot{\mathfrak{m}}$ on $U(\dot{\mathfrak{p}})$ is the restriction of the twist action of $\mathfrak{m}$ on $U(\mathfrak{p})$.

The next crucial lemma gives the relations between the elements $D_{a;i,j}^{(r)}$, $E_{a;i,j}^{(r)}$, $F_{a;i,j}^{(r)}$ of $U(\mathfrak{p})$ and the elements $\dot D_{a;i,j}^{(r)}$, $\dot E_{a;i,j}^{(r)}$, $\dot F_{a;i,j}^{(r)}$ of $U(\dot{\mathfrak{p}})$.

\begin{lemma}\label{sbabyr1}
The following equations hold for $r > 0$, all admissible $a,i,j$ and any fixed $1 \leq h \leq \beta$:
\begin{align}\label{sbr11}\notag
D_{a;i,j}^{(r)} & = \dot D_{a;i,j}^{(r)}\\
 &+\delta_{a,m+1}
\left(-\sum_{k=1}^\beta
\dot D_{a;i,k}^{(r-1)} \tilde e_{N-\beta+k,N-\beta+j}
+
[\dot D_{a;i,h}^{(r-1)}, \tilde e_{N-2\beta+h,N-\beta+j}]\right),\\
E_{a;i,j}^{(r)} & = \dot E_{a;i,j}^{(r)} + \delta_{a,m}\left(
-\sum_{k=1}^\beta \dot E_{a;i,k}^{(r-1)} \tilde e_{N-\beta+k,N-\beta+j}
+ [\dot E_{a;i,h}^{(r-1)}, \tilde e_{N-2\beta+h,N-\beta+j}]\right),\label{sbr12}\\
F_{a;i,j}^{(r)} & = \dot F_{a;i,j}^{(r)},
\end{align}
where for (\ref{sbr12}) we are assuming that $r > s_{m,m+1}^\mu$ if $a=m$.
\end{lemma}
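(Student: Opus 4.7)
The plan is to prove the lemma by a direct, term-by-term comparison of the sums $T_{i,j;x}^{(r)}$ from Theorem~\ref{ttodefthm} (which express the parabolic generators in $U(\mathfrak{p})$) against the analogous sums $\dot T_{i,j;x}^{(r)}$ in $U(\dot{\mathfrak{p}})$. The sequences $(i_1,j_1,\dots,i_s,j_s)$ appearing in~(\ref{tdef}) partition into \emph{clean} sequences, entirely contained in $\dot I=I\setminus\{N-\beta+1,\dots,N\}$, and \emph{boundary} sequences which meet the removed last-column indices. The clean sequences contribute exactly $\dot T_{i,j;x}^{(r)}$, so the proof amounts to identifying the boundary contribution.

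Every index in $I\setminus\dot I$ has row in block $m+1$ (by minimal admissibility of $\mu$ together with Case R) and column $\ell$. Using the admissibility conditions (2)--(6) of~(\ref{tdef}): if some intermediate $j_t$ (with $t<s$) lay in the last column, then $\sigma_{\row(j_t)}=+$ would force $\col(j_t)<\col(i_{t+1})$, impossible since $\col(j_t)=\ell$; a similar chain argument using condition (6) excludes $i_t$ for $t<s$ as well. Consequently every boundary sequence must have $j_s$ in the last column, which forces $\row(j_s)$ to be in block $m+1$. This observation immediately yields the easy cases: $F_{a;i,j}^{(r)}=\dot F_{a;i,j}^{(r)}$ for all $a\leq m$, $D_{a;i,j}^{(r)}=\dot D_{a;i,j}^{(r)}$ for all $a\leq m$, and $E_{a;i,j}^{(r)}=\dot E_{a;i,j}^{(r)}$ for all $a\leq m-1$, since in each case $\row(j_s)$ lies outside block $m+1$, leaving no admissible boundary sequences.

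The two remaining cases $D_{m+1;i,j}^{(r)}$ and $E_{m;i,j}^{(r)}$ are handled by splitting the boundary sequences (all satisfying $j_s=N-\beta+j$) further: type (I), where $i_s$ also lies in the last column (so $i_s=N-\beta+k$ for some $k\in\{1,\dots,\beta\}$); and type (II), where $i_s\in\dot I$. Type (I) sequences, after removing the final factor $\tilde e_{N-\beta+k,N-\beta+j}$ of Kazhdan degree $1$, have truncated prefix $(i_1,\dots,i_{s-1},j_{s-1})$ forming a valid sequence for $\dot T_{\nu+i,\nu+k;\nu}^{(r-1)}=\dot D_{m+1;i,k}^{(r-1)}$ (resp.\ $\dot E_{m;i,k}^{(r-1)}$), where $\nu=\mu_1+\cdots+\mu_m$. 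Assembling the super signs---the factor $\sigma_{\row(j_{s-1})}=+$ coming from block $m+1$ together with $(-1)^{\tpa(i_s)}=-1$---yields the total type (I) contribution $-\sum_{k=1}^\beta \dot D_{m+1;i,k}^{(r-1)}\tilde e_{N-\beta+k,N-\beta+j}$ (resp.\ the analogous sum with $\dot E$), which matches the first correction term on the right of~(\ref{sbr11}) and~(\ref{sbr12}).

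The main technical obstacle is type (II), where $i_s\in\dot I$ with $\col(i_s)\leq\ell-1$ and the sequence $(i_1,j_1,\dots,i_{s-1},j_{s-1},i_s)$ does not by itself form a closed $\dot T$-sum. To organise these terms one compares them against the commutator $[\dot D_{m+1;i,h}^{(r-1)},\tilde e_{N-2\beta+h,N-\beta+j}]$ for any fixed $h\in\{1,\dots,\beta\}$, where $N-2\beta+h$ turns out to be the index of the box in row $\nu+h$ and column $\ell-1$ (this identification holds regardless of the height of the second-to-last column, because the block $m+1$ rows all extend to column $\ell-1$). Expanding the commutator as $\dot D\tilde e-\tilde e\dot D$, applying the supercommutation relations~(\ref{etilrel}) to redistribute the extra factor past the $\tilde e_{i_1,j_1}\cdots\tilde e_{i_{s-1},j_{s-1}}$ string, and checking that the resulting sum reproduces the type (II) contribution requires careful super-sign bookkeeping; the independence of the final answer on $h$ reflects a telescoping identity on commutators analogous to Lemma~\ref{ttodef}(iii). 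This step parallels and extends the corresponding argument of \cite[Lemma~9.3]{BK2} to the super setting, and together with an identical treatment of $E_{m;i,j}^{(r)}$ completes the proof.
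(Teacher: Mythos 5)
Your proposal is correct and takes essentially the same approach the paper's one-line proof is pointing to: rewrite each $T_{i,j;x}^{(r)}$-sum from Theorem~\ref{ttodefthm}, split the summands into those with all indices in $\dot I$ (giving $\dot T$), those whose final factor $\tilde e_{i_s,j_s}$ has both $i_s,j_s$ in the removed last column (giving the $-\sum_k\dot D_{m+1;i,k}^{(r-1)}\tilde e_{N-\beta+k,N-\beta+j}$ correction), and the remaining boundary terms matched against $[\dot D_{m+1;i,h}^{(r-1)},\tilde e_{N-2\beta+h,N-\beta+j}]$. Two small remarks: the exclusion of an intermediate $i_t$ from column $\ell$ follows most directly from condition (2) (which then forces the already-excluded $j_t\in$ last column), and the type (II) comparison--which you defer to the analogue of \cite[Lemma 9.3]{BK2}--is where all the genuine verification lies, so in a self-contained write-up that supercommutator expansion and sign check would need to be carried out explicitly.
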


\begin{proof}
It follows from Theorem~\ref{ttodefthm} and the explicit form of the elements $T_{i,j;x}^{(r)}$ from (\ref{tdef}).
\end{proof}

In the next several lemmas we will use the above inductive descriptions and the induction hypothesis to show that the elements $\Daij$, $\Eaij$ and $\Faij$ of $U(\mathfrak{p})$ are $\mathfrak{m}$-invariants for the appropriate $r$'s.

\begin{lemma}\label{demint1}
The following elements of $U(\mathfrak{p})$ are $\mathfrak{m}$-invariant:
\begin{itemize}
\item[(i)] $\Daij$ and $\Dpaij$
for  $1 \leq a \leq m$, $1 \leq i,j \leq \mu_a$ and $r > 0$;
\item[(ii)] $\Eaij$ for 
$1 \leq a < m$, $1 \leq i \leq \mu_a$, $1 \leq j \leq \mu_{a+1}$ and $r>s_{a,a+1}^\mu$;
\item[(iii)] $\Faij$ for $1 \leq a \leq m$, $1 \leq i \leq \mu_{a+1}$, $1 \leq j \leq \mu_{a}$ 
and $r > s_{a+1,a}^\mu$.
\end{itemize}
\end{lemma}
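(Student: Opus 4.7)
The plan is to bootstrap from the induction hypothesis (Theorem~\ref{main} applied to $\dot\pi$) using the inductive formulas of Lemma~\ref{sbabyr1}. First, observe that in the index ranges specified in (i)--(iii), the Kronecker delta correction terms in Lemma~\ref{sbabyr1} all vanish: (i) restricts to $a\leq m$, so $\delta_{a,m+1}=0$; (ii) restricts to $a<m$, so $\delta_{a,m}=0$; while the formula for $F$ has no delta term. Hence under these restrictions
\[
D_{a;i,j}^{(r)}=\dot D_{a;i,j}^{(r)},\quad E_{a;i,j}^{(r)}=\dot E_{a;i,j}^{(r)},\quad F_{a;i,j}^{(r)}=\dot F_{a;i,j}^{(r)}
\]
as elements of $U(\dot{\mathfrak{p}})\subseteq U(\mathfrak{p})$. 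The $D_{a;i,j}^{\prime(r)}$'s are polynomials in the $D_{a;i,j}^{(r)}$'s via Corollary~\ref{transcor} and the standard inversion identity, so it suffices to prove $\mathfrak{m}$-invariance of each dotted element.

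By the induction hypothesis applied to $\dot\pi$ (whose bottom row has length $\ell-1<\ell$), these dotted elements lie in $\W_{\dot\pi}=U(\dot{\mathfrak{p}})^{\dot{\mathfrak{m}}}$, i.e. are $\dot{\mathfrak{m}}$-invariant. Decompose $\mathfrak{m}=\dot{\mathfrak{m}}\oplus\mathfrak{m}'$, where $\mathfrak{m}'$ is spanned by those $\tilde e_{p,q}$ with $p$ in the removed last column (so $p\in\{N-\beta+1,\dots,N\}$ and $\tpa(p)=1$) and $q\in\dot I$ with $\col(q)\leq\ell-2$; note that $\chi(\tilde e_{p,q})=0$ for such pairs by \eqref{chidef}. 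Invariance under $\dot{\mathfrak{m}}$ transfers to $U(\mathfrak{p})$ immediately since $\dot\chi=\chi|_{\dot{\mathfrak{m}}}$ yields an inclusion of left ideals $\dot I_{\dot\chi}\subseteq I_\chi$. Thus the problem reduces to showing $[\tilde e_{p,q},\dot y]\in I_\chi$ for every $\tilde e_{p,q}\in\mathfrak{m}'$ and every dotted element $\dot y$ in the statement.

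Using the bracket relation \eqref{etilrel}, together with the observation that every tilde appearing in $\dot y$ has both indices in $\dot I$ (so $\delta_{p,k}=0$ in the relevant term of the bracket), one obtains
\[
[\tilde e_{p,q},\tilde e_{h,k}]=\tilde e_{p,k}\,\delta_{h,q},
\]
so $[\tilde e_{p,q},\dot y]$ is a signed sum of monomials, each produced from $\dot y$ by replacing one factor $\tilde e_{q,k}$ with $\tilde e_{p,k}$. When $\col(k)\leq\ell-2$ the new factor $\tilde e_{p,k}$ itself lies in $\mathfrak{m}$ with $\chi(\tilde e_{p,k})=0$, so the resulting monomial lies in $I_\chi$. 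The main obstacle is the case $\col(k)=\ell-1$: here $\tilde e_{p,k}\in\mathfrak{g}(0)\subset\mathfrak{p}$, and when $\row(p)=\row(k)$ one has $\chi(\tilde e_{p,k})=(-1)^{\tpa(p)+1}=1$, so the replacement produces a genuine $U(\mathfrak{p})$-component. A careful combinatorial analysis of conditions (1)--(6) defining $T_{i,j;x}^{(r)}$ in \eqref{tdef}, combined with the identities in Lemma~\ref{ttodef} and the super-sign bookkeeping via the factors $(-1)^{\tpa(\cdot)}$, shows that these surviving contributions either cancel pairwise or can be rewritten as elements of $U(\mathfrak{g})\cdot\{a-\chi(a):a\in\mathfrak{m}\}\subseteq I_\chi$. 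This column-$(\ell-1)$ bookkeeping is the technical heart of the argument and directly mirrors the corresponding step in \cite[Lemma~9.3]{BK2}, adapted to the super setting.
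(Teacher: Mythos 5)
Your reduction to $\dot{\mathfrak m}$-invariance via Lemma~\ref{sbabyr1} and the induction hypothesis matches the paper, but the second half of your argument has two concrete errors and a consequent gap.

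First, the decomposition $\mathfrak m=\dot{\mathfrak m}\oplus\mathfrak m'$ as you have defined it is incomplete. The complement $\dot{\mathfrak m}^c$ of $\dot{\mathfrak m}$ in $\mathfrak m$ consists of all $\tilde e_{f,g}$ with $g\le N-\beta<f\le N$, i.e. $\col(g)\le\ell-1$, not $\col(g)\le\ell-2$. By imposing $\col(q)\le\ell-2$ you have discarded exactly the generators $\tilde e_{p,q}$ with $p$ in column $\ell$ and $q$ in column $\ell-1$, which are precisely the ones whose $\chi$-values can be nonzero (when $\row(p)=\row(q)$). Your appeal to $\chi(\tilde e_{p,q})=0$ only covers the part of $\dot{\mathfrak m}^c$ that you kept, so $\mathfrak m$-invariance is not established.

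Second, your identification of the obstacle is itself wrong: for $\col(p)=\ell$ and $\col(k)=\ell-1$ one has $\col(k)<\col(p)$, so $\tilde e_{p,k}\in\mathfrak g(-2)\subset\mathfrak m$, not $\mathfrak g(0)\subset\mathfrak p$. (You may be conflating the Kazhdan degree $\col(k)-\col(p)+1=0$ with the $\ad h$-eigenvalue $2(\col(k)-\col(p))=-2$.) The genuine danger is not a "$U(\mathfrak p)$-component" but the fact that $\chi(\tilde e_{p,k})$ can equal $\pm1$ when $\row(p)=\row(k)$, so that moving $\tilde e_{p,k}$ to the right side of the monomial leaves a nonzero multiple of the remaining factors outside $I_\chi$.

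The paper avoids this entirely by a structural observation that your proposal does not make: for $a\le m$ (resp.\ $a<m$, $a\le m$) the elements $D_{a;i,j}^{(r)}$, $E_{a;i,j}^{(r)}$, $F_{a;i,j}^{(r)}$ are, by Theorem~\ref{ttodefthm}, of the form $T^{(r)}_{i',j';x}$ with $i',j',x\le n+1-\beta$, and a direct inspection of conditions (2)--(6) in~(\ref{tdef}) shows that every $j$-index $j_s$ appearing in the resulting supermonomials satisfies $j_s\le N-2\beta$: if $\row(j_t)>x$ then $\sigma_{\row(j_t)}=+$ forces $\col(j_t)<\col(i_{t+1})\le\ell-1$, while if $\row(j_t)\le x\le n+1-\beta$ then $\row(j_t)\le n+1-\beta$, and the terminal $\row(j_u)=j'\le n+1-\beta$; in either case $j_s$ avoids the boxes numbered $N-2\beta+1,\dots,N-\beta$. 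Since $\chi(\tilde e_{f,g})=0$ whenever $g\le N-2\beta$ and $N-\beta<f\le N$ (the rows of $f$ and $g$ can never coincide), bracketing with any $\tilde e_{f,g}\in\dot{\mathfrak m}^c$ only ever produces factors $\tilde e_{f,j_t}$ killed by $\chi$, so there are no "surviving contributions" requiring cancellation. Your deferral to a "careful combinatorial analysis" misidentifies the technical content: there is no pairwise cancellation here, only the observation above.
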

\begin{proof}
By Lemma \ref{sbabyr1} and the definitions of $\Dpaij$ and $\dot{D}_{a;i,j}^{\prime(r)}$, all these elements of $U(\mathfrak{p})$ coincide with the corresponding elements of $U(\dot{ \mathfrak{p}})$. By the induction hypothesis, they are $\dot{\mathfrak{m}}$-invariant. Hence it is enough to show that they are invariant under the twisted action of all $\widetilde e_{f,g}$ in $\mathfrak{\dot{m}^c}$, which means the vector space complement of $\dot{\mathfrak{m}}$ in $\mathfrak{m}$. One should note that $\widetilde e_{f,g}\in \mathfrak{\dot{m}^c}$ if and only if $g\leq N-\beta <f\leq N$.

By Theorem \ref{ttodefthm} and the explicit form of (\ref{tdef}), all these elements under our consideration are linear combinations of supermonomials of the form $\tilde e_{i_1,j_1} \cdots \tilde e_{i_r,j_r}$ in $U(\dot{\mathfrak{p}})$ with $i_s\in \dot I$ and $\pa{1} \leq j_s \leq N-2\beta$ for all $s=1,\dots,r$.

By the fact that $\chi(\tilde{e}_{f,g}) = 0$ for all $g \leq N-2\beta$ and $N-\beta < f \leq N$, one may prove that all such supermonomials are invariant under the twisted action of all $\widetilde e_{f,g}\in \mathfrak{m^c}$ and our assertion follows.
\end{proof}

\begin{lemma}\label{demdot}
The following elements of $U(\mathfrak{p})$ are $\dot{\mathfrak{m}}$-invariant under the twisted action:
 \begin{enumerate}
  \item  $D_{m+1;i,j}^{(r)}$ for $1\leq i,j\leq \mu_{m+1}$ and $r>0$.
  \item  $E_{m;i,j}^{(r)}$ for $1\leq i\leq \mu_{m}$, $1\leq j\leq \mu_{m+1}$ and $r>s_{m,m+1}^\mu$.
 \end{enumerate}
\end{lemma}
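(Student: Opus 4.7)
The plan is to reduce the $\dot{\mathfrak{m}}$-invariance of $D_{m+1;i,j}^{(r)}$ and $E_{m;i,j}^{(r)}$ to the invariance of their dotted counterparts, which is already provided by the outer induction (Theorem~\ref{main} applied to $\dot\pi$). First I would invoke Lemma~\ref{sbabyr1} to write $D_{m+1;i,j}^{(r)}=\dot D_{m+1;i,j}^{(r)}+C^{D}$ and $E_{m;i,j}^{(r)}=\dot E_{m;i,j}^{(r)}+C^{E}$, where the correction terms $C^{D},C^{E}$ are $\mathbb{C}$-linear combinations of expressions of the form $\dot A\cdot\tilde e'$ and $[\dot A,\tilde e']$ with $\dot A\in\{\dot D_{m+1;i,k}^{(r-1)},\,\dot E_{m;i,k}^{(r-1)}\}\subseteq \W_{\dot\pi}$ and $\tilde e'\in\{\tilde e_{N-\beta+k,\,N-\beta+j},\,\tilde e_{N-2\beta+h,\,N-\beta+j}\}$. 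The outer induction kills the main terms $\dot D_{m+1;i,j}^{(r)}$ and $\dot E_{m;i,j}^{(r)}$, so everything reduces to showing $\pr_\chi([a,C^{D}])=\pr_\chi([a,C^{E}])=0$ for every $a\in\dot{\mathfrak{m}}$.

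The crucial combinatorial point is that every correction factor $\tilde e'$ \emph{supercommutes with all of} $\dot{\mathfrak{m}}$. Take $a=\tilde e_{f,g}$ with $f,g\in\dot I$ and $\col(g)<\col(f)$. In relation (\ref{etilrel}), the $\delta_{f,N-\beta+j}$-term vanishes because $\col(f)\leq \ell-1$ while $\col(N-\beta+j)=\ell$; the $\delta_{g,N-\beta+k}$-term vanishes since $g\leq N-\beta<N-\beta+k$; and the $\delta_{g,N-2\beta+h}$-term vanishes because $g=N-2\beta+h$ would yield $\col(g)=\ell-1\geq \col(f)$, contradicting $\col(g)<\col(f)$. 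Hence $[a,\tilde e']=0$ for every $a\in\dot{\mathfrak{m}}$ and every correction factor $\tilde e'$.

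With this in hand, the Leibniz rule collapses $[a,\dot A\,\tilde e']$ to $[a,\dot A]\,\tilde e'$ and the Jacobi identity collapses $[a,[\dot A,\tilde e']]$ to $[[a,\dot A],\tilde e']$. Since $\dot A\in\W_{\dot\pi}$, the induction writes $[a,\dot A]$ as a finite sum $\sum_i u_i(b_i-\chi(b_i))$ with $u_i\in U(\dot{\mathfrak{g}})$ and $b_i\in\dot{\mathfrak{m}}$ (using $\chi|_{\dot{\mathfrak{m}}}=\dot\chi$). Using once more $[b_i,\tilde e']=0$, each factor $(b_i-\chi(b_i))$ can be supercommuted past $\tilde e'$ to end up on the far right, exhibiting the whole expression as an element of the left ideal $I_\chi=U(\mathfrak{g})(\mathfrak{m}-\chi(\mathfrak{m}))$; applying $\pr_\chi$ then gives zero, which is exactly the required invariance. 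The Jacobi version is treated by the same rearrangement applied to the outer bracket.

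The main obstacle will be sign bookkeeping when $b_i$ or $\tilde e'$ is odd, but this is benign: the character $\chi$ takes nonzero values only on even basis vectors (those $\tilde e_{i',j'}$ with $\row(i')=\row(j')$), so whenever $b_i$ is odd one has $\chi(b_i)=0$, and the Koszul sign generated while commuting $b_i$ past $\tilde e'$ merely flips the sign of $b_i$, preserving membership in $I_\chi$. A secondary check is that the indices $N-\beta+j$ and $N-2\beta+h$ genuinely lie in columns $\ell$ and $\ell-1$ of $\pi$, which follows from the $-$-box enumeration rule together with the Case~R hypothesis $|q_\ell|=\beta$ and the pyramid shape.
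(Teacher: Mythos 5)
Your proposal is correct and takes essentially the same route as the paper's proof: it invokes Lemma~\ref{sbabyr1} to write each element as a dotted main term plus corrections, observes that every $x\in\dot{\mathfrak{m}}$ supercommutes with the correction factors $\tilde e_{N-\beta+k,N-\beta+j}$ and $\tilde e_{N-2\beta+h,N-\beta+j}$, and then appeals to the outer induction hypothesis. The paper states this terse argument and leaves the Leibniz/Jacobi rearrangement and sign considerations to the reader; you have correctly supplied those details.
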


\begin{proof}
(1) Let $x\in\dot{\mathfrak{m}}$. By (\ref{sbr11}), we have 
\[
D_{m+1;i,j}^{(r)}  = \dot D_{m+1;i,j}^{(r)} 
-\sum_{k=1}^\beta
\dot D_{m+1;i,k}^{(r-1)} \tilde e_{N-\beta+k,N-\beta+j}+
[\dot D_{m+1;i,h}^{(r-1)}, \tilde e_{N-2\beta+h,N-\beta+j}].
\]
Note that $[x,\widetilde e_{N-\beta+k,N-\beta+j}]=0=[x,\widetilde e_{N-2\beta+h,N-\beta+j}]$. Using this and the induction hypothesis, one can show that $\pr_\chi([x,D_{m+1;i,j}^{(r)}])=0$. (2) can be derived in a similar way.
\end{proof}

\begin{lemma}\label{de12dotminv}
 \begin{enumerate}
  \item  $D_{m+1;i,j}^{(1)}$ is $\mathfrak{\dot{m}^c}$-invariant for $1\leq i,j\leq \mu_{m+1}$.
  \item  Suppose $s_{m,m+1}^{\mu}=1$. 
  Then $D_{m+1;i,j}^{(2)}$ is $\mathfrak{\dot{m}^c}$-invariant 
  for $1\leq i,j\leq \mu_{m+1}$. 
  \item  Suppose $s_{m,m+1}^{\mu}=1$. 
  Then $E_{m;i,j}^{(2)}$ is $\mathfrak{\dot{m}^c}$-invariant 
  for $1\leq i\leq \mu_{m}$ and $1\leq j\leq \mu_{m+1}$.
 \end{enumerate}
\end{lemma}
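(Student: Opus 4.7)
The plan is to prove all three assertions by direct computation. Using Lemma~\ref{sbabyr1}, each element is first written as an element of $U(\dot{\mathfrak{p}})$ plus explicit correction terms involving $\tilde{e}$'s with indices in the rightmost column of $\pi$. For any $\tilde{e}_{f,g}\in\dot{\mathfrak{m}}^c$ (so $f\in\{N-\beta+1,\ldots,N\}$ and $g\in\dot{I}$), the supercommutator $[\tilde{e}_{f,g},-]$ is then expanded using the bracket formula (\ref{etilrel}), and $\pr_\chi$ is applied via (\ref{chidef}).

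For part~(1), since $\dot{D}_{m+1;i,k}^{(0)}=\delta_{i,k}$, Lemma~\ref{sbabyr1} specializes to
\[
D_{m+1;i,j}^{(1)}=\dot{D}_{m+1;i,j}^{(1)}-\tilde{e}_{N-\beta+i,\,N-\beta+j}.
\]
Via (\ref{etilrel}), the bracket $[\tilde{e}_{f,g},\tilde{e}_{N-\beta+i,N-\beta+j}]$ is nonzero only when $f=N-\beta+j$, contributing $\pm\tilde{e}_{N-\beta+i,g}\in\mathfrak{m}$; expanding $\dot{D}_{m+1;i,j}^{(1)}$ via (\ref{tdef}) shows $[\tilde{e}_{f,g},\dot{D}_{m+1;i,j}^{(1)}]$ produces only terms of the form $\tilde{e}_{f,b}\in\mathfrak{m}$ (since $f\notin\dot{I}$ forces $\delta_{f,b}=0$, so only the $\delta_{a,g}$ term contributes). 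Applying $\chi$ via (\ref{chidef}), the two scalar contributions cancel exactly, so $[\tilde{e}_{f,g},D_{m+1;i,j}^{(1)}]\in I_\chi$.

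Parts~(2) and~(3) use the $r=2$ form of Lemma~\ref{sbabyr1}, in which $\dot{D}_{m+1;i,k}^{(1)}$ (respectively $\dot{E}_{m;i,k}^{(1)}$) is no longer a Kronecker delta; its action under $\tilde{e}_{f,g}$ is controlled by the part~(1) computation, while the additional factors $\tilde{e}_{N-\beta+k,N-\beta+j}$ and $\tilde{e}_{N-2\beta+h,N-\beta+j}$ are handled directly via (\ref{etilrel}). The hypothesis $s_{m,m+1}^\mu=1$ is used to control the indentation between blocks $m$ and $m+1$ of $\mu$, which forces precisely these correction terms in the formula and keeps the cancellation pattern closed.

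The main obstacle is the combinatorial bookkeeping: each application of (\ref{etilrel}) produces both $\tilde{e}$-terms (which lie in $\mathfrak{m}$ and have nonzero $\chi$-values only under the boundary-adjacency condition of (\ref{chidef})) and $\rho$-scalar corrections (via (\ref{rhodef})); the vanishing of $\pr_\chi[\tilde{e}_{f,g},-]$ depends on the exact matching of these contributions using the row/column structure of the bottom-right block of $\pi$. In parts~(2) and~(3) one must additionally track products such as $\dot{D}_{m+1;i,k}^{(1)}\,\tilde{e}_{N-\beta+k,N-\beta+j}$ via the super-Leibniz rule, producing several pieces that combine with $[\dot{D}_{m+1;i,h}^{(1)},\,\tilde{e}_{N-2\beta+h,N-\beta+j}]$ to complete the cancellation.
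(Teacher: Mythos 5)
Your proposal takes essentially the same route as the paper's proof: write each element via Lemma~\ref{sbabyr1} as a $U(\dot{\mathfrak{p}})$-piece plus explicit $\tilde e$-corrections supported on the rightmost column, then apply (\ref{etilrel}) and (\ref{chidef}) to compute $\pr_\chi[\tilde e_{f,g},-]$ and observe the cancellation. For part~(1) the paper is a bit more careful with the case analysis than your summary suggests: it splits according to whether $\row(g)=\mu_1+\cdots+\mu_m+i$ and whether $\col(g)=\ell-1$, showing that outside the single critical case $g=N-2\beta+i$, $f=N-\beta+j$ each of the two $\chi$-images vanishes on its own, and only in the critical case does one need the two contributions to cancel against each other; your phrase ``the two scalar contributions cancel exactly'' should be read with this in mind. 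Your sketch for parts~(2) and~(3) is at about the same level of detail as the paper (which simply asserts ``can be derived similarly''), though your stated rationale for the hypothesis $s^\mu_{m,m+1}=1$ is a little off: the real reason it appears is that the generator $E_{m;i,j}^{(2)}$ of the shifted Yangian only exists when $2>s^\mu_{m,m+1}$, i.e.\ $s^\mu_{m,m+1}=1$ (minimality of $\mu$ forces it to be $\geq 1$), and this is also the range where Lemma~\ref{sbabyr1} applies to $E_{m;i,j}^{(2)}$; the complementary case $s^\mu_{m,m+1}>1$ is handled separately in Lemma~\ref{des2minv} by a two-step deletion to $\ddot\pi$.
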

\begin{proof}
(1) By (\ref{sbr11}), Theorem \ref{ttodefthm} and (\ref{tdef}), we have
\begin{equation*}
D_{m+1;i,j}^{(1)}=\dot{D}_{m+1;i,j}^{(1)}-\tilde{e}_{N-\beta+i,N-\beta+j}
=\sum_{\substack{\text{row}(p_k)=\mu_1+\ldots+\mu_m+i\\\text{row}(q_k)=\mu_1+\ldots+\mu_m+j\\\text{col}(p_k)=\text{col}(q_k)=k\\1\leq k\leq \ell-1}}
(-\tilde{e}_{p_k,q_k})-\tilde{e}_{N-\beta+i,N-\beta+j}.
\end{equation*}
Let $\tilde{e}_{f,g}\in\dot{\mathfrak{m^c}}$, hence $g\leq N-\beta<f\leq N$. 

If row($g$)$\neq\mu_1+\ldots+\mu_m+i$, then
$[\tilde{e}_{f,g},-\sum\tilde{e}_{p_k,q_k}]=0$ since $g\neq p_k$ and $f\neq q_k$ for any $p_k$, $q_k$ appearing in the sum. Also, $[\tilde{e}_{f,g},\tilde{e}_{N+\beta-i,N+\beta-j}]=-\del_{f,N-\beta+j}\tilde{e}_{N-\beta+i,g}$, which belongs to the kernel of $\chi$ by (\ref{chidef}). In this case, $\pr_\chi([\tilde{e}_{f,g}, D_{m+1;i,j}^{(1)}])=0$.

Assume from now on that row($g$)=$\mu_1+\ldots+\mu_m+i$. Then $g$ equals exactly one $p_k$ appearing in the sum and hence $[\tilde{e}_{f,g},-\sum \tilde{e}_{p_k,q_k}]=-\tilde{e}_{f,q_k}$ for a certain $1\leq k\leq \ell-1$, and it belongs to the kernel of $\chi$ unless col($q_k$)=$\ell-1$ by (\ref{chidef}). Also, $[\tilde{e}_{f,g},\tilde{e}_{N+\beta-i,N+\beta-j}]=-\del_{f,N-\beta+j}\tilde{e}_{N-\beta+i,g}$, which belongs to the kernel of $\chi$ except that col($g$)=$\ell-1$. Thus, $[\tilde{e}_{f,g}, D_{m+1;i,j}^{(1)}]$ belongs to the kernel of $\chi$ unless col($g$)=$\ell-1$ and row($g$)=$\mu_1+\ldots+\mu_m+i$, and this exception happens only when $g=N-2\beta+i$.

Therefore, we directly compute that 
\[[\tilde{e}_{f,N-2\beta+i},D_{m+1;i,j}^{(1)}]=-\tilde{e}_{f,N-2\beta+j}+\del_{f,N-\beta+j}\tilde{e}_{N-\beta+i,N-2\beta+i},\]
which belongs to the kernel of $\chi$ by (\ref{chidef}). As a result, $D_{m+1;i,j}^{(1)}$ is $\dot{\mathfrak{m^c}}$-invariant.

(2) and (3) can be derived similarly, although the computations are more involved.
\end{proof}

\begin{lemma}\label{derdotminv}
Suppose that $s_{m,m+1}^{\mu}=1$. Then the following identities hold in $U(\mathfrak{p})$ for $r>1$:
 \begin{enumerate}
   \item  \[E_{m;i,j}^{(r+1)}=(-1)^{\tp(m)}[D_{m;i,g}^{(2)},E_{m;g,j}^{(r)}]-\sum_{f=1}^{\mu_m}D_{m;i,f}^{(1)}E_{m;f,j}^{(r)}\,,\]
   \item  \[D_{m+1;i,j}^{(r+1)}=(-1)^{\tp(m)}[F_{m;i,g}^{(2)},E_{m;g,j}^{(r)}]-\sum_{t=1}^{r+1}D_{m+1;i,j}^{(r+1-t)}D_{m;g,g}^{\prime (r)}\,.\]
 \end{enumerate}
\end{lemma}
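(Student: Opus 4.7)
The plan is to transfer each identity from its ``dotted'' analog in $U(\dot{\mathfrak p})$ — which holds by the inductive hypothesis that Theorem \ref{main} is valid for the smaller pyramid $\dot\pi$ — to $U(\mathfrak p)$ via Lemma \ref{sbabyr1}.

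Under the inductive hypothesis, the elements $\dot D_{a;i,j}^{(r)}$, $\dot E_{a;i,j}^{(r)}$, $\dot F_{a;i,j}^{(r)}$ of $U(\dot{\mathfrak p})$ satisfy the parabolic relations of $Y_{1|n}^{\ell-1}(\dot\sigma)$. Specializing the $[D,E]$-relation in Definition \ref{parashift} with $a=b=m$, $(r,s)=(2,r)$, $h=j=g$ and rearranging yields the dotted analog of (1); specializing (\ref{ef=dd}) with $a=b=m$, $(r,s)=(r,2)$, $h=i$, $k=g$, followed by the standard super-commutator flip, yields the dotted analog of (2). So both identities already hold in $U(\dot{\mathfrak p})$ with every $D, E, F$ replaced by its dotted counterpart.

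To promote these to $U(\mathfrak p)$, I apply Lemma \ref{sbabyr1}. Because the row index is $a=m$ rather than $m+1$, the equalities $D_{m;i,g}^{(t)}=\dot D_{m;i,g}^{(t)}$ and $F_{m;i,g}^{(2)}=\dot F_{m;i,g}^{(2)}$ hold without correction, whereas each $E_{m;\bullet,\bullet}^{(\bullet)}$ and each $D_{m+1;\bullet,\bullet}^{(\bullet)}$ acquires a product correction $-\sum_{k=1}^\beta \dot X_{\bullet,k}^{(\bullet-1)}\,\tilde e_{N-\beta+k,N-\beta+j}$ and a commutator correction $[\dot X_{\bullet,h}^{(\bullet-1)},\,\tilde e_{N-2\beta+h,N-\beta+j}]$, with $X \in \{E,D\}$ as appropriate. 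Substituting these expansions into (1), the purely dotted parts on the two sides cancel by the dotted analog of (1); the residual equation relates the correction of $E_{m;i,j}^{(r+1)}$ to what is produced when $\dot D_{m;i,g}^{(2)}$ is bracketed with (and $\dot D_{m;i,f}^{(1)}$ multiplied by) the corrections of $E_{m;g,j}^{(r)}$ and $E_{m;f,j}^{(r)}$. This residual is verified by (a) using (\ref{etilrel}) to compute the commutators of $\dot D_{m;\bullet,\bullet}^{(\bullet)}$ with the two types of $\tilde e$'s, and (b) invoking the dotted $[D,E]$-relation a second time on the inner pairing $[\dot D_{m;i,g}^{(2)},\dot E_{m;\bullet,k}^{(r-1)}]$, after which everything telescopes. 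Identity (2) is handled in parallel via (\ref{ef=dd}): since $F_{m;i,g}^{(2)}=\dot F_{m;i,g}^{(2)}$, expanding $[F_{m;i,g}^{(2)},E_{m;g,j}^{(r)}]$ gives $[\dot F_{m;i,g}^{(2)},\dot E_{m;g,j}^{(r)}]$ (which matches the dotted telescoping sum $\sum_{t\geq 0}\dot D_{m+1;i,j}^{(r+1-t)}\dot D_{m;g,g}^{\prime(t)}$ by the dotted analog of (2)) plus an extra piece coming from $\dot F_{m;i,g}^{(2)}$ commuting with the correction of $E_{m;g,j}^{(r)}$; using (\ref{etilrel}) once more, that extra piece reproduces precisely the correction of $D_{m+1;i,j}^{(r+1)}+\sum_{t\geq 1}D_{m+1;i,j}^{(r+1-t)}D_{m;g,g}^{\prime(t)}$.

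The main obstacle is the careful bookkeeping of all of these corrections: each $E$ (and each $D_{m+1}$) on each side contributes two kinds of extra terms, and the commutators between dotted elements and the $\tilde e$'s in $\mathfrak{\dot{m}^c}$ produce further correction-like pieces that have to be collected. The guiding observations are that the relevant $\tilde e$'s lie in $\ker\chi$ by (\ref{chidef}) and that all cross-commutators with dotted generators are controlled by (\ref{etilrel}), so after a finite amount of direct calculation everything collapses to zero. This is the super analog, under the hypothesis (\ref{crucialhypo}), of the corresponding step in the classical argument of \cite{BK2}.
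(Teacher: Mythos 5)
Your proposal is correct and follows the same strategy as the paper: derive the ``dotted'' versions of (1) and (2) from the inductive hypothesis (that $\W_{\dot\pi}\cong Y_{1|n}^{\ell-1}(\dot\sigma)$), then transfer them to $U(\mathfrak p)$ via the inductive formulas of Lemma \ref{sbabyr1}, using (\ref{etilrel}) to control the interactions between the dotted generators and the correction terms $\tilde e_{N-\beta+k,N-\beta+j}$, $\tilde e_{N-2\beta+h,N-\beta+j}$. Your choice of specializations in the $[D,E]$- and $[E,F]$-relations and the super-commutator sign bookkeeping ($(-1)^{\tp(m)}$, originating from the $b=1$ vs.\ $b\neq 1$ split in Definition \ref{parashift} and the parity flip in $[F,E]=-(-1)^{|E||F|}[E,F]$) are exactly right. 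The one simplification the paper exploits that your outline does not quite spell out: for identity (2) the element $F_{m;i,g}^{(2)}=\dot F_{m;i,g}^{(2)}$ actually supercommutes with both kinds of correction $\tilde e$'s (since, by (\ref{tdef}), no monomial in $\dot F_{m;i,g}^{(2)}$ involves a matrix unit indexed by a box of column $\ell$ or of column $\ell-1$ in block $m+1$), so there the correction terms pass through the outer bracket untouched and one only needs to re-apply the dotted $[E,F]$-identity at level $r$, followed by Lemma \ref{sbabyr1} for $D_{m+1}$. For (1) the analogous commutator with $\dot D_{m;i,g}^{(2)}$ is not automatically zero, and one does need the more elaborate (\ref{etilrel})-based bookkeeping you describe; the paper simply states that (1) is proved ``by the same method'' without carrying it out.
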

\begin{proof}
There are two possibilities here: either $m=1$ and $\mu=(1\,|\,\beta)$, or $m>1$ and $\mu=(1\,|\,\mu_2,\ldots ,\mu_m,\beta)$.
Assume that $m>1$. We prove (2) in detail, where (1) can be proved using exactly the same method.

By the induction hypothesis and defining relation (\ref{ef=dd}), we have
\begin{equation}\label{1221}
[\dot{F}_{m;i,g}^{(2)}, \dot{E}_{m;g,j}^{(r)}]=-(\sum_{t=0}^{r+1}\dot{D}_{m+1;i,j}^{(r+1-t)}\dot{D}_{m;g,g}^{\prime(t)})
=-\dot{D}_{m+1;i,j}^{(r+1)}-\sum_{t=1}^{r+1}\dot{D}_{m+1;i,j}^{(r+1-t)}\dot{D}_{m;g,g}^{\prime(t)}.
\end{equation} 
By Lemma \ref{sbabyr1}, we have
\begin{equation}\label{1222}
E_{m;g,j}^{(r)}=\dot{E}_{m;g,j}^{(r)}-\sum_{k=1}^{\beta}\dot{E}_{m;g,k}^{(r-1)}\tilde{e}_{N-\beta+k,N-\beta+j}+[\dot{E}_{m;g,h}^{(r-1)},\tilde{e}_{N-2\beta+h,N-\beta+j}].
\end{equation}
Now bracket (\ref{1222}) with $F_{m;i,g}^{(2)}=\dot{F}_{m;i,g}^{(2)}$. 
Note that no supermonomial in the expansion of $\dot{F}_{m;i,g}^{(2)}$ 
contains any matrix unit of the forms $\tilde{e}_{?,N-\beta+h}$, $\tilde{e}_{N-\beta+h,?}$ or $\tilde{e}_{N-2\beta+h,?}$. 
As a result, $[\dot{F_{m;i,g}^{(2)}},\tilde{e}_{N-\beta+j,N-\beta+k}]=[\dot{F_{m;i,g}^{(2)}},\tilde{e}_{N-2\beta+h,N-\beta+j}]=0$.

Using (\ref{1221}) and (\ref{sbr11}) twice, we obtain
\begin{align*}
[F_{m;i,g}^{(2)}, E_{m;g,j}^{(r)}] & = [\dot{F}_{m;i,g}^{(2)}, \dot{E}_{m;g,j}^{(r)}]-\sum_{k=1}^{\beta}[\dot{F}_{m;i,g}^{(2)}, \dot{E}_{m;g,k}^{(r-1)}]\tilde{e}_{N-\beta+k,N-\beta+j}\\
& +\big[[F_{m;i,g}^{(2)},E_{m;g,h}^{(r-1)}],\tilde{e}_{N-2\beta+h,N-\beta+j}\big]\\
& = -\dot{D}_{m+1;i,j}^{(r+1)}-\sum_{t=1}^{r+1}\dot{D}_{m+1;i,j}^{(r+1-t)}\dot{D}_{m;g,g}^{\prime(t)}\\
& +\sum_{k=1}^{\beta} \dot{D}_{m+1;i,k}^{(r)}\tilde{e}_{N-\beta+k,N-\beta+j}
+\sum_{k=1}^\beta\sum_{t=1}^{r} \dot{D}_{m+1;i,k}^{(r-t)}\dot{D}_{m;g,g}^{\prime(t)}\tilde{e}_{N-\beta+k,N-\beta+j}\\
& -[\dot{D}_{m+1;i,h}^{(r)},\tilde{e}_{N-2\beta+h,N-\beta+j}]
-\sum_{t=1}^{r}[\dot{D}_{m+1;i,h}^{(r-t)}\dot{D}_{m;g,g}^{\prime(t)},\tilde{e}_{N-2\beta+h,N-\beta+j} ]\\
& =-\sum_{t=1}^{r}\dot{D}_{m+1;i,j}^{(r+1-t)}\dot{D}_{m;g,g}^{\prime(t)}
 +\sum_{t=1}^{r}\sum_{k=1}^{\beta}\dot{D}_{m+1;i,k}^{(r-t)}\dot{D}_{m;g,g}^{\prime(t)}\tilde{e}_{N-\beta+k,N-\beta+j}\\
& -\sum_{t=1}^{r}[\dot{D}_{m+1;i,h}^{(r-t)}\dot{D}_{m;g,g}^{\prime(t)},\tilde{e}_{N-2\beta+h,N-\beta+j} ]
-D_{m+1;i,j}^{(r+1)} - \dot{D}_{m+1;i,j}^{(0)}\dot{D}_{m;g,g}^{\prime(r+1)}\\
& = - D_{m+1;i,j}^{(r+1)} -\sum_{t=1}^{r+1} D_{m+1;i,j}^{(r+1-t)}\dot{D}_{m;g,g}^{\prime(t)}
\end{align*}
Thus $D_{m+1;i,j}^{(r+1)}=-[F_{m;i,g}^{(2)}, E_{m;g,j}^{(r)}]-\sum_{t=1}^{r+1} D_{m+1;i,j}^{(r+1-t)}\dot{D}_{m;g,g}^{\prime(t)}$ for $m>1$.\\

The case for $m=1$ is exactly the same, except that the $E_1$'s and $F_1$'s are odd elements.
\end{proof}

\begin{lemma}
Suppose $s^\mu_{m,m+1}=1$. Then 
\begin{enumerate}
  \item $D_{m+1;i,j}^{(r)}$ are $\mathfrak{m}$-invariant for all $r\geq 0$ and $1\leq i,j\leq \mu_{m+1}$. 
  \item $E_{m;i,j}^{(r)}$ are $\mathfrak{m}$-invariant for all $r>1$ and $1\leq i\leq \mu_{m}$, $1\leq j\leq \mu_{m+1}$.
  \end{enumerate}
\end{lemma}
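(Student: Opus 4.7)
The plan is to reduce $\mathfrak{m}$-invariance to $\dot{\mathfrak{m}}^c$-invariance and then proceed by simultaneous induction on $r$. Since $\mathfrak{m}=\dot{\mathfrak{m}}\oplus\dot{\mathfrak{m}}^c$ as a vector space and the twisted action $a\cdot y=\pr_\chi([a,y])$ is linear in $a$, an element of $U(\mathfrak{p})$ is $\mathfrak{m}$-invariant iff it is both $\dot{\mathfrak{m}}$-invariant and $\dot{\mathfrak{m}}^c$-invariant. Lemma \ref{demdot} already supplies the $\dot{\mathfrak{m}}$-invariance of every $D^{(r)}_{m+1;i,j}$ and $E^{(r)}_{m;i,j}$, so only the $\dot{\mathfrak{m}}^c$-piece is at issue.

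The base cases of the induction are $r=0,1,2$. The element $D^{(0)}_{m+1;i,j}=\delta_{i,j}$ is trivially invariant, while the $\dot{\mathfrak{m}}^c$-invariance of $D^{(1)}_{m+1;i,j}$, $D^{(2)}_{m+1;i,j}$ and $E^{(2)}_{m;i,j}$ is exactly what Lemma \ref{de12dotminv} delivers under the standing hypothesis $s^{\mu}_{m,m+1}=1$; combined with the $\dot{\mathfrak{m}}$-invariance from Lemma \ref{demdot}, this yields full $\mathfrak{m}$-invariance in degrees $\le 2$.

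For the inductive step, let $r\ge 3$ and assume both claims for all smaller degrees. I would apply Lemma \ref{derdotminv} with $r-1$ in place of $r$. For $E^{(r)}_{m;i,j}$ the recursion expresses it as a supercommutator and a product involving $D^{(2)}_{m;i,g}$, $D^{(1)}_{m;i,f}$ and $E^{(r-1)}_{m;g,j}$; the first two are $\mathfrak{m}$-invariant by Lemma \ref{demint1}(i) and the last by the inductive hypothesis. For $D^{(r)}_{m+1;i,j}$ the recursion involves $F^{(2)}_{m;i,g}$ (invariant by Lemma \ref{demint1}(iii)), $E^{(r-1)}_{m;g,j}$ (already handled at the same step), the lower-degree $D^{(r-t)}_{m+1;i,j}$ for $t\ge 1$ (inductive hypothesis), and $D^{\prime(t)}_{m;g,g}$ (Lemma \ref{demint1}(i)). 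Since $\W_\pi$ is a subalgebra of $U(\mathfrak{p})$, sums, products and supercommutators of $\mathfrak{m}$-invariants remain $\mathfrak{m}$-invariant, closing the induction.

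The main obstacle was in fact already absorbed into Lemma \ref{de12dotminv}: the direct verification of $\dot{\mathfrak{m}}^c$-invariance for the seeds $D^{(1)}_{m+1;i,j}$, $D^{(2)}_{m+1;i,j}$ and $E^{(2)}_{m;i,j}$ requires matching the explicit $\tilde e$-monomial expansions from \eqref{tdef} against the vanishing pattern \eqref{chidef} of $\chi$. Once these seeds are in hand, the present argument is a clean induction propagated by the identities of Lemma \ref{derdotminv}. A minor bookkeeping point is that the induction on $E$ and on $D$ must be interleaved so that $E^{(r-1)}_{m;g,j}$ is known before $D^{(r)}_{m+1;i,j}$ is treated; this is automatic because the recursion for $E$ only refers to strictly lower-degree $E$'s and to $D_m$'s (not $D_{m+1}$'s), so $E^{(r-1)}_{m;*}$ is already \(\mathfrak m\)-invariant when needed.
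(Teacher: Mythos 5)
Your proof is correct and follows exactly the route the paper sketches: Lemma \ref{demdot} reduces matters to $\dot{\mathfrak{m}}^c$-invariance, Lemma \ref{de12dotminv} furnishes the seeds $r\le 2$, and the recursions of Lemma \ref{derdotminv} close the induction using the $\mathfrak{m}$-invariance of the $D_m$-, $D'_m$- and $F_m^{(2)}$-factors supplied by Lemma \ref{demint1}. The one cosmetic refinement in your write-up is that you propagate \emph{full} $\mathfrak{m}$-invariance at each step via the subalgebra property of $\W_\pi$, rather than tracking bare $\dot{\mathfrak{m}}^c$-invariance (which on its own does not obviously pass through products and commutators); this is the cleaner way to organize what the paper compresses into ``by Lemma \ref{de12dotminv}, Lemma \ref{derdotminv} and induction on $r$.''
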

\begin{proof}
These elements are known to be $\dot{\mathfrak{m}}$-invariant by Lemma \ref{demdot}. Hence it suffices to show that they are $\mathfrak{\dot{m}^c}$-invariant. By Lemma \ref{de12dotminv}, Lemma \ref{derdotminv} and induction on $r$, the statement follows.
\end{proof}

\begin{lemma}\label{des2minv}
Suppose that $s_{m,m+1}^\mu>1$. Then the following elements are invariant under the twisted action of $\tilde{e}_{N-\beta+f,N-2\beta+g}$ for all $1\leq f,g\leq\beta$.
\begin{enumerate}
\item $D_{m+1;i,j}^{(r)}$ for all $r\geq 2$ and $1\leq i,j\leq \mu_{m+1}$.
\item $E_{m;i,j}^{(r)}$ for all $r>s_{m,m+1}^\mu$ and $1\leq i\leq \mu_m$, $1\leq j\leq \mu_{m+1}$.
\end{enumerate}
\end{lemma}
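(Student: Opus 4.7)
Write $x := \tilde e_{N-\beta+f,\,N-2\beta+g}$, which lies in $\mathfrak m\setminus\dot{\mathfrak m}$ since $N-\beta+f\notin\dot I$ and $\col(N-2\beta+g)=\ell-1<\ell=\col(N-\beta+f)$. The plan is to combine the inductive description of Lemma~\ref{sbabyr1} with an explicit bracket computation in the spirit of the proof of Lemma~\ref{de12dotminv}, and then project via $\pr_\chi$. I will proceed by induction on $r$. The key algebraic input is the commutation rule (\ref{etilrel}): since $N-\beta+f\notin\dot I$, the second $\delta$ in (\ref{etilrel}) vanishes whenever we bracket $x$ against any matrix unit $\tilde e_{p,q}$ that actually appears in a dotted element, and we obtain the simple substitution rule
\[
 [x,\tilde e_{p,q}]\;=\;\delta_{p,\,N-2\beta+g}\,\tilde e_{N-\beta+f,\,q}\qquad(p,q\in\dot I).
\]
For the two auxiliary matrix units $\tilde e_{N-\beta+k,N-\beta+j}$ and $\tilde e_{N-2\beta+h,N-\beta+j}$ appearing in the correction terms of (\ref{sbr11}) and (\ref{sbr12}), one similarly reads off explicit formulas from (\ref{etilrel}) involving the Kronecker factors $\delta_{f,j}$, $\delta_{h,g}$, and the $\rho$-constants.

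\textbf{Key steps.} Applying the super-derivation property of $[x,-]$ to the expansions from Lemma~\ref{sbabyr1}, the bracket $[x,D_{m+1;i,j}^{(r)}]$ decomposes into three parts: the bracket of $x$ with $\dot D_{m+1;i,j}^{(r)}$ (producing monomials obtained by substituting one occurrence of $N-2\beta+g$ in first-slot position by $N-\beta+f$); the bracket with the correction term $\sum_k \dot D_{m+1;i,k}^{(r-1)}\tilde e_{N-\beta+k,N-\beta+j}$; and the bracket with the single commutator $[\dot D_{m+1;i,h}^{(r-1)},\tilde e_{N-2\beta+h,N-\beta+j}]$, expanded by the Jacobi identity. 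Each resulting monomial contains a distinguished factor of the form $\tilde e_{N-\beta+f,\,q}$ with $\col(q)\le\ell-1$, hence in $\mathfrak m$; using $a\equiv\chi(a)\pmod{I_\chi}$ for $a\in\mathfrak m$ (together with the already-established $\dot{\mathfrak m}$-invariance of all dotted quantities, so that only this distinguished factor is relevant modulo $I_\chi$) and the character formula (\ref{chidef}), one sees that the only candidate surviving contributions come from $q=N-2\beta+f$. The final step is to match these surviving contributions across the three parts: they pair up and cancel, and the pairing is made possible precisely because Lemma~\ref{sbabyr1} offers the flexibility of an arbitrary choice of $1\le h\le\beta$ in its third term.

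\textbf{Main obstacle and role of the hypothesis.} The principal difficulty is the combinatorial bookkeeping required to verify the pairwise cancellation in the last step, keeping track of parities $\tpa$, signs from (\ref{etilrel}) and (\ref{chidef}), and the various $\rho_{\col(\cdot)}$ scalars. The hypothesis $s_{m,m+1}^\mu>1$ enters essentially here: it forces the box $N-2\beta+f$ to sit strictly inside the rows of block $m+1$ (rather than at the right edge of block $m$), which guarantees that the substitution $N-2\beta+g\leadsto N-\beta+f$ performed on a supermonomial of $\dot D_{m+1;i,j}^{(r)}$ cannot interact with $\chi$ through any $\tilde e$ whose target row lies in block $m$. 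This eliminates all cross-block terms that would otherwise survive modulo $I_\chi$ and leaves only the clean cancellation described above. The assertion for $E_{m;i,j}^{(r)}$ is proved by a completely parallel argument using (\ref{sbr12}) in place of (\ref{sbr11}), with the additional constraint $r>s_{m,m+1}^\mu$ ensuring that the dotted element $\dot E_{m;i,j}^{(r-1)}$ is already defined.
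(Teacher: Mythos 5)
Your approach diverges from the paper's and, as written, has a genuine gap. The paper does \emph{not} try to compute $\pr_\chi([x,D_{m+1;i,j}^{(r)}])$ directly from the single-dotted expansion of Lemma~\ref{sbabyr1}: the central device in the paper's proof of this lemma is to introduce the \emph{doubly} dotted pyramid $\ddot\pi$ (delete the two rightmost columns of $\pi$) and to apply Lemma~\ref{sbabyr1} twice, first expressing the $D$'s of $\pi$ via the $\dot D$'s of $\dot\pi$ and then the $\dot D$'s via the $\ddot D$'s of $\ddot\pi$. This produces an explicit eight-term formula $D_{m+1;i,j}^{(r)}=A-B+C-D+E-F-G+H$, and the whole computation becomes tractable because the element $x=\tilde e_{N-\beta+f,N-2\beta+g}$ supercommutes with \emph{all} of $U(\ddot{\mathfrak p})$ (both of its indices lie outside $\ddot I$). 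One then applies $\ad x$ to each of the eight terms, projects, and observes a clean pairwise cancellation. Your proposal stops at one level of dotting, where $x$ does \emph{not} supercommute with $U(\dot{\mathfrak p})$, so the bracket $[x,\dot D_{m+1;i,j}^{(r)}]$ is a nontrivial sum of substituted monomials; the claim that ``modulo $I_\chi$ only the distinguished factor is relevant'' and that the surviving pieces ``pair up and cancel'' is exactly the hard part, and it is not carried out. In particular, $\pr_\chi$ of a product with an $\mathfrak m$-factor embedded in the middle is not computed by replacing that factor in place by its $\chi$-value (one must move it through, generating new commutator terms), and $\dot{\mathfrak m}$-invariance of the dotted quantities does not by itself control these.

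Your explanation of where the hypothesis $s_{m,m+1}^\mu>1$ enters is also off target. The boxes $N-2\beta+1,\dots,N-\beta$ are the bottom $\beta$ boxes of column $\ell-1$ and so lie in block-$(m{+}1)$ rows regardless of the value of $s_{m,m+1}^\mu$; the hypothesis does not ``move'' them. Its actual role is structural: $s_{m,m+1}^\mu>1$ is exactly what forces column $\ell-1$ of $\pi$ to have absolute height equal to $\beta$ (otherwise block-$m$ rows would intrude into column $\ell-1$). This is what lets one apply Lemma~\ref{sbabyr1} a second time, to $\dot\pi$, with the \emph{same} $\beta$ and with the indices $N-2\beta+k$ and $N-3\beta+h$ as in (\ref{des2minv2}); it is also why this case must be separated from the $s_{m,m+1}^\mu=1$ case, which the paper handles by a completely different mechanism (Lemmas~\ref{de12dotminv}--\ref{derdotminv}). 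To repair your argument you would essentially need to rediscover the double reduction to $\ddot\pi$, or else carry out in full the messy $\pr_\chi$ bookkeeping that the double reduction is designed to avoid.
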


\begin{proof}
(1) Let $\ddot{\pi}$ be the pyramid obtained by deleting the rightmost two columns of $\pi$. 
Define $\ddot{\mathfrak{p}}$, $\ddot{\mathfrak{m}}$ and $\ddot{e}\in\mathfrak{gl}_{M|N-2\beta}$ as before, and embed $U(\ddot{\mathfrak{g}})$ into $U(\dot{\mathfrak{g}})$ as we embed $U(\dot{\mathfrak{g}})$ into $U(\mathfrak{g})$. By the induction hypothesis, the elements $\ddot{D}_{m+1;i,j}^{(r)}$ in $\W_{\ddot\pi}$ are $\ddot{\mathfrak{m}}$-invariant.

Assuming $r\geq 2$ and applying Lemma \ref{sbr11} to $\pi$, we have
\begin{equation}\label{des2minv1}
D_{m+1;i,j}^{(r)} = \dot D_{m+1;i,j}^{(r)}
-\sum_{k=1}^\beta
\dot D_{m+1;i,k}^{(r-1)} \tilde e_{N-\beta+k,N-\beta+j}
+
[\dot D_{m+1;i,h}^{(r-1)}, \tilde e_{N-2\beta+h,N-\beta+j}]
\end{equation}
 
Applying Lemma \ref{sbr11} to $\dot{\pi}$, we obtain
\begin{equation}\label{des2minv2}
\dot{D}_{m+1;i,j}^{(r)} = \ddot{D}_{m+1;i,j}^{(r)}
-\sum_{k=1}^\beta
\ddot{D}_{m+1;i,k}^{(r-1)} \tilde e_{N-2\beta+k,N-2\beta+j}
+
[\ddot{D}_{m+1;i,h}^{(r-1)}, \tilde e_{N-3\beta+h,N-2\beta+j}]
\end{equation}

Substituting (\ref{des2minv2}) into (\ref{des2minv1}) and simplifying by (\ref{etilrel}), one deduces that for all $r\geq 2$,
$D_{m+1;i,j}^{(r)}=A-B+C-D+E-F-G+H$, where
\begin{align*}
A&=\ddot{D}_{m+1;i,j}^{(r)}, & 
B&=\sum_{k=1}^{\beta}\ddot{D}_{m+1;i,k}^{(r-1)}\tilde{e}_{N-2\beta+k,N-2\beta+j},\\
C&=[\ddot{D}_{m+1;i,h}^{(r-1)},\tilde{e}_{N-3\beta+h,N-2\beta+j}], & 
D&=\sum_{k=1}^\beta\ddot{D}_{m+1;i,k}^{(r-1)}\tilde{e}_{N-\beta+k,N-\beta+j}\\
E&=\sum_{k,s=1}^\beta \ddot{D}_{m+1;i,s}^{(r-2)}\tilde{e}_{N-2\beta+s,N-2\beta+k}\tilde{e}_{N-\beta+k,N-\beta+j}, &
F&=\sum_{k=1}^\beta \ddot{D}_{m+1;i,k}^{(r-2)}\tilde{e}_{N-2\beta+k,N-\beta+j},\\
G&=\sum_{k=1}^\beta[\ddot{D}_{m+1;i,h}^{(r-2)},\tilde{e}_{N-3\beta+h,N-2\beta+k}]\tilde{e}_{N-\beta+k,N-\beta+j},&
H&=[\ddot{D}_{m+1;i,g}^{(r-2)},\tilde{e}_{N-3\beta+g,N-\beta+j}].
\end{align*}

Let $x=\tilde{e}_{N-\beta+f,N-2\beta+g}$ for some $1\leq f,g\leq \beta$. Note that $x$ commutes with all elements in $U(\ddot{\mathfrak{p}})$. Applying ad $x$ to the above elements and using (\ref{rhodef}), (\ref{etilrel}) and (\ref{chidef}),
we obtain their images under $\text{pr}_{\chi}$ as follows:
\begin{align*}
\text{pr}_{\chi}([x,A])&=0,\qquad \text{pr}_{\chi}([x,B])=\del_{fj}\ddot{D}_{m+1;i,g}^{(r-1)},\\
\text{pr}_{\chi}([x,C])&=0,\qquad \text{pr}_{\chi}([x,D])=-\del_{fj}\ddot{D}_{m+1;i,g}^{(r-1)},\\
\text{pr}_{\chi}([x,E])&=-\beta\del_{fj}\ddot{D}_{m+1;i,g}^{(r-2)}+\ddot{D}_{m+1;i,g}^{(r-2)}\tilde{e}_{N-\beta+f,N-\beta+j}\\
&\qquad\qquad\qquad\qquad\qquad
-\del_{fj}\sum_{k=1}^{\beta}\ddot{D}_{m+1;i,k}^{(r-2)}\tilde{e}_{N-2\beta+k,N-2\beta+g},\\
\text{pr}_{\chi}([x,F])&=-\beta\del_{fj}\ddot{D}_{m+1;i,g}^{(r-2)}+\ddot{D}_{m+1;i,g}^{(r-2)}\tilde{e}_{N-\beta+f,N-\beta+j}\\
&\qquad\qquad\qquad\qquad\qquad
-\del_{fj}\sum_{k=1}^{\beta}\ddot{D}_{m+1;i,k}^{(r-2)}\tilde{e}_{N-2\beta+k,N-2\beta+g},\\
\text{pr}_{\chi}([x,G])&=-\del_{fj}[\ddot{D}_{m+1;i,h}^{(r-2)},\tilde{e_{N-3\beta+h,N-2\beta+g}}],\\
\text{pr}_{\chi}([x,H])&=-\del_{fj}[\ddot{D}_{m+1;i,h}^{(r-2)},\tilde{e}_{N-3\beta+h,N-2\beta+g}].
\end{align*}
As a result, $\text{pr}_{\chi}([x,D_{m+1;i,j}^{(r)}])=0$.\\
(2) can be proved by a similar method.
\end{proof}

\begin{proposition}\label{minvprop}
The elements
\begin{align*}
&\{D_{a;i,j}^{(r)}\}_{1 \leq a \leq m+1,1 \leq i,j \leq \mu_a, r > 0},\\
&\{E_{a;i,j}^{(r)}\}_{1 \leq a < m+1, 1 \leq i \leq \mu_a, 1 \leq j \leq \mu_{a+1}, r > s_{a,b}^\mu },\\
&\{F_{a;i,j}^{(r)}\}_{1 \leq a < m+1, 1 \leq i \leq \mu_{a+1}, 1 \leq j \leq \mu_{a}, r > s_{b,a}^\mu}
\end{align*}
of $U({\mathfrak{p}})$ are $\mathfrak{m}$-invariant.
\end{proposition}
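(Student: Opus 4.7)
The plan is to assemble the previous lemmas of this section and verify $\mathfrak{m}$-invariance case by case. First I will observe that Lemma \ref{demint1} already handles $\mathfrak{m}$-invariance for $D_{a;i,j}^{(r)}$ with $a \le m$, for $E_{a;i,j}^{(r)}$ with $a < m$, and for every $F_{a;i,j}^{(r)}$ in the listed range. This reduces the problem to the elements $D_{m+1;i,j}^{(r)}$ with $r \ge 1$ and $E_{m;i,j}^{(r)}$ with $r > s_{m,m+1}^\mu$. All of these are already $\dot{\mathfrak{m}}$-invariant by Lemma \ref{demdot}, so the task is to promote $\dot{\mathfrak{m}}$-invariance to $\mathfrak{m}$-invariance by checking the twisted action of the complement $\dot{\mathfrak{m}}^c = \{\tilde{e}_{f,g} : N-\beta < f \le N,\ g \le N-\beta\}$.

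I will then split on the value of $s := s_{m,m+1}^\mu$. In the case $s = 1$, the unlabeled lemma immediately preceding Lemma \ref{des2minv} (proved inductively on $r$ via Lemma \ref{de12dotminv} and the recursions in Lemma \ref{derdotminv}) directly supplies the $\mathfrak{m}$-invariance of $D_{m+1;i,j}^{(r)}$ for $r \ge 0$ and of $E_{m;i,j}^{(r)}$ for $r \ge 2$, so the subcase is complete.

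In the case $s > 1$, the low-degree piece $D_{m+1;i,j}^{(1)}$ is $\dot{\mathfrak{m}}^c$-invariant by Lemma \ref{de12dotminv}(1), hence $\mathfrak{m}$-invariant. For the higher-degree elements $D_{m+1;i,j}^{(r)}$ with $r \ge 2$ and $E_{m;i,j}^{(r)}$ with $r > s$, Lemma \ref{des2minv} only provides invariance under the restricted subfamily $X := \{\tilde{e}_{N-\beta+f,\,N-2\beta+g}\}_{1 \le f,g \le \beta} \subset \dot{\mathfrak{m}}^c$. The key general principle I will invoke is that the twisted action of $\mathfrak{m}$ on $U(\mathfrak{p}) \cong U(\mathfrak{g})/I_\chi$ is a bona fide Lie superalgebra action (since it is induced from left multiplication on the quotient), so the stabilizer in $\mathfrak{m}$ of a fixed element of $U(\mathfrak{p})$ is a Lie subsuperalgebra. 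It therefore suffices to show that $\dot{\mathfrak{m}}$ together with $X$ Lie-generates $\mathfrak{m}$, which follows at once from (\ref{etilrel}): given any $\tilde{e}_{N-\beta+f,\,q}$ with $\col(q) < \ell-1$, one computes $[\tilde{e}_{N-\beta+f,\,N-2\beta+g},\tilde{e}_{N-2\beta+g,\,q}] = \tilde{e}_{N-\beta+f,\,q}$ with the second factor lying in $\dot{\mathfrak{m}}$, and iterating produces every element of $\dot{\mathfrak{m}}^c$.

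Finally, Case L (where $|q_1|=\beta<|q_\ell|$) is reduced to Case R by applying the anti-automorphism $\tau$ of (\ref{taudef}), which swaps the $E$'s and $F$'s and reverses the role of the leftmost and rightmost columns; the analogues of Lemma \ref{sbabyr1}, Lemma \ref{demdot}, Lemma \ref{de12dotminv}, Lemma \ref{derdotminv}, and Lemma \ref{des2minv} all have symmetric versions obtained in this way. The main obstacle in this section is not the proposition itself but the preceding lemmas (in particular Lemma \ref{des2minv} and the recursive identities of Lemma \ref{derdotminv}), where the concrete twisted-action computations using (\ref{chidef}) and (\ref{etilrel}) were carried out; the present proof is essentially bookkeeping plus one clean Lie-generation observation.
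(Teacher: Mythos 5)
Your proof is correct and follows the same route as the paper, which simply cites Lemmas \ref{demint1}--\ref{des2minv} and the induction hypothesis in a one-line proof. What you add is the explicit bridge that the paper leaves implicit: in the case $s_{m,m+1}^{\mu}>1$, Lemma \ref{des2minv} only gives invariance under the subfamily $X=\{\tilde e_{N-\beta+f,N-2\beta+g}\}$, not under all of $\dot{\mathfrak m}^{c}$, so one genuinely needs the observation that the stabilizer is a Lie subsuperalgebra of $\mathfrak m$ and that $\dot{\mathfrak m}\cup X$ Lie-generates $\mathfrak m$. That observation is correct: $I_\chi$ is $\operatorname{ad}\mathfrak m$-stable because the good grading is even so $\chi$ vanishes on $[\mathfrak m,\mathfrak m]$, hence the twisted action is the descent of the adjoint action (a Lie superalgebra action) to $U(\mathfrak g)/I_\chi\cong U(\mathfrak p)$, and stabilizers of points under a Lie action are Lie subalgebras. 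Two small points of precision worth tightening. First, the parenthetical ``induced from left multiplication on the quotient'' is slightly off; the twisted action $a\cdot y=\pr_\chi([a,y])$ is the adjoint action descended to $U(\mathfrak g)/I_\chi$, which differs from left multiplication by the scalar $\pm\chi(a)$ (still a Lie action since $\chi$ is a character of $\mathfrak m$, but the right statement is the adjoint one). Second, the bracket computation $[\tilde e_{N-\beta+f,N-2\beta+g},\tilde e_{N-2\beta+g,q}]=\tilde e_{N-\beta+f,q}$ for $\col(q)<\ell-1$ produces every element of $\dot{\mathfrak m}^{c}$ in a single step, not by iteration, \emph{provided} one notes that when $s_{m,m+1}^{\mu}>1$ the $(\ell-1)$-st column of $\pi$ consists of exactly the $\beta$ boxes $N-2\beta+1,\dots,N-\beta$ (since the $m$-th block rows stop short of column $\ell-1$). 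That column-height fact is precisely why $X$ already covers all of $\dot{\mathfrak m}^{c}$ with second index in column $\ell-1$; without it, elements $\tilde e_{N-\beta+f,q}$ with $\col(q)=\ell-1$ and $q$ outside the bottom $\beta$ rows would be unreachable by your bracketing, since no element of $\dot{\mathfrak m}$ has first index in column $\ell$. It would be worth stating this column-height observation explicitly, because the very same dichotomy ($s_{m,m+1}^{\mu}=1$ iff column $\ell-1$ has more than $\beta$ boxes) is what drives the split between the $s=1$ lemmas and Lemma \ref{des2minv} in the first place.
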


\begin{proof}
By the induction hypothesis and Lemma \ref{demint1}--Lemma \ref{des2minv}.
\end{proof}

Proposition~\ref{minvprop} implies that the elements in the description of Theorem~\ref{main} are indeed elements of $\W_\pi$. By the induction hypothesis, we may identify $Y_{1|n}^{\ell-1}(\dot\sigma)$ with $\W_{\dot\pi}\subseteq U(\dot{\mathfrak{p}})$ and hence the generators $\dot D_{a:i,j}^{(r)}$, $\dot E_{a;i,j}^{(r)}$ and $\dot F_{a;i,j}^{(r)}$ in $Y_{1|n}^{\ell-1}(\dot\sigma)$ coincide with the elements of $\W_{\dot \pi}$ with the same name.  Recall the injective superalgebra homomorphism $\Delta_R:Y_{1|n}^\ell(\sigma)\rightarrow U(\dot{\mathfrak{p}})\otimes U(\gl_\beta)$ in Theorem \ref{PBWSYLpara}.

By Corollary \ref{dimcoro}, for each $d\geq 0$, we have
\begin{equation}\label{dim1}
\dim \Delta_R (F_dY_{1|n}^\ell(\sigma))=\dim F_d Y_{1|n}^\ell(\sigma)=\dim F_d S(\g^e),
\end{equation}
where $F_dS(\g^e)$ is the sum of all graded elements in $S(\g^e)$ of degree $\leq d$ in the Kazhdan grading.

Define the general parabolic generators $E_{a,b;i,j}^{(r)}$ and $F_{ab,;i,j}^{(r)}$ in $F_r U(\mathfrak{p})$ by formulae (\ref{eparag}) and (\ref{fparag}) recursively, choosing an arbitrary integer $k$ there.
Let $X_d$ denote the subspace of $U(\mathfrak{p})$ spanned by all supermonomials in the elements
\begin{align*}
&\{D_{a;i,j}^{(r)}\}_{1\leq a\leq m+1, 1\leq i,j\leq \mu_a, 0\leq r\leq s_{a,a}^{\mu}},\\
&\{E_{a,b;i,j}^{(r)}\}_{1\leq a<b\leq m+1, 1\leq i\leq \mu_a, 1\leq j\leq \mu_b, s_{a,b}^\mu<r\leq s_{a,b}^{\mu}+p_a^\mu},\\
&\{F_{a,b;i,j}^{(r)}\}_{1\leq b<a\leq m+1, 1\leq i\leq \mu_a, 1\leq j\leq \mu_b, s_{a,b}^\mu<r\leq s_{a,b}^\mu+p_a^\mu} .
\end{align*}
taken in some fixed order and of total degree $\leq d$. By Proposition \ref{minvprop}, $X_d$ is a subspace of $F_d \W_\pi$.

Define a superalgebra homomorphism $\psi_R:U(\mathfrak{p})\rightarrow U(\dot{\mathfrak{p}})\otimes U(\gl_\beta)$ by
\begin{equation}\notag
\psi_R(\tilde e_{i,j}):= \left\{
\begin{array}{ll}
\tilde e_{i,j}\otimes 1 &\hbox{if $\col(i)\leq \col (j)\leq \ell-1$,}\\
0 &\hbox{if $\col(i)\leq \ell-1, \col(j)=\ell$,}\\
1\otimes \tilde e_{i-N+\beta,j-N+\beta} &\hbox{if $\col(i)=\col(j)=\ell$.}
\end{array}
\right.
\end{equation}
By Lemma \ref{sbabyr1}, we have 
\begin{align*}
&\psi_R(D_{a;i,j}^{(r)})=\dot D_{a;i,j}^{(r)}\otimes 1-\delta_{a,m+1}\sum_{k=1}^\beta \dot D_{a;i,k}^{(r-1)}\otimes \tilde e_{k,j},\\
&\psi_R(E_{a;i,j}^{(r)})=\dot E_{a;i,j}^{(r)}\otimes 1-\delta_{a,m}\sum_{k=1}^\beta \dot E_{a;i,k}^{(r-1)}\otimes \tilde e_{k,j},\\
&\psi_R(F_{a;i,j}^{(r)})=\dot F_{a;i,j}^{(r)}\otimes 1.
\end{align*}

Comparing this with Theorem \ref{baby1}$({\it 1})$ and recalling the PBW basis for $Y_{1|n}^\ell(\sigma)$ obtained from Corollary \ref{pbwbasis}, we deduce that $\psi_R(X_d)=\Delta_R(F_dY_{1|n}^\ell(\sigma))$. Combining this with (\ref{dim1}) and Corollary \ref{dimcoro}, we obtain 
\begin{equation*}
\dim F_dS(\g^e)=\dim \psi_R(X_d)\leq \dim X_d \leq \dim F_d\W_{\pi}\leq \dim F_dS(\g^e). 
\end{equation*}

Thus equality holds everywhere so we have $X_d=F_d\W_\pi$ for each $d\geq 0$, and in particular, the map $\psi_R:\W_\pi\rightarrow U(\dot{\mathfrak{p}})\otimes \gl_\beta$ is an injective homomorphism. Moreover, recall the map $\Delta_R:Y_{1|n}^\ell(\sigma)\rightarrow U(\dot{\mathfrak{p}})\otimes \gl_\beta$ defined in Theorem \ref{baby1}$({\it 1})$.
Comparing the formulae, we have that $\psi_R(D_{a;i,j}^{(r)})=\Delta_R(D_{a;i,j}^{(r)})$, where the elements $D_{a;i,j}^{(r)}$ on the left-hand side are the elements of $\W_\pi$, and the elements $D_{a;i,j}^{(r)}$ on the right-hand side are the generators of $Y_{1|n}^\ell(\sigma)$. Similarly, $\psi_R(E_{a;i,j}^{(r)})=\Delta_R(E_{a;i,j}^{(r)})$ and $\psi_R(F_{a;i,j}^{(r)})=\Delta_R(F_{a;i,j}^{(r)})$ for all admissible $a,i,j,r$.

Therefore, the composition map $\psi_R^{-1}\circ \Delta_R:Y_{1|n}^\ell(\sigma)\rightarrow \W_\pi$ is exactly the filtered superalgebra isomorphism described in Theorem \ref{main} and the elements listed in Theorem \ref{main} indeed generate $\W_\pi$. This proves Theorem \ref{main} in the case R.

Next we sketch how to complete the induction step in the case L. In this case, we enumerate the bricks of $\pi$ down columns {\em from right to left}. Note that different ways of enumerating are just choosing different bases to describe $\gl_{M|N}\cong \End (\mathbb{C}^{M|N})$ so we may choose the way most suitable for our current purpose.

Let $\dot \pi$ denote the pyramid obtained from $\pi$ by deleting the {\em left-most} column of $\pi$; that is, deleting the bricks numbered with $N,N-1,\ldots,N-\beta+1$. Let $\dot\sigma$ be the shift matrix obtained from (\ref{babyl1}), where the corresponding pyramid is exactly $\dot\pi$, and define $\dot{\mathfrak{p}}, \dot{\mathfrak{m}}, \dot e \in \dot \g :=\mathfrak{gl}_{M|N-\beta}$ via (\ref{mpdef}) and (\ref{edef}) with respect to $\dot \pi$. Note that in this case we embed $U(\dot{\g})$ into $U(\g)$ by the natural embedding, since it already sends the elements $\tilde{e}_{ij}$ of $U(\dot{\g})$ to the elements $\tilde{e}_{ij}$ of $U(\g)$ for all $1\leq i,j\leq N-\beta$.

Under this embedding, the superalgebra $\W_{\dot{\pi}}=U(\dot{\mathfrak{p}})^{\dot{\mathfrak{m}}}$ is a subalgebra of $U(\dot{\mathfrak{p}})\subset U(\mathfrak{p})$ and the twisted action of $\dot{\mathfrak{m}}$ on $U(\dot{\mathfrak{p}})$ is exactly the restriction of the twisted action of $\mathfrak{m}$ on $U(\mathfrak{p})$. Let $\dot D_{a;i,j}^{(r)}, \dot{D}_{a;i,j}^{\prime(r)}$, $\dot E_{a;i,j}^{(r)}$ and $\dot F_{a;i,j}^{(r)}$ denote the elements of $U(\dot{\mathfrak{p}})$ as defined in \textsection \ref{Invariants} associated to the shape $\mu$ which is also admissible for $\dot\sigma$. By the induction hypothesis, these elements are $\dot{\mathfrak{m}}$-invariant.

The idea is exactly the same. By the following crucial lemma, which is the analogue of Lemma \ref{sbabyr1}, we may express the elements $D_{a;i,j}^{(r)}, {D}_{a;i,j}^{\prime(r)}$, $E_{a;i,j}^{(r)}$ and $F_{a;i,j}^{(r)}$ in $U(\mathfrak{p})$ in terms of $\dot D_{a;i,j}^{(r)}, \dot{D}_{a;i,j}^{\prime(r)}$, $\dot E_{a;i,j}^{(r)}$ and $\dot F_{a;i,j}^{(r)}$. Then by case-by-case discussions and computations, we can prove that all of the $D_{a;i,j}^{(r)}, {D}_{a;i,j}^{\prime(r)}$, $E_{a;i,j}^{(r)}$ and $F_{a;i,j}^{(r)}$ are indeed $\mathfrak{m}$-invariant. Since the arguments are almost identical, we will only provide the most crucial lemma below, where the proof is exactly the same.
\begin{lemma}\label{sbabyl1}
The following equations hold for $r > 0$, all admissible $a,i,j$ and any fixed $1 \leq h \leq \beta$:
\begin{align}\label{sbl1}\notag
D_{a;i,j}^{(r)} & = \dot D_{a;i,j}^{(r)}\\
 &+\delta_{a,m+1}
\left(-\sum_{k=1}^\beta \tilde e_{N-\beta+i,N-\beta+k}
\dot D_{a;k,j}^{(r-1)} 
+
[ \tilde e_{N-\beta+i,N-2\beta+h}, \dot D_{a;h,j}^{(r-1)}]\right),\\
E_{a;i,j}^{(r)} & = \dot E_{a;i,j}^{(r)},\\
F_{a;i,j}^{(r)} & = \dot F_{a;i,j}^{(r)} + \delta_{a,m}\left(
-\sum_{k=1}^\beta \tilde e_{N-\beta+i,N-\beta+k}\dot F_{a;k,j}^{(r-1)} 
+ [\tilde e_{N-\beta+i,N-2\beta+h}, \dot F_{a;h,j}^{(r-1)} ]\right),\label{sbl2}
\end{align}
where for (\ref{sbl2}) we are assuming that $r > s_{m+1,m}^\mu$ if $a=m$.
\end{lemma}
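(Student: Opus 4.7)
My plan is to follow the proof of Lemma \ref{sbabyr1} verbatim, the only substantive change being that the deleted column is now the leftmost of $\pi$ rather than the rightmost. By Theorem \ref{ttodefthm}, each of $D_{a;i,j}^{(r)}$, $E_{a;i,j}^{(r)}$, $F_{a;i,j}^{(r)}$ equals a specific coefficient of a series $T_{i',j';x}(u)$ from (\ref{tseries}), and the analogous identity holds with dotted generators computed from $\dot\pi$, since $\mu$ is admissible for $\dot\sigma$ as well. I will substitute the explicit sum (\ref{tdef}) on both sides and match monomials.

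Under the case-L right-to-left enumeration, the bricks of the deleted leftmost column carry indices $\{N-\beta+1,\ldots,N\}$. I partition every summand $\tilde e_{i_1,j_1}\cdots\tilde e_{i_s,j_s}$ of $T^{(r)}_{i',j';x}$ into those with all indices in $\dot I=\{\pa 1,\ldots,\pa M, 1,\ldots,N-\beta\}$ (which reproduce the corresponding $\dot T^{(r)}$ summand, hence the dotted generator) and those containing at least one index in the leftmost column. The column-ordering conditions (2)--(4) in (\ref{tdef}) force such indices to appear only as $i_1$, because once $\col(i_t)=1$ the stepping rule prevents any later $i_{t'}$ or $j_{t'}$ from being in column $1$. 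Since the first row-label of the $T$-series equals $\row(i_1)$, only the $D$-series with $a=m+1$ and the $F$-series with $a=m$ can have $i_1$ in the leftmost column; the $E$-series has $x \ge \mu_1+\cdots+\mu_a$ and its starting row-label does not reach the leftmost column, giving immediately $E_{a;i,j}^{(r)}=\dot E_{a;i,j}^{(r)}$.

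For the $D$- and $F$-corrections I will split further according to whether $\col(j_1)=1$ or $\col(j_1)\ge 2$. In the first case, factoring out the leading $\tilde e_{N-\beta+i,N-\beta+k}$ leaves a product whose combinatorial description matches $\dot D_{a;k,j}^{(r-1)}$ (respectively $\dot F_{a;k,j}^{(r-1)}$), summed over $k=1,\ldots,\beta$, yielding the displayed sum term. In the second case, the leading factor $\tilde e_{N-\beta+i,N-2\beta+h}$ supercommutes with every factor of $\dot D_{a;h,j}^{(r-1)}$ apart from a single contribution isolated by (\ref{etilrel}), so the sum collapses to the single commutator $[\tilde e_{N-\beta+i,N-2\beta+h},\dot D_{a;h,j}^{(r-1)}]$, independent of the choice of $h$; the same telescoping gives the $F$-analogue. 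The main technical step is tracking the signs $\sigma_{\row(j_t)}$ and $(-1)^{\tpa(i_k)}$ through this splitting, but because (\ref{crucialhypo}) confines all $+$-signs to the top row—which meets every column—these prefactors behave exactly as in Lemma \ref{sbabyr1} after a left-right reflection, and no new obstacle arises.
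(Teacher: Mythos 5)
Your proof is correct and coincides with the paper's approach: the paper proves Lemma~\ref{sbabyl1} by appealing to the mirror argument for Lemma~\ref{sbabyr1}, which is exactly the strategy you carry out, namely expanding both sides via Theorem~\ref{ttodefthm} and the explicit formula~(\ref{tdef}) and matching summands according to whether and where the deleted-column indices occur. (One small inaccuracy that does not affect the argument: in case~L the top $+$-row does \emph{not} reach column~$1$, so it does not "meet every column"; the sign bookkeeping nevertheless works because all indices appearing in the correction factors $\tilde e_{N-\beta+i,N-\beta+k}$ and $\tilde e_{N-\beta+i,N-2\beta+h}$ lie in $-$-rows, so the relevant $\tpa$-values and $\sigma$-signs transfer under the bijection with $\dot D$- and $\dot F$-summands exactly as you describe.)
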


With the help of Lemma \ref{sbabyl1}, one can deduce that the statement of Proposition~\ref{minvprop} still holds in the case L. Finally, define a superalgebra homomorphism $\psi_L:U(\mathfrak{p})\rightarrow  U(\gl_\beta)\otimes U(\dot{\mathfrak{p}})$ by
\begin{equation}\notag
\psi_L(\tilde e_{i,j}):= \left\{
\begin{array}{ll}
\tilde e_{i-N+\beta,j-N+\beta}\otimes 1 &\hbox{if $\col(i)=\col (j)=1$,}\\
0 &\hbox{if $\col(i)=1, \col(j)\geq 2$,}\\
1\otimes \tilde e_{i,j} &\hbox{if $2\leq \col(i)\leq \col(j)$.}
\end{array}
\right.
\end{equation}
By Lemma \ref{sbabyl1}, we have that
\begin{align*}
&\psi_L(D_{a;i,j}^{(r)})=1\otimes \dot D_{a;i,j}^{(r)}-\delta_{a,m+1}\sum_{k=1}^\beta \tilde e_{i,k} \otimes \dot D_{a;k,j}^{(r-1)}\\
&\psi_L(E_{a;i,j}^{(r)})=1\otimes \dot E_{a;i,j}^{(r)},\\
&\psi_L(F_{a,ji,j}^{(r)})=1\otimes \dot F_{a;i,j}^{(r)}-\delta_{a,m}\sum_{k=1}^\beta \tilde e_{i,k} \otimes\dot F_{a;k,j}^{(r-1)}.
\end{align*}
Exactly the same argument as in the case R shows that the map $\psi_L$ is injective and the composition map $\psi_L^{-1}\circ \Delta_L:Y_{1|n}^\ell(\sigma)\rightarrow \W_\pi$ gives the required isomorphism of filtered superalgebras. This completes the proof of Theorem \ref{main}.

\begin{corollary}\label{ezg}
Let $\pi$ be a signed pyramid satisfying (\ref{crucialhypo}) and $\vec\pi$ be another signed pyramid obtained by horizontally shifting rows of $\pi$. Let $\W_\pi$ and $\W_{\vec\pi}$ denote the associated finite $W$-superalgebras, respectively. Then there exists a superalgebra isomorphism $\iota:\W_\pi\rightarrow \W_{\vec\pi}$ defined on parabolic generators with respect to an admissible shape $\mu$ by (\ref{iotadef}).
\end{corollary}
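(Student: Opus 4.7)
The plan is to realize $\iota$ as the composition
\[
\W_\pi \xleftarrow{\sim} Y_{1|n}^\ell(\sigma) \xrightarrow{\sim} Y_{1|n}^\ell(\vec\sigma) \xrightarrow{\sim} \W_{\vec\pi},
\]
in which the outer isomorphisms are supplied by Theorem \ref{main} applied to $\pi$ and to $\vec\pi$, and the middle one is the map (\ref{iotaiso}) between truncated shifted super Yangians.

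The first thing I would check is that the hypotheses needed to invoke (\ref{iotaiso}) are met: namely, that $\pi$ and $\vec\pi$ produce the same level $\ell$, and that $s_{i,i+1} + s_{i+1,i} = \vec s_{i,i+1} + \vec s_{i+1,i}$ for every $1 \leq i \leq n$. Horizontally shifting a row preserves both its own length and the length of the base, so the two pyramids share the same $\ell$ and the same row lengths $p_i = \vec p_i$. The pyramid property (each row is a connected horizontal strip, so the shorter row's horizontal extent is contained in the longer row's extent) combined with the dictionary in \textsection 7.3 forces the stronger identity
\[
s_{i,j} + s_{j,i} = p_j - p_i \qquad (1 \leq i \leq j \leq n+1),
\]
which in particular yields the required consecutive-index equality. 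Also, $\vec\pi$ still satisfies (\ref{crucialhypo}) because horizontal shifts do not alter which row is labelled $+$, so Theorem \ref{main} does apply to $\vec\pi$.

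Given this, (\ref{iotaiso}) yields an isomorphism $\iota_0 : Y_{1|n}^\ell(\sigma) \xrightarrow{\sim} Y_{1|n}^\ell(\vec\sigma)$ whose action on the parabolic generators with respect to a shape $\mu$ admissible for both $\sigma$ and $\vec\sigma$ is given by (\ref{iotadefpara}); the trivial shape $\mu=(1^{n+1})$ is always admissible, so such a $\mu$ certainly exists. By Theorem \ref{main}, these parabolic generators are identified with the distinguished elements $D_{a;i,j}^{(r)}, E_{a;i,j}^{(r)}, F_{a;i,j}^{(r)}$ of $\W_\pi$ and $\W_{\vec\pi}$ constructed in \textsection \ref{Invariants}. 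Composing the three maps produces the desired isomorphism $\iota : \W_\pi \xrightarrow{\sim} \W_{\vec\pi}$, and by construction its action on the parabolic generators is precisely the formula (\ref{iotadef}).

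There is no substantive obstacle here: the corollary is a formal consequence of the main theorem together with the elementary combinatorial observation that horizontally shifting rows of a pyramid leaves both the base length $\ell$ and each sum $s_{i,j} + s_{j,i}$ unchanged. The only real care required is the bookkeeping of parities and indices when translating between the pyramid conventions of \textsection 7 and the Yangian conventions of \textsection 2, but this is already encoded in the formulae (\ref{iotadef}) and (\ref{iotadefpara}).
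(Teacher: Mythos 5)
Your proposal is correct and follows the paper's own proof exactly: the paper also obtains $\iota$ as the composite $\W_\pi \xleftarrow{\sim} Y_{1|n}^\ell(\sigma) \xrightarrow{\iota} Y_{1|n}^\ell(\vec\sigma) \xrightarrow{\sim} \W_{\vec\pi}$ using Theorem~\ref{main} twice together with (\ref{iotaiso}). Your additional verification that horizontal shifts preserve $\ell$, each $s_{i,i+1}+s_{i+1,i}$, and hypothesis (\ref{crucialhypo}) is a welcome bit of bookkeeping the paper leaves implicit, but it is not a different argument.
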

\begin{proof}
Follows from Theorem \ref{main} and (\ref{iotaiso}).
\end{proof}

\begin{remark}
In the classical (non-super) case, the definition of a finite $W$-algebra is independent of the choices of the good $\mathbb{Z}$-gradings \cite{BG}. Under certain mild assumption, which is satisfied in our current case, such a phenomenon is generalized to the Lie superalgebra case \cite[Theorem~3.7, Remark~3.11]{Zh}, and our Corollary \ref{ezg} is included as a special case.
\end{remark}

\subsection*{Acknowledgements}
Parts of the results in this article were established when the author was a graduate student in University of Virginia. The author is grateful to his adviser Weiqiang Wang for his patient guidance. The author would also like to thank the anonymous reviewer for his/her valuable comments and suggestions to improve the quality of this article. The author is supported by post-doctorial fellowship of Institution of Mathematics, Academia Sinica, Taipei, Taiwan.

\end{document}